\documentclass[]{interact}

\usepackage{epstopdf}% To incorporate .eps illustrations using PDFLaTeX, etc.
\usepackage[caption=false]{subfig}% Support for small, `sub' figures and tables

\usepackage[numbers,sort&compress]{natbib}% Citation support using natbib.sty
\bibpunct[, ]{[}{]}{,}{n}{,}{,}% Citation support using natbib.sty
% Bibliography support using natbib.sty
\makeatletter% @ becomes a letter
\def\NAT@def@citea{\def\@citea{\NAT@separator}}% Suppress spaces between citations using natbib.sty
\makeatother% @ becomes a symbol again

\usepackage{amsmath, amssymb, amsfonts, latexsym, xcolor, mathrsfs, longtable, enumitem,xypic,hyperref}

\theoremstyle{plain}% Theorem-like structures provided by amsthm.sty
\newtheorem{theorem}{Theorem}[section]
\newtheorem{lemma}{Lemma}[section]
\newtheorem{corollary}{Corollary}[section]
\newtheorem{proposition}{Proposition}[section]

\theoremstyle{definition}
\newtheorem{definition}{Definition}[section]

\theoremstyle{remark}
\newtheorem{remark}{Remark}[section]

\def\RR{\mathbb{R}}
\def\P{\mathscr{P}}
\def\L{\mathcal{L}}
\def\A{\mathcal{A}}

\def\P{\mathcal{P}}
\def\V{\mathcal{V}}

\def\la{\langle}
\def\ra{\rangle}
\def\raa{\rightarrow}

\DeclareMathOperator{\inte}{int}

\DeclareMathOperator{\lin}{lin}
\DeclareMathOperator{\wsup}{WSup}
\DeclareMathOperator{\winf}{WInf}
\DeclareMathOperator{\wmax}{WMax}
\DeclareMathOperator{\wmin}{WMin}
\DeclareMathOperator{\cl}{cl}
\DeclareMathOperator{\bd}{bd}
\DeclareMathOperator{\co}{co}
\DeclareMathOperator{\cone}{cone}
\DeclareMathOperator{\aff}{aff}
\DeclareMathOperator{\dom}{dom}

\DeclareMathOperator{\epi}{epi}

\DeclareMathOperator{\exepi}{\mathfrak{E} {\rm{pi}}}

\def\nplus{\boxplus}

\allowdisplaybreaks

\begin{document}

\title{New Representations of Epigraphs of Conjugate Mappings and Lagrange, Fenchel-Lagrange Duality   for Vector Optimization Problems }

\author{
	\name{
		N. Dinh\textsuperscript{a}\textsuperscript{b}\thanks{CONTACT N. Dinh. Email:  ndinh@hcmiu.edu.vn}
		\and D.H. Long\textsuperscript{c}\textsuperscript{d}
			}
			\affil{
		\textsuperscript{a}Department of Mathematics, International University Vietnam National University-Ho Chi Minh City, Ho Chi Minh City, Vietnam,  e-mail: ndinh@hcmiu.edu.vn;  
		\textsuperscript{b}Vietnam National University, Ho Chi Minh City,  Vietnam;	
		\textsuperscript{c}VNUHCM - University of Science, District 5, Ho Chi Minh city, Vietnam;
		\textsuperscript{d} Tien Giang University, Tien Giang town, Vietnam; e-mail: danghailong@tgu.edu.vn 
		}
}

\maketitle

\vskip-1cm

\begin{abstract}   In this paper we concern the vector problem of the model: 
\begin{align*}
({\rm VP})\quad\qquad &\winf \{F(x): x\in C,\; G(x)\in -S\}.
\end{align*}
where  $X, Y, Z$ are locally convex Hausdorff topological vector spaces,   $F\colon X\rightarrow Y\cup\{+\infty_{Y}\}$ and $%
G\colon X\rightarrow Z\cup\{+\infty_{Z}\}$ are proper mappings, $C$ is a
nonempty convex subset of $X$, and $S$ is a non-empty closed, convex   cone in $Z$. Several new presentations of epigraphs of composite 
conjugate mappings associated to  (VP) are established under variant qualifying conditions. The significance  of these representations is twofold: Firstly, they play a key role in establish new kinds of vector Farkas lemmas which serve as  tools in the study of vector optimization problems; secondly, they pay the way  to define   Lagrange dual problem and two new kinds of Fenchel-Lagrange dual problems for the vector problem (VP). 
 Strong and stable strong duality results corresponding to these  three mentioned dual problems of (VP) are established with the help of new Farkas-type  results just obtained from the representations.  
It is shown  that in the special case where $Y = \mathbb{R}$, the Lagrange  and Fenchel-Lagrange dual problems   for (VP), go back to  Lagrange dual problem,  and Fenchel-Lagrange dual problems for scalar problems, and the resulting  duality results  cover,  and in some setting, extend the corresponding  ones for scalar problems in the literature. 

\end{abstract} 

\begin{amscode}
49N15;  90C25; 90C29; 90C31; 90C46; 90C48
\end{amscode}

\begin{keywords} Vector inequalities;    (stable) vector Farkas lemmas; qualification conditions;
 extended epigraphs of conjugate mappings; Lagrange and Fenchel-Lagrange duality for vector optimization problems.

  \end{keywords}

\section{Introduction}
\label{sect1}

 We consider a  vector optimization problem of the form:  
\begin{align*}
({\rm VP})\quad\qquad\qquad &\winf \{F(x): x\in C,\; G(x)\in -S\}.
\end{align*}
where  $X, Y, Z$ are locally convex Hausdorff topological vector spaces,   $F\colon X\rightarrow Y\cup\{+\infty_{Y}\}$ and $%
G\colon X\rightarrow Z\cup\{+\infty_{Z}\}$ are proper mappings, $C$ is a
nonempty subset of $X$, and $S$ is a non-empty closed, convex   cone in $Z$.
Let  $A:=C\cap G^{-1}(-S)$ and assume along this paper that $A\cap \dom F\neq\emptyset$.  {Throughout this paper, we often say that the  triple $(F; G, C)$ defined the (VP). }

In the special case where $Y = \RR$, the problem {(VP)} reduces to the 
scalar problem 
\begin{align*}
({\rm P})\quad\qquad\qquad \inf \{f(x): x\in C,\; G(x)\in -S\}, 
\end{align*}
where $f : X \rightarrow \overline{\RR} := \RR \cup \{\pm \infty\}$.

Several dual problems of the (VP) are defined in the literature (see \cite{B12,BGW097nw,BGW09,CDLP20,DGLL17,DL2017,DL-ACTA-2020,  GP14,  Jahn-vector, Khan-Tammer-Zali, LT07,    Tanino92, Z83}). 
However, many among the dual problems in these works when specified  to the special case where $Y = \RR$ result to only Lagrangian dual problem of (P):  
\[ ({\rm D}_1)\quad \sup_{\lambda\in S^+}  \hskip0.6em\inf_{x\in C} (f+\lambda G)(x),  \]   
and consequently, strong duality results also go back to  Lagrange strong duality for (P). In \cite[pages 338, 373]{BGW09}, the author introduced some kind of dual problems for set-valued   optimization problems that can reduce (when $Y = \mathbb{R}$)  to some kind of Fenchel dual problem for  (P).
However,  just a  few  dual problems for  (VP) in the literature that can result to Fenchel or   Fenchel-Lagrange dual problems for  (P)
 {(see  \cite{Bot2010,BGWMIA09,DNV-08,DVN-08} and  reference therein) as }
 \begin{align*}
({\rm D}_2)\quad & \sup_{\substack{x^\ast \in X^\ast\\\lambda\in S^+}} \hskip0.6em [-f^*(x^\ast)-(i_C+\lambda G)^*(-x^\ast)], \\
({\rm D}_3)\quad  & \sup_{\substack{x^\ast,u^\ast\in X^\ast\\\lambda\in S^+}}  \!\! [-f^*(x^\ast)-{i^\ast_C}(u^\ast)- (\lambda G)^*(-x^\ast-u^\ast)], 
\end{align*}
where $i_C$ is the indicator function of $C$.  The difficulty is that the representation of epigraphs of conjugate mappings (in the case of vector problems being multivalued functions) becomes 
very complicated and it is so difficult to get some similar forms as for the scalar problem (P).

The aim of this paper is to introduce a way  to overcome this difficulty to establish some new ways of representations of epigraphs of conjugate mappings
that pay a general way  to define  some new dual problems for (VP) (also called Lagrange and Fenchel-Lagrange dual problems for (VP)) which extend the dual problems 
(${\rm D}_2$) and (${\rm D}_3$)    to vector setting. In other words, such Lagrange and Fenchel-Lagrange dual problems for (VP) in the case where $Y = \mathbb{R}$ go back to the corresponding dual problems (${\rm D}_1$), (${\rm D}_2$),  and (${\rm D}_3$)  for (P).

To overcome the    difficulty mentioned above in getting representations of epigraphs of conjugate mappings, we need some new notions on the order between subsets of $Y$ generated by $K$, denoted by $ \preccurlyeq_K $, and operations of subsets in product spaces and some key results  (called basic lemmas) on these subsets. 
Concretely, we use the weak order in $Y$ generated by a closed and convex cones $K$. Such kind of order are used frequently in the literature such as           {\cite{BGW09,CDLP20,DGLL17,DL-ACTA-2020,DL2017,Tanino92}.}  We introduce the space of nonempty subsets of $Y$, namely, $\left(\mathcal{P}_p(Y)^\bullet, \preccurlyeq_K\right) $,  {the} structure $(\mathcal{P}_p(Y)^\infty, \preccurlyeq_K, {\uplus})$, and the notion of ``extended epigarph" of conjugate mappings and the sum (denoted by $\mathfrak{E}{\rm pi }  F^*  $ and  $  \nplus$, respectively) of these extended epigraphs (e.g.,    
$\mathfrak{E}{\rm pi }  F^*  \nplus   \mathfrak{E}{\rm pi }  G^* $). 
Some basic properties of the sum, called  lemmas  are established (see Section 3). These form an important  cornerstone  for the main results of this paper.   

The paper is organized as follows: Section 2 give some preliminaries and some first results, where we recall the conjugate of a vector-valued functions and some properties of its epigraph. 
  In Section 3 we introduce some new notions with some basic lemmas  which can be considered as  key tools for the results in the rest of the paper.  Here we firstly introduce   ordered Space  $\left(\mathcal{P}_p(Y)^\bullet, \preccurlyeq_K\right) $ of nonempty subsets of $Y$   and the Structure $(\mathcal{P}_p(Y)^\infty, \preccurlyeq_K, {\uplus})$.  
  Next, we introduce the notion \emph{extended epigraph} of conjugate of a mapping  $F$, denoted by $\mathfrak{E}{\rm pi}  F^*$, and  the $\boxplus$-sum  of these extended epigraphs of conjugate mappings with some basic properties. 
Two basic lemmas (Lemmas \ref{exepi} and \ref{thm:3.1aa}) are established at the end of this section which serve as cornerstones  for the study  in the paper. Section 4  introduces certain regularity conditions for (VP),  under which, several presentations of epigraphs of conjugate mappings are established which serve as important cornerstones for the rest of the paper. The significance  of these representations is twofold: First, they play a key role in establish new kinds of vector Farkas lemmas which are important  in the study of vector optimization problems; secondly, they pay the way  to define   Lagrange and new kinds of Fenchel-Lagrange dual problems for the vector problem (VP) in Section 6.    In Section 5,  several versions of vector Farkas results for general systems and convex systems are established.  Section 6  introduces Lagrange dual problem and two forms of Fenchel-Lagrange dual problems for vector optimization problem (VP). Strong and stable strong duality  results for (VP) corresponding to  these   dual problems  are established. It is worth noticing that   in the special case where $Y = \mathbb{R}$, these dual problems collapse to the dual problems $({\rm D}_1)$, 
$({\rm D}_2 )$, and $ ({\rm D}_2)$ mentioned above, and,  better still, the duality results just obtained, go back   (cover) and extend  many the Lagrange duality and Fenchel-Lagrange duality  results for scalar problems in the literature (see, e.g.,  \cite{Bot2010,BGWMIA09,DNV-08,DVN-08}).

 \section{Preliminaries, Notations and First Results }
\label{section2}

Let $X,Y,Z$ be locally  convex  Hausdorff  topological vector spaces  with their topological dual spaces denoted by $X^{\ast },Y^{\ast }$ and $Z^{\ast }$, respectively. The only topology we consider on
dual spaces is the weak*-topology.
For a set $U\subset X$, we denote by $\inte U,\ \cl U,\ \bd U,\ \co U,\ \lin U,\ \aff U$, {and} $\cone U$ the \emph{interior},  the \emph{closure}, the \emph{boundary}, the \emph{convex hull}, the \emph{linear hull}, the \emph{affine hull}, and the \emph{cone hull} of $U$, respectively. 
%The \emph{algebraic interior} (or \emph{core}) of $U$, \emph{intrinsic core} of $U$, 
%\emph{strong quasi relative interior} of $U$, \emph{relative interior} of $U$ are defined respectively as \cite{Bot2010,Z02}:  
%\begin{eqnarray*} 
%\core U&:=&\{x\in X: \forall x'\in X,\; \exists \delta>0 \textrm{ such \ that \ } x+\lambda x'\in U,\; \forall \lambda \in [0,\delta]\},\\
%\icr U&:=&\{x\in X:\forall x'\in \aff U,\; \exists \delta>0  \textrm{ such \ that \ }  x+\lambda x'\in U,\; \forall \lambda \in [0,\delta]\},\\
%\sqri U&:=& \{x\in U: \cone (U-x) \textrm{ is a closed linear subspace}\},\\
%\ri U&:=& \{x\in \aff U: \exists V,  \textrm{ the neighborhood of } x  \textrm{ such \ that \ }  (V\cap \aff U)\subset U\}. 
%\end{eqnarray*} 
% It is well-known that $\core U \subset \sqri U \subset \icr U \subset U$ and when $X$ is a finite dimension space, one has $\icr U =\sqri U = \ri U$.  
For each $x\in X$, $\mathcal{N}_X(x)$ denotes the collection of all neighborhoods of $x$ in $X$. 
Assume that $W$ is a topological subspace of $X$. For $A\subset W$, denote by $\inte_{W} A$ the interior of $A$ w.r.t. the topology induced in  $W$.  
%Given two subsets $A$ and $B$ of a topological space,
%one says that $A$ is \emph{closed regarding} $B$ if $B\cap \cl A=B\cap A$ (see, e.g., \cite[p. 56]{Bot2010}).
%-----------------
%\subsection{Preliminaries} 
%\subsection{Weak Ordering  Generated by  a Convex Cone with Nonempty Interior}
Let $K$ be a proper,  closed, and convex cone in $Y$ with nonempty interior, i.e.,  $\inte K\neq \emptyset $.  
It is worth observing that 
\begin{equation}   \label{int-plus}   
K+\inte K=\inte K.  %\label{M2}
\end{equation}  

 {\it Weak Ordering  Generated by  a Convex Cone: Weak Infima and Weak Suprema. }    
We define  a \emph{weak ordering} in $Y$ generated by $K$ as follows: for all $y_1, y_2 \in Y$, 
\begin{equation}
y_{1}<_{K}y_{2}\; \Longleftrightarrow \; y_{1}-y_{2}\in -\inte K. 
\label{eq_weak_ordering_strict}
\end{equation}
In $Y$ we sometimes   also consider an usual   \emph{ ordering} generated by the cone $K$, $\leqq_K$, which is defined by $y_1\leqq_Ky_2$ if and only if $y_1-y_2\in -K, $ for $y_1, y_2 \in Y$.

%\begin{lemma} {\rm \cite[Lemma 2.1(i)]{CDLP20} }   \label{pro_1a} 
%The following assertions hold: 
%\begin{itemize}
%\item[$\rm(i)$] 
%For all $y,y'\in Y$ and $k_0\in\inte K$, there is $\mu>0$ 
%such that  $y -\mu k_0<_K  y'$.
%\item[$\rm(ii)$] For all  $\{y_i\}_{i=1}^n\subset Y$, there exists $y_0, y'_0\in Y$ such that 
%$y'_0<_K y_i<_K y_0$ for all $i=1,\ldots, n.$
%\end{itemize}
%\end{lemma}

We enlarge $Y$ by attaching a \emph{greatest element} $+\infty _{Y}$ and a 
\emph{smallest element} $-\infty _{Y}$ w.r.t. $<_{K}$, which do not
belong to $Y$, and we denote $Y^{\bullet }:=Y\cup \{-\infty _{Y},+\infty
_{Y}\}$. We assume by convention that  $-\infty _{Y}<_{K}y<_{K}+\infty _{Y}$ for any $y\in Y
$ and, for  $M\subset Y$,   
\begin{equation} \label{2.4}
\begin{gathered}
-\!(\!+\!\infty _{Y}\!)=-\!\infty _{Y}, \qquad
-\!(\!-\!\infty _{Y})=+\!\infty _{Y}, \\
(\!+\!\infty _{Y}\!)\!+\!y=y\!+\!(\!+\!\infty _{Y}\!)=+\!\infty _{Y},\quad \forall y\!\in\! Y\!\cup\!
\{\!+\!\infty _{Y}\!\},  \\
(\!-\!\infty _{Y}\!)\!+\!y=y\!+\!(\!-\!\infty _{Y}\!)=-\!\infty _{Y},\quad \forall y\!\in\! Y\!\cup\!
\{\!-\!\infty _{Y}\!\} ,    \\ 
M\!+\!\{\!-\!\infty _{Y}\!\}=\{\!-\!\infty _{Y}\!\}\!+\!M=\{\!-\!\infty _{Y}\!\},\ \ 
 M\!+\!\{\!+\!\infty_{Y}\!\}=\{\!+\!\infty _{Y}\!\}\!+\!M=\{\!+\!\infty _{Y}\!\}. 
 \end{gathered}
\end{equation}
The sums $(-\infty _{Y})+(+\infty _{Y})$ and $(+\infty _{Y})+(-\infty _{Y})$
are not considered in this paper.

%-----------------------------------

{The following notions are the key ones of   the paper.}

\begin{definition}{\rm (\cite[Definition 7.4.1]{BGW09}, \cite{Tanino92})} \label{def1} Let $M\subset Y^{\bullet }$. 

$\left(\rm a\right) $ An element $\bar{v}\in Y^{\bullet }$
is said to be a \emph{weakly infimal element} of $M$ if for all $v\in M$ we
have $v\not<_{K}\bar{v}$ and if for any $\tilde{v}\in Y^{\bullet }$ such
that $\bar{v}<_{K}\tilde{v}$, then there exists some $v\in M$ satisfying $%
v<_{K}\tilde{v}$. The set of all weakly infimal elements of $M$ is denoted
by $\winf M$ and is called the \emph{weak infimum} of $M$.

$\left(\rm b\right) $ An element $\bar{v}\in Y^{\bullet }$ is said
to be a \emph{weakly supremal element} of $M$ if for all $v\in M$ we have $%
\bar{v}\not<_{K}v$ and if for any $\tilde{v}\in Y^{\bullet }$ such that $%
\tilde{v}<_{K}\bar{v}$, then there exists some $v\in M$ satisfying $\tilde{v}%
<_{K}v$. The set of all supremal elements of $M$ is denoted by $\wsup M$ and
is called the \emph{weak supremum} of $M$.

$\left(\rm c\right) $ The \emph{weak minimum} of $M$ is the set $%
\wmin M=M\cap \winf M$ and its elements are the \emph{weakly minimal elements%
} of $M$.

$\left(\rm d\right) $   The \emph{weak maximum} of $M$ is the set $\wmax M=M\cap \wsup M$ and its elements are the \emph{weakly maximal elements} of $M$.
\end{definition}

%The  next properties (i)-(vi) related to the sets weak infimum, weak minimum, weak supremum, weak maximum of a subset $M$ of $Y^\bullet$ are  traced out from  \cite{BGW09,DGLL17,DL2017}.   

\begin{proposition}
\label{pro_decomp}
Let $\emptyset \ne M\subset Y^\bullet$. One has:
\begin{itemize}
\item[$(\rm i)$] $\wsup M\ne \{+\infty_Y\}$ {if and only if $ Y\setminus  (M-\inte K)  \ne \emptyset$.}

\item[$(\rm ii)$] For all $ y\in Y$, $\wsup (y+M)=y+\wsup M$. 
%\item[$(\rm iv)$] $\wsup (M-\bd K)=\wsup M$.
\end{itemize}
Assume further that $M\subset Y$ and  $\wsup M\ne \{+\infty_Y\}$ then it holds:
\begin{itemize}
\item[$(\rm iii)$] $\wsup M-\inte K=M-\inte K$.

\item[$(\rm iv)$]  The following decomposition\footnote{Here, by the term ``decomposition" we mean the sets  in the right-hand side of the equality are disjoint.}  of $Y$ holds 
  $$Y=(M-\inte K)\cup \wsup M\cup (\wsup M + \inte K).$$ 

\item[$(\rm v)$] $\wsup M=\cl(M-\inte K)\setminus (M-\inte K)$.

%The following proposition follows from \cite[Proposition 7.4.3]{BGW09}.

\item[$\rm(vi)$]   $\wsup (\wsup M+\wsup N)= \wsup  (M+\wsup N)=\wsup (M+N).$ 

\item[$(\rm vii)$] If $0_Y\in N\subset -K$ then $\wsup (M+N)=\wsup M$. \\
In particular, one has $\wsup (M-K)=\wsup (M-\bd K)=\wsup M$. 
\end{itemize}
\end{proposition}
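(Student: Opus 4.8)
The plan is to reduce every assertion to properties of the single set $D:=M-\inte K$, so the first step is to translate Definition \ref{def1}(b) into set language. Unwinding $\bar v\not<_K v$ and $\tilde v<_K\bar v$ through \eqref{eq_weak_ordering_strict}, one sees that for $\bar v\in Y$,
\[ \bar v\in\wsup M\iff \bar v\notin M-\inte K\ \text{and}\ \bar v-\inte K\subseteq M-\inte K . \]
I would also record, from \eqref{int-plus} (whence $\inte K+\inte K=\inte K$), that $D$ is open in $Y$ and \emph{downward closed}, $D-\inte K=D$, so that $Y\setminus D$ is \emph{upward closed}, $(Y\setminus D)+\inte K\subseteq Y\setminus D$. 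Part (ii) is then immediate: $<_K$ is translation invariant and $(y+M)-\inte K=y+D$, so the two displayed conditions for $\bar v-y$ relative to $M$ are exactly those for $\bar v$ relative to $y+M$. For (i), the element $+\infty_Y$ is weakly supremal precisely when every $\tilde v\in Y$ lies in $M-\inte K$ (condition one being automatic), and when this happens no finite $\bar v\in D$ can satisfy the first condition, so $\wsup M=\{+\infty_Y\}$; thus $\wsup M=\{+\infty_Y\}\iff Y\subseteq M-\inte K$, the negation of the right-hand side (the degenerate cases $M\subseteq\{\pm\infty_Y\}$ being checked directly). From here on $\wsup M\ne\{+\infty_Y\}$, and since $M\subseteq Y$ is nonempty one checks $\pm\infty_Y\notin\wsup M$, i.e.\ $\wsup M\subseteq Y$.

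The topological heart is the identity
\[ \cl D=\{\,y\in Y:\ y-\inte K\subseteq D\,\}, \]
valid for $D$ open, nonempty and downward closed. For ``$\supseteq$'' I use $0\in\cl(\inte K)$ to approximate $y$ by $y-k_n$ with $k_n\in\inte K$, $k_n\to0$; for ``$\subseteq$'', given $y\in\cl D$ and $k\in\inte K$, I pick a balanced zero-neighborhood $U$ with $k+U\subseteq\inte K$, a point $d\in D\cap(y+U)$, and write $y-k=d-(k+u)\in D-\inte K=D$. Granting this, part (v) is just the reformulation read as $\wsup M=(Y\setminus D)\cap\cl D=\cl D\setminus D$; moreover it gives $\cl D-\inte K=D$, whence the inclusion $\wsup M-\inte K\subseteq\cl D-\inte K=D$ needed for (iii).

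The reverse inclusion $D\subseteq\wsup M-\inte K$ in (iii) is the step I expect to be the main obstacle, and the same device handles the covering in (iv). Here I use full-dimensionality of $K$: since $\inte K\ne\emptyset$ one has $\inte K-\inte K=Y$. Fixing $d\in D$ and any $y_0\in Y\setminus D$ (nonempty by (i)), I write $y_0-d=k_1-k_2$ with $k_i\in\inte K$, so that $d+k_1=y_0+k_2\in Y\setminus D$; thus the connected open cone $d+\inte K$ meets both $D$ (points near $d$) and $Y\setminus D$, hence meets $\bd D=\wsup M$, giving $d\in\wsup M-\inte K$. The symmetric argument applied to $p-\inte K$ for $p\in Y\setminus\cl D$ shows $Y\setminus\cl D\subseteq\wsup M+\inte K$; combined with $\cl D=D\cup\wsup M$ this yields the covering $Y=D\cup\wsup M\cup(\wsup M+\inte K)$ of (iv). Disjointness is routine: $D$ open gives $D\cap\bd D=\emptyset$ and $\bd D+\inte K\subseteq Y\setminus D$ is disjoint from $D$, while $w_2+k=w_1$ with $w_i\in\bd D$, $k\in\inte K$ would force (by the identity above) $w_2=w_1-k\in D$, a contradiction.

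Finally, (v) yields the principle that $\wsup$ sees $M$ only through $D$: if $A-\inte K=B-\inte K$ then $\wsup A=\wsup B$ (matching the value $\{+\infty_Y\}$ via (i) when $D$ fills $Y$). For (vi) I compute with (iii),
\[ (M+\wsup N)-\inte K=M+(N-\inte K)=(M+N)-\inte K, \]
and likewise $(\wsup M+\wsup N)-\inte K=(M+N)-\inte K$, so all three weak suprema agree. For (vii), $0_Y\in N$ gives $M-\inte K\subseteq(M+N)-\inte K$, while $N\subseteq-K$ with \eqref{int-plus} gives $(M+N)-\inte K\subseteq M-K-\inte K=M-\inte K$; hence the $D$-sets coincide and $\wsup(M+N)=\wsup M$, the two displayed special cases being $N=-K$ and $N=-\bd K$.
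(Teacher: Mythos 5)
Your proposal is correct, but it takes a genuinely different route from the paper: for items (i)--(vi) the paper gives no argument at all, simply quoting them from \cite{DGLL17}, \cite{DL2017}, \cite{BGW09} and \cite{Tanino92}, and only proves (vii) in-line (by observing $M+N-\inte K=M-\inte K$ and invoking (v)) --- your treatment of (vii) coincides with that one. What you add is a self-contained proof of the quoted parts, organized around the single set $D=M-\inte K$: the translation of Definition \ref{def1}(b) into ``$\bar v\notin D$ and $\bar v-\inte K\subseteq D$'', the identity $\cl D=\{y: y-\inte K\subseteq D\}$ for open downward-closed $D$, and a connectedness argument (the convex set $d+\inte K$ meeting both $D$ and $Y\setminus D$ must meet $\bd D=\wsup M$, using $\inte K-\inte K=Y$) to get the nontrivial inclusions in (iii) and (iv). This buys a uniform, reference-free derivation in which (v)--(vii) reduce to the principle that $\wsup$ depends on $M$ only through $M-\inte K$; the cost is that you must be slightly more careful than the paper about the conventions at $\pm\infty_Y$ (your part (i) argument, like the cited statement itself, really presupposes $M\subseteq Y$ --- e.g.\ $M=\{+\infty_Y\}$ is a boundary case of the stated equivalence --- and your computation in (vi) tacitly uses (iii) for $N$, i.e.\ $\wsup N\ne\{+\infty_Y\}$, a hypothesis the proposition omits). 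These are defects of the statement as printed rather than of your argument, and your proof is sound on the domain where the paper actually uses the result.
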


\begin{proof} The assertions  (i) - (v) are quoted  from  \cite{DGLL17} and \cite{DL2017}     while    
  (vi)  from \cite[Proposition 7.4.3]{BGW09} (see also  \cite[Proposition 2.6]{Tanino92}). 

(vii)  If $0_Y\in N\subset -K$ then  it is easy to check that $M+N-\inte K=M-\inte K$. Taking (v) into account, we get 
$\wsup (M+N)=\wsup M$.  Moreover, as    $0_Y\in -\bd K\subset -K$, taking $N=  -\bd K$ and $N= -K$  successively in the previous equality, one gets  $\wsup (M-K)=\wsup (M-\bd K)=\wsup M$.  
\end{proof}

\begin{proposition}
\label{pro_nwwew}
Let $\emptyset \ne M\subset Y^\bullet$.  Then, only one of three following cases is possible: $\wsup M= \{+\infty_Y\}$, $\wsup M= \{-\infty_Y\}$, and $\emptyset\ne\wsup M\subset Y$.
\end{proposition}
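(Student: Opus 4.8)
The plan is to determine $\wsup M$ by a short case analysis on how the symbols $\pm\infty_Y$ sit inside $M$, pushing everything to the purely finite situation already governed by Proposition \ref{pro_decomp}. First I would record a harmless reduction: adjoining or deleting $-\infty_Y$ never affects the weak supremum as long as $M$ has another element. Indeed, by the conventions $-\infty_Y<_K v$ for every $v\in Y\cup\{+\infty_Y\}$ while nothing is $<_K -\infty_Y$, the element $-\infty_Y$ can neither violate clause (1) of Definition \ref{def1}(b) nor serve as the witness required by clause (2); hence $\wsup M=\wsup\bigl(M\setminus\{-\infty_Y\}\bigr)$ whenever $M\setminus\{-\infty_Y\}\ne\emptyset$. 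Outside one degenerate case this lets me assume $-\infty_Y\notin M$.

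Next I would dispatch the two extreme configurations directly from the definition. If $M=\{-\infty_Y\}$, then $\bar v=-\infty_Y$ satisfies clause (1) and clause (2) vacuously, while any $y\in Y$ fails clause (2) (since $-\infty_Y<_K y$ but no element of $M$ strictly dominates $-\infty_Y$); thus $\wsup M=\{-\infty_Y\}$. If instead $+\infty_Y\in M$, clause (1) forces $\bar v=+\infty_Y$, because $+\infty_Y$ is the only element that is not $<_K +\infty_Y$, and one checks immediately that $+\infty_Y$ meets both clauses; thus $\wsup M=\{+\infty_Y\}$.

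There then remains the case $\emptyset\ne M\subseteq Y$, where Proposition \ref{pro_decomp} is available. By item (i), either $Y=M-\inte K$, whence $\wsup M=\{+\infty_Y\}$; or $Y\setminus(M-\inte K)\ne\emptyset$ and $\wsup M\ne\{+\infty_Y\}$. In the latter subcase item (v) yields $\wsup M=\cl(M-\inte K)\setminus(M-\inte K)$, a subset of $Y$, so at once $\wsup M\subseteq Y$ and $\wsup M\ne\{-\infty_Y\}$. The one genuinely topological point is nonemptiness: $M-\inte K=\bigcup_{v\in M}(v-\inte K)$ is open, being a union of translates of the open set $-\inte K$, and in this subcase it is a nonempty proper subset of the connected space $Y$, so its boundary $\cl(M-\inte K)\setminus(M-\inte K)$ is nonempty. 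Hence $\emptyset\ne\wsup M\subseteq Y$.

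Finally I would note that the three displayed alternatives are mutually exclusive as sets, since $+\infty_Y,-\infty_Y\notin Y$, while the case analysis always lands in exactly one of them; this gives the trichotomy. I expect the nonemptiness of the boundary, that is, the appeal to connectedness of $Y$ to ensure a nonempty proper open set has nonempty boundary, to be the main obstacle; all other steps are bookkeeping with the arithmetic conventions on $Y^\bullet$ together with direct invocations of Proposition \ref{pro_decomp}.
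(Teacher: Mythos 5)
Your proof is correct and follows essentially the same skeleton as the paper's: the same preliminary reduction (deleting $-\infty_Y$ when $M$ has another element, then settling the cases $M=\{-\infty_Y\}$ and $+\infty_Y\in M$ directly from Definition \ref{def1}(b)), followed by an appeal to Proposition \ref{pro_decomp} once $\emptyset\ne M\subset Y$. The one point where you diverge is the nonemptiness of $\wsup M$. You get it from item (v) together with the observation that $M-\inte K$ is a nonempty proper open subset of the connected space $Y$ and therefore has nonempty boundary; the paper instead invokes item (iii), namely $\wsup M-\inte K=M-\inte K\ne\emptyset$, which already forces $\wsup M\ne\emptyset$ with no further topological input. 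Both arguments are valid (a topological vector space is path-connected, so your connectedness appeal is sound), but the paper's route is slightly more economical, staying entirely inside the quoted proposition, while yours has the merit of making explicit exactly where the topology of $Y$ enters.
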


\begin{proof}
Assume that $\wsup M\ne \{+\infty_Y\}$ and $\wsup M\ne \{-\infty_Y\}$.
It follows from  Definition \ref{def1} (b) that if $M\ni +\infty_Y$ then $\wsup M=\{+\infty_Y\}$  and that  $\wsup \{-\infty_Y\}=\{-\infty_Y\}$.  So, we can assume that $M\not\ni +\infty_Y$ and $M\ne\{-\infty_Y\}$, which yields $\emptyset\ne M\setminus\{-\infty_Y\}\subset Y$ (recall that $M\ne\emptyset$). 
On the other hand, it follows again from Definition \ref{def1} (b) that $\wsup M=\wsup M\setminus\{-\infty_Y\}$. So, replacing $M$ by $M\setminus\{-\infty_Y\}$ if necessary, we can assume that $\emptyset \ne M\subset Y$. 

Apply Proposition \ref{pro_decomp}(iii) and (v) (recall that $\wsup M\ne \{+\infty_Y\}$), one gets $\wsup M\ne\emptyset$ and   $\wsup M\subset Y$.
\end{proof}

\begin{remark}
\label{rem_1hh}
It is worth noting that $\winf M=-\wsup(-M)$ for all $M\subset Y^\bullet$. So, Propositions \ref{pro_decomp} and \ref{pro_nwwew}  hold true when  $\wsup$, $+\infty_Y$, $K$, and $\inte K$ are replaced by $\winf$, $-\infty_Y$, $-K$, and $-\inte K$, respectively.
\end{remark}

%\subsection{Conjugate Mappings  of Vector-Valued Functions} 

 {\it  Mappings and  Cone of Positive Operators.} 
Let $F \colon X\rightarrow Y^{\bullet }$ be a mapping. The \emph{domain},   $ \dom F$,  and the \emph{$K$-epigraph} of $F $, $\epi_{K}F $,   are defined  respectively by 
\begin{align*}
\dom F &:=\{x\in X:F (x)\neq +\infty _{Y}\}, \ \ 
\epi_{K}F: =\{(x,y)\in X\times Y:  F (x) \leqq_K  y  \}.
\end{align*}
 $F $ is  said to be \emph{proper} if $\dom F \neq \emptyset $ and $-\infty
_{Y}\notin F (X)$. {It is said to be \emph{$K$-convex} (resp., \emph{$K$-epi closed}) if $\epi_K F$ is a convex subset of $X \times Y$ (resp., $\epi_K F$ is a closed subset of the product space $X\times Y$, \cite{Bot2010}, \cite[Definition 5.1]{Luc}).}
  
The concept $K$-epi closed extends the concept lower semicontinuous (lsc, briefly) of a real-valued function.  The mapping  $F$ is said to be  \emph{positively $K$-lsc}  if $y^*\circ F$ is lsc for all $y^*\in K^+$ (see \cite{Bol98}, \cite[Definition 2.16]{Bol01})\footnote{This notion was used   in    \cite{Bot2010} and  \cite{JSDL05}  as  ``star $K$-lower semicontinuous''.}. According  to  \cite[Theorem 5.9]{Luc}, every positively $K$-lsc mapping is $K$-epi closed but the converse is not true. Moreover,   when $Y=\RR$, three notions lsc, positively $\RR_+$-lsc, and $\RR_+$-epi closed coincide with each other.

Denote by $\mathcal{L}(X,Y)$ the space of all
 continuous linear mappings  from $X$ to $Y$ equipped with   the   \emph{topology of point-wise convergence}, i.e.,   if  $(L_{i})_{i\in I}\subset \mathcal{L}(X,Y)$ and $L\in \mathcal{L}(X,Y)$, $L_{i}\rightarrow L$ in $\mathcal{L}(X,Y) $
means  $L_{i}(x)\rightarrow L(x)$ in $Y$ for all $x\in X$. The zero element of $\mathcal{L}(X,Y)$ is  $0_{\mathcal{L}}$.

Now let $S$ be a non-empty convex cone in $Z$. Recall that the {\it cone of positive operators} (see \cite{AB85,KLS89}) % and {\it the cone of weakly positive operators} from $Z$ to $Y$ \cite{DGLL17} are
is defined %respectively
by
%\begin{align*}
$\L_+(S,K):=\{T\in\L(Z,Y): T(S)\subset K\}$. %, \\
%\L_+^w(S,K)&:=\{T\in \L(Z,Y) : T(S)\cap (-\inte K)=\emptyset\}.
%\end{align*}
When  $Y = \mathbb{R}$, this  cone collapse to the positive dual cone $S^+:=\{ z^* \in Z^\ast :\la z^*, s \ra \geq 0, \forall s \in S \}$ of $S$. 
\medskip

%{$\bullet$}
 {\it  Conjugate Mappings  of Vector-Valued Functions.  } The following notion of conjugate mapping of a mapping is specified from the corresponding  one for set-valued mappings in   \cite[Definition 7.4.2]{BGW09} and \cite[Definition 3.1]{Tanino92}.

\begin{definition}  \label{def2.3} For  $F\colon X\rightarrow Y\cup\{+\infty_{Y}\}$,  the set-valued mapping $F
^{\ast }\colon \mathcal{L}(X,Y)\rightrightarrows Y^{\bullet }$ defined by 
$F ^{\ast }(L):=\wsup\{L(x)-F (x):x\in X\}$
is called the \emph{conjugate mapping} of $F $. The \emph{$K$-epigraph}  and the \emph{domain} of $F^*$  are, respectively,
\begin{align*}
\epi_{K}\!\! F ^{\ast }&:=\big\{(L,y)\in \mathcal{L}(X,Y)\times Y :   y \in     F
^{\ast }(L) + K \!\big\}, \\
 \dom\! F ^{\ast }&:=\big\{L\in \mathcal{L}(X,Y)  : F ^{\ast }(L)\!\neq\!
\! \{+\infty
_{Y}\}\!\big\}. 
\end{align*}
\end{definition}
It is well-known that   if $F\colon X\rightarrow Y\cup\{+\infty_{Y}\}$ is a proper
mapping then $\epi F^{\ast}$ is a closed subset of $
\mathcal{L}(X,Y)\times Y$ { (see  \cite[Lemma {3.5}]{DGLL17}).} Moreover, the next characterization of $ \epi_K F ^{\ast } $ (see \cite[Theorem 3.1]{DGLMJOTA16}) is of importance in the sequent:
\begin{eqnarray} \label{epiF-star}
(L,y)\in \epi_K F ^{\ast }   &\Longleftrightarrow& [F (x)-L(x)+y\notin - \inte K,\; \forall x\in X]. 
\end{eqnarray} 
For a subset $D\subset X$, the \emph{indicator map} $
I^X_D\colon X\rightarrow {Y}^{\bullet }$ is defined by $I^X_D (x) = 0_Y $ if $x \in D$ and $I^X_D(x) = +\infty_Y $, otherwise.  %It is interesting that the cone of weak operators is exactly the domain of the dual mapping of $I^Z_{-S}$  
%\cite[Proposition {3.1}]{DGLL17}, i.e.,\footnote{Long, if this formula we do not use in the left, do not write in a separate line .}  
%\begin{equation} \label{domI*}
%\dom (I^Z_{-S})^\ast=\L_+^w(S,K) .
%\end{equation}  
As the cone $K$ in $Y$  is fixed throughout the  paper, for the sake of simplicity,  from now on we will write $\epi F$ and $\epi F^*$
instead of $\epi_KF$ and $\epi_K F^*$, respectively. By the same reason we will write $I_D$ instead of $I^X_D$ %or $I^Z_D$ 
where there is no confusion.

\section{New Tools and Basic Lemmas} 
\label{section3}

{In this section we firstly introduce the order space $\left(\mathcal{P}_p(Y)^\bullet, \preccurlyeq_K\right) $ and the structure $(\mathcal{P}_p(Y)^\infty, \preccurlyeq_K, {\uplus})$ with the so-called {\it ``WS-sum".} New notions  of extended epigraphs of conjugate mappings  and  a new operator on these sets, called the  $\boxplus$-sum.
Next, we prove { two basic lemmas   (serves as  key tools of our results). One among them (Lemma \ref{thm:3.1aa})   establishes } the relation between the ``normal" epigraph (defined in Section 2) of the conjugate of a sum of two mappings and  the   $\boxplus$-sum  of  two extended epigraphs   of the conjugates of  such two mappings.         }

\subsection{The Ordered Space  $\left(\mathcal{P}_p(Y)^\bullet, \preccurlyeq_K\right) $  and The Structure $(\mathcal{P}_p(Y)^\infty, \preccurlyeq_K, {\uplus})$}

{ \it  The Ordered Space  $\left(\mathcal{P}_p(Y)^\bullet, \preccurlyeq_K\right) $.}  
Let $\mathcal{P}_0(Y^\bullet)$ be the collection  of all non-empty subsets of $Y^\bullet$.  
The  ordering ``$\preccurlyeq_K$''  on   $\mathcal{P}_0(Y^\bullet)$  is defined   \cite{DL2017}   as, 
 for $M,N \in \mathcal{P}_0(Y^\bullet)$, 
\begin{equation}
\label{eq_6.33a}
M \ \preccurlyeq_K N \ \     \Longleftrightarrow \ \   \left( v\not<_Ku, \; \forall u\in M,\; \forall v\in N\right).  
\end{equation}
For other orderings on $\mathcal{P}_0(Y^\bullet)$, see, e.g.,  \cite{Kuroiwa98}.

\begin{proposition}\label{prop_4gg} The following assertions hold 
\begin{itemize}
\item[$\rm(i)$]  For all  $ \emptyset \ne M,N\subset Y$, one has:  
\begin{eqnarray*}  M  \preccurlyeq_K N     \ \     &\Longleftrightarrow& \ \      N\cap (M-\inte K)= \emptyset  \\ \ \ 
    &\Longleftrightarrow& \ \  M\cap (N+\inte K)= \emptyset.
    \end{eqnarray*}
\item[$\rm(ii)$] For all $M\in \mathcal{P}_0(Y^\bullet)$, one has  $\winf M \preccurlyeq_K M \textrm{ and } M  \preccurlyeq_K \wsup M. $

\item[$\rm(iii)$] $\textrm{If } M\subset N \subset Y^\bullet$ then $\wsup M \preccurlyeq_K \wsup N$.

%\item[$\rm(iv)$] For all  $M,N\in \mathcal{P}_0(Y^\bullet)$, if $M\preccurlyeq_K N$ then $\wsup M \preccurlyeq_K \winf N.$
\end{itemize}
\end{proposition}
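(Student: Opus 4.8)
The plan is to treat parts $(\rm i)$ and $(\rm ii)$ as direct unwindings of the definitions and to reserve a genuine (though short) argument for $(\rm iii)$. For $(\rm i)$ I would simply rewrite the strict relation: by \eqref{eq_weak_ordering_strict}, $v<_K u$ means $v-u\in-\inte K$. Reading this membership as $v\in u-\inte K$ shows that ``there exist $u\in M$, $v\in N$ with $v<_K u$'' is exactly $N\cap(M-\inte K)\neq\emptyset$, while reading the same membership as $u\in v+\inte K$ turns it into $M\cap(N+\inte K)\neq\emptyset$. Negating both statements and comparing with the definition \eqref{eq_6.33a} of $M\preccurlyeq_K N$ yields the two claimed set-disjointness equivalences; no convexity or topology enters here. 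For $(\rm ii)$ I would read the conclusion straight off Definition \ref{def1}: the very first requirement for $\bar v\in\wsup M$ to be a weakly supremal element of $M$ is $\bar v\not<_K v$ for all $v\in M$, which, quantified over $\bar v$, is precisely $M\preccurlyeq_K\wsup M$; symmetrically, the first clause of weak infimality gives $v\not<_K\bar u$ for all $v\in M$ and $\bar u\in\winf M$, i.e.\ $\winf M\preccurlyeq_K M$ (this last assertion also follows from the supremum case via Remark \ref{rem_1hh}).

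Part $(\rm iii)$ is the only substantive one, and I would prove it by contradiction using the \emph{second} (approximation) clause of weak supremality. Suppose $\wsup M\preccurlyeq_K\wsup N$ fails, so that there are $\bar u\in\wsup M$ and $\bar v\in\wsup N$ with $\bar v<_K\bar u$. Since $\bar u$ is weakly supremal for $M$ and $\bar v<_K\bar u$, that clause produces some $m\in M$ with $\bar v<_K m$. But $M\subset N$ forces $m\in N$, and the first clause of weak supremality of $\bar v$ for $N$ gives $\bar v\not<_K n$ for every $n\in N$, in particular $\bar v\not<_K m$, contradicting $\bar v<_K m$.

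The step to watch is the orientation of the approximation clause in Definition \ref{def1}$(\rm b)$: it quantifies over test elements $\tilde v$ lying strictly \emph{below} the supremal element (not above), so one must apply it with $\tilde v=\bar v$ and $\bar u$ playing the role of the supremal element---precisely the configuration in which $M\subset N$ is used. I would stress that the argument is purely order-theoretic: it invokes neither transitivity of $<_K$ nor the trichotomy of Proposition \ref{pro_nwwew}, so the possible presence of $\pm\infty_Y$ in $\wsup M$ or $\wsup N$ requires no separate handling. A set-theoretic alternative does exist---via $(\rm i)$ one reduces $\wsup M\preccurlyeq_K\wsup N$ to $\wsup N\cap(\wsup M-\inte K)=\emptyset$ and then replaces $\wsup M-\inte K$ by $M-\inte K$ using Proposition \ref{pro_decomp}$(\rm iii)$, noting $M-\inte K\subset N-\inte K=\wsup N-\inte K$ and that $\wsup N$ is disjoint from $\wsup N-\inte K$ by the decomposition in Proposition \ref{pro_decomp}$(\rm iv)$---but this route forces a case split along Proposition \ref{pro_nwwew} to secure the hypotheses $M,N\subset Y$ of that proposition, which the contradiction argument neatly sidesteps.
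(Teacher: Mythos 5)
Your proposal is correct. For parts (i) and (ii) you do exactly what the paper does: the paper's entire proof of these two items is the remark that they ``follow easily from \eqref{eq_6.33a}'', and your unwinding of the membership $v-u\in-\inte K$ in the two possible directions, together with reading the first clauses of Definition \ref{def1}(a),(b), is the intended content of that remark. The real difference is in (iii): the paper does not prove it at all but quotes it as Proposition 1 of \cite{DL2017}, whereas you supply a self-contained argument. Your contradiction proof is sound --- assuming $\bar v\in\wsup N$, $\bar u\in\wsup M$ with $\bar v<_K\bar u$, the approximation clause of Definition \ref{def1}(b) applied to $\bar u$ with test element $\tilde v=\bar v$ produces $m\in M\subset N$ with $\bar v<_K m$, contradicting the first clause for $\bar v$ --- and you are right that it is purely order-theoretic, needs no restriction to $M,N\subset Y$, and therefore avoids the case analysis over $\{\pm\infty_Y\}$ that the set-theoretic route through Proposition \ref{pro_decomp}(iii)--(v) would require. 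What the citation buys the paper is brevity; what your argument buys is that the proposition becomes verifiable within the paper itself, at the cost of two extra lines. The only cosmetic caveat is that $\preccurlyeq_K$ is defined on $\mathcal{P}_0(Y^\bullet)$, i.e.\ for nonempty sets, so (iii) implicitly assumes $M\ne\emptyset$ (which, via Proposition \ref{pro_nwwew}, guarantees $\wsup M$ and $\wsup N$ are nonempty); this does not affect your argument.
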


\begin{proof}
 $\rm(i)$ and  $ \rm(ii)$ follow easily from  \eqref{eq_6.33a}  while  (iii)   is   \cite[Proposition 1]{DL2017}.
\end{proof}

A subset $U    \subset   Y  $  is called a {\it $(Y,K)$-partition style subset of $Y$}  if  the following decomposition of $Y$  holds 
\begin{equation} \label{defPp}
Y=(U-\inte K)\cup U\cup (U+\inte K).
\end{equation}  
Denote   by $\mathcal{P}_p(Y)$ the collection of  all  $(Y,K)$-partition style subsets of $Y$ and  set 
$$\mathcal{P}_p(Y)^\bullet := \mathcal{P}_p(Y) \cup \{ \{ + \infty_Y\}, \{ - \infty_Y\}\}.$$ 
It is obvious  that  if  $M\subset Y^\bullet$ then  $\pm\wsup M \in \mathcal{P}_p( Y)^\bullet$,  $\pm\winf M \in \mathcal{P}_p( Y)^\bullet$ and (by \eqref{eq_6.33a}),   for any  $U \in \mathcal{P}_p(Y)$,  one has $U   \preccurlyeq_K \{ + \infty_Y\}$ and $\{ - \infty_Y\}  \preccurlyeq_K U$.    
Moreover,   $\left(\mathcal{P}_p(Y)^\bullet, \preccurlyeq_K\right) $  is an ordered space \cite{DL2017}, i.e.,  $\preccurlyeq_K$ is a partial order on $\mathcal{P}_p(Y)^\bullet $ with properties: reflexive, anti-symmetric,
and transitive.

% as shown in the next proposition.
%\begin{proposition}{\rm \cite[Proposition 3]{DL2017}}  \label{pro_5hhbis}
% Let  $U,V, W \in \mathcal{P}_p(Y)^\bullet$.     One has 
% \begin{itemize}
%\item[$\rm(i)$]     $U  \preccurlyeq_K  U $, 
%\item[$\rm(ii)$]     $U  \preccurlyeq_K  V $ and    $V  \preccurlyeq_K  U $ then  $U = V$,
%\item[$\rm(iii)$]      $U  \preccurlyeq_K  V $ and    $V \preccurlyeq_K  W$ then 
%    $U  \preccurlyeq_K  W $.  
%   \end{itemize} 
%\end{proposition}

\begin{proposition}\label{pro_5hh}  Let  $U,V\in \mathcal{P}_p(Y)^\bullet$.     Then 
\begin{itemize}
\item[$\rm(i)$]    If\  $U\subset V$ then $U=V$, 

\item[$\rm(ii)$]     If\  $U\in \mathcal{P}_p(Y)$ then 
\begin{align}
U+K=U\cup (U+\inte K),  \ \    
U-K=U\cup (U-\inte K).     \label{eq_7gg}
\end{align}
\end{itemize}
\end{proposition}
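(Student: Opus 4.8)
The plan is to prove the two parts of Proposition \ref{pro_5hh} separately, relying on the decomposition \eqref{defPp} defining partition-style sets together with the cone identity \eqref{int-plus}.

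\medskip

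\textbf{Part (i).} First I would dispose of the degenerate cases. If $U=\{+\infty_Y\}$ or $U=\{-\infty_Y\}$, the only way to have $U\subset V$ with $V\in\mathcal{P}_p(Y)^\bullet$ is $V=U$, since the other two types of members of $\mathcal{P}_p(Y)^\bullet$ are subsets of $Y$ (hence cannot contain $\pm\infty_Y$) and a singleton at infinity cannot be contained in a subset of $Y$. So assume $U,V\in\mathcal{P}_p(Y)$, i.e.\ both satisfy \eqref{defPp}. The key observation is that for a partition-style set $U$, the three pieces $U-\inte K$, $U$, $U+\inte K$ are pairwise disjoint and cover $Y$; in particular an element $v\in V$ that lies in $U$ cannot simultaneously lie in $V-\inte K$ or $V+\inte K$. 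Given $U\subset V$, I would show $V\subset U$ as well. Take $v\in V$; by the decomposition of $Y$ relative to $U$, either $v\in U$ (done), or $v\in U+\inte K$, or $v\in U-\inte K$. Suppose $v\in U+\inte K$; then there is $u\in U\subset V$ with $v-u\in\inte K$, i.e.\ $u<_K v$, which forces $v\in V+\inte K$. But $v\in V$ and the decomposition of $Y$ relative to $V$ says $V$ is disjoint from $V+\inte K$, a contradiction. The case $v\in U-\inte K$ is symmetric, giving $v\in V-\inte K$ and again contradicting disjointness. Hence $v\in U$, so $V\subset U$ and thus $U=V$.

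\medskip

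\textbf{Part (ii).} Here I would prove $U+K=U\cup(U+\inte K)$; the identity for $U-K$ follows by the same argument (or by applying the first to $-U$ and using $-\inte K=\inte(-K)$). The inclusion $\supset$ is immediate since $0_Y\in K$ gives $U\subset U+K$ and $\inte K\subset K$ gives $U+\inte K\subset U+K$. For $\subset$, take $w\in U+K$, say $w=u+k$ with $u\in U$, $k\in K$. If $k\in\inte K$ then $w\in U+\inte K$ and we are done. Otherwise $k\in\bd K=K\setminus\inte K$. In this boundary case I want to conclude $w\in U\cup(U+\inte K)$; the natural route is the decomposition \eqref{defPp} of $Y$ relative to $U$: since $w\in Y$, either $w\in U$, or $w\in U+\inte K$, or $w\in U-\inte K$, and it suffices to rule out the last possibility. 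If $w\in U-\inte K$, then $w=u'-k'$ for some $u'\in U$ and $k'\in\inte K$, whence $u+k=u'-k'$, i.e.\ $u'-u=k+k'\in K+\inte K=\inte K$ by \eqref{int-plus}; this says $u<_K u'$ with both $u,u'\in U$, i.e.\ $u'\in U+\inte K$, contradicting the disjointness of $U$ from $U+\inte K$ in the decomposition of $Y$ relative to $U$. Therefore $w\in U\cup(U+\inte K)$, completing the inclusion and hence the identity.

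\medskip

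I expect the main obstacle to be the boundary subcase $k\in\bd K$ in part (ii): one cannot directly place $u+k$ into $U+\inte K$, and the whole point of the partition hypothesis is that it lets one argue indirectly by exclusion, using \eqref{int-plus} to convert the sum of a boundary vector and an interior vector into an interior vector. I would be careful to invoke only the defining decomposition \eqref{defPp} and the identity $K+\inte K=\inte K$, and to keep the disjointness of the three pieces explicit, since that disjointness (noted in the footnote to Proposition \ref{pro_decomp}) is exactly what makes the exclusion arguments in both parts valid.
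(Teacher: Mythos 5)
Your proposal is correct. For part (ii) your argument is essentially the paper's: the paper shows $(U+K)\cap(U-\inte K)=\emptyset$ by supposing $u+k=v-k_0$ and deducing $u=v-(k_0+k)\in U\cap(U-\inte K)$ via $K+\inte K=\inte K$, then concludes $U+K\subset U\cup(U+\inte K)$ from the disjoint decomposition of $Y$; you run the same exclusion pointwise, landing on the equivalent contradiction $u'\in U\cap(U+\inte K)$, and both treatments handle the second identity by symmetry. The only substantive difference is part (i): the paper gives no argument at all, simply citing Proposition 4 of [DL2017], whereas you supply a self-contained proof. Your proof of (i) is sound --- the degenerate cases with $\{\pm\infty_Y\}$ are handled correctly, and the exclusion argument (an element $v\in V\cap(U\pm\inte K)$ would, via $U\subset V$, land in $V\cap(V\pm\inte K)$, contradicting disjointness of the three pieces for $V$) is exactly the kind of reasoning the cited reference must use, so your version has the advantage of keeping the paper self-contained at the cost of a few extra lines.
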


\begin{proof}
$\rm(i)$ is   \cite[Proposition 4]{DL2017}. For the proof of 
$\rm(ii)$,   take $U\in \mathcal{P}_p(Y)$.    It is clear that  
\begin{equation} \label{1a} 
U+K\supset U\cup (U+\inte K). \end{equation} 
Next, we  claim that $(U+K)\cap (U-\inte K)=\emptyset$. Indeed, assume the contrary that  $(U+K)\cap (U-\inte K)\ne\emptyset$. Then, there exist $u,v\in U$, $k\in K$, and $k_0\in \inte K$ such that $u+k=v-k_0$, and hence,  $u=v-k_0-k\in U\cap (U-\inte K)$, which contradicts  the decomposition   \eqref{defPp}.
  So,  $(U+K)\cap (U-\inte K)=\emptyset$, and 
% \begin{equation*}
$U+K\subset U\cup (U+\inte K). $
%\end{equation*} 
The first equality in \eqref{eq_7gg}  now follows from \eqref{1a} while that  of the second one   is similar.  
\end{proof}

 {\it The structure $(\mathcal{P}_p(Y)^\infty, \preccurlyeq_K, {\uplus})$.}  We now introduce a  new kind of  ``sum" of two sets (called ``WS-sum")  on the collection $\mathcal{P}_p(Y)^\infty :=    \mathcal{P}_p(Y)\cup \{\{+ \infty_Y\}\} $, which will be one of the main tools for our main results in the next sections. 
  
  \begin{definition}\label{def_3.1zz}
 For  $U,V\in  \mathcal{P}_p (Y)^\infty  $, the  {\it WS-sum} of $U$ and $V$, denoted by  $U\uplus V$,  is  a set from  $\mathcal{P}_p (Y)^\infty $  and is  defined by
\begin{equation}\label{newsum}
 U\uplus V\   :=   \  \wsup (U+V).
\end{equation}
\end{definition}

\begin{lemma}\label{pro_6hh}
Let  $U,V\in  \mathcal{P}_p(Y)$. Then
\begin{itemize}     
\item[$\rm(i)$] the following decompositions of $Y$  hold:
$$Y=(U-\inte K)\cup (U+K)=(U-K)\cup (U+\inte K), $$
\item[$\rm (ii)$]   it holds: $U\preccurlyeq_K V \Longleftrightarrow V\subset U+K $

\hskip2.6cm$\Longleftrightarrow U\subset V-K$, 

%\begin{align*}
% U\preccurlyeq_K V &\Longleftrightarrow V\subset U+K \\
% &\Longleftrightarrow U\subset V-K,  
%\end{align*}
\medskip

\item[$\rm(iii)$]   $\wsup U=\winf U=U$.   
\end{itemize}
\end{lemma}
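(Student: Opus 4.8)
The plan is to derive all three items from the partition identity \eqref{defPp} together with the two facts already recorded for partition-style sets: Proposition \ref{pro_5hh}(ii), which rewrites $U\pm K$, and Proposition \ref{prop_4gg}(i), which characterizes $\preccurlyeq_K$ by disjointness of translates. For (i) I would start from $U+K=U\cup(U+\inte K)$ and $U-K=U\cup(U-\inte K)$; substituting these into the claimed right-hand sides and invoking \eqref{defPp} immediately yields the set equalities $(U-\inte K)\cup(U+K)=Y$ and $(U-K)\cup(U+\inte K)=Y$. To upgrade them to genuine decompositions (disjoint unions) I would reuse the disjointness computation inside the proof of Proposition \ref{pro_5hh}(ii), where $(U+K)\cap(U-\inte K)=\emptyset$ is established; the symmetric computation gives $(U-K)\cap(U+\inte K)=\emptyset$. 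This step is routine bookkeeping.

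For (ii), the point is that part (i) exhibits $U+K$ and $U-\inte K$ as complementary subsets of $Y$, and likewise $V-K$ and $V+\inte K$. By Proposition \ref{prop_4gg}(i), $U\preccurlyeq_K V$ is equivalent to $V\cap(U-\inte K)=\emptyset$; since $V\subset Y$ and $Y=(U-\inte K)\cup(U+K)$ is a disjoint union, this is exactly $V\subset U+K$. The second equivalence is parallel: Proposition \ref{prop_4gg}(i) also gives $U\preccurlyeq_K V\Leftrightarrow U\cap(V+\inte K)=\emptyset$, and the decomposition $Y=(V-K)\cup(V+\inte K)$ from (i) turns this into $U\subset V-K$.

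For (iii), I would first confirm that $\wsup U\in\mathcal{P}_p(Y)$: as $U\subset Y$ is nonempty and disjoint from $U-\inte K$, we get $Y\setminus(U-\inte K)\ne\emptyset$, so $\wsup U\ne\{+\infty_Y\}$ by Proposition \ref{pro_decomp}(i); and $\wsup U\ne\{-\infty_Y\}$ since every $v\in U$ satisfies $-\infty_Y<_K v$, whence $\emptyset\ne\wsup U\subset Y$ by Proposition \ref{pro_nwwew}. The crux is to prove $U\subset\wsup U$ and then conclude by the ``no proper containment'' Proposition \ref{pro_5hh}(i). I would compare \eqref{defPp} with the decomposition $Y=(U-\inte K)\cup\wsup U\cup(\wsup U+\inte K)$ of Proposition \ref{pro_decomp}(iv). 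Fixing $u\in U$ and assuming $u\notin\wsup U$, decomposition (iv) forces either $u\in U-\inte K$, impossible since $u\in U$ and these blocks are disjoint in \eqref{defPp}, or $u\in\wsup U+\inte K$; in the latter case $u=w+k$ with $w\in\wsup U$ and $k\in\inte K$ gives $w=u-k\in U-\inte K$, contradicting the disjointness of $\wsup U$ and $U-\inte K$ in (iv). Hence $U\subset\wsup U$ and Proposition \ref{pro_5hh}(i) yields $\wsup U=U$. The identity $\winf U=U$ follows by the same argument applied to the $\winf$-analogue of Proposition \ref{pro_decomp}(iv) furnished by Remark \ref{rem_1hh}, interchanging the roles of $+\inte K$ and $-\inte K$.

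I expect item (iii) to be the main obstacle: the first two items are bookkeeping layered on the quoted propositions, whereas (iii) requires correctly aligning the two distinct decompositions of $Y$ and exploiting their shared first block $U-\inte K$ to force the containment $U\subset\wsup U$, rather than merely matching the complements $Y\setminus(U-\inte K)$ of the two decompositions.
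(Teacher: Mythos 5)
Your proof is correct. Parts (i) and (ii) follow the paper's route exactly (Proposition \ref{pro_5hh}(ii) plus \eqref{defPp}, then Proposition \ref{prop_4gg}(i)), but for part (iii) you take a genuinely different path. The paper computes $\wsup U$ explicitly via Proposition \ref{pro_decomp}(v): it shows that $U-K$ is closed (as the complement of the open set $U+\inte K$, by (i)) and is the \emph{smallest} closed set containing $U-\inte K$ (via the approximating points $w_n=u-k-\tfrac1n k_0$), so that $\cl(U-\inte K)=U-K$ and hence $\wsup U=(U-K)\setminus(U-\inte K)=U$ by \eqref{eq_7gg}. You instead align the partition \eqref{defPp} with the decomposition of Proposition \ref{pro_decomp}(iv), use their common block $U-\inte K$ to rule out $u\in\wsup U+\inte K$ for $u\in U$, deduce $U\subset\wsup U$, and finish with the no-proper-containment Proposition \ref{pro_5hh}(i). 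Both arguments are sound; yours is more purely order/set-theoretic (the topology is hidden inside the quoted Proposition \ref{pro_decomp}(iv)), while the paper's yields the explicit identities $\cl(U-\inte K)=U-K$ and $\wsup U=(U-K)\setminus(U-\inte K)$ as useful byproducts. Your preliminary verification that $\emptyset\ne\wsup U\subset Y$ (via Propositions \ref{pro_decomp}(i) and \ref{pro_nwwew}) correctly licenses both the use of (iv) and the final appeal to Proposition \ref{pro_5hh}(i), and the transfer to $\winf U=U$ via Remark \ref{rem_1hh} is unproblematic.
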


\begin{proof}
It is easy to see that (i)   is a direct consequence of \eqref{defPp} and Proposition \ref{pro_5hh}(ii) while  $\rm (ii)$ follows  from  (i) and Proposition \ref{prop_4gg}(i).   
 For (iii), as  $U\in  \mathcal{P}_p(Y)$,  \eqref{defPp} holds
yielding  $Y\setminus (U-\inte K) \neq \emptyset$, and hence, by   Proposition \ref{pro_decomp}(i),   $\wsup U\ne\{+\infty_Y\}$. Now, according to  Proposition \ref{pro_decomp}(v), $\wsup U=\cl(U-\inte K)\setminus (U-\inte K)$.

On the other hand, as $U+\inte K$ is  open,  $U-K$ is closed (see (i)). 
We  now show that $U-K$ is the   smallest closed subset of $Y$ containing $U-\inte K$.  Indeed, assume that $M$ is a closed subset containing $U-\inte K$, we will prove that $M\supset U-K$. Take $w\in U-K$. Then, there are $u\in U$ and $k\in K$ such that $w=u-k$. Pick $k_0\in \inte K$. It is easy to see that $w_n:=u-k-\frac{1}{n}k_0\in U-K-\inte K=U-\inte K\subset M$ for all $n\in \mathbb{N}^*$ and $w_n\to w$. So, by the closedness of $M$, $w\in M$ and 
 $\cl(U-\inte K)=U-K$.  Thus, by Proposition \ref{pro_decomp}(v),
 $$\wsup U=\cl(U-\inte K)\setminus (U-\inte K)=(U-K)\setminus (U-\inte K).$$ 
  This, together with \eqref{eq_7gg},   yields $\wsup U=U$ (note that $U\cap (U-\inte K)= \emptyset$).
The proof of the equality $\winf U=U$ is  similar. \end{proof}

We now give some  properties of the structure $(\mathcal{P}_p(Y)^\infty, \preccurlyeq_K, {\uplus})$.   

\begin{proposition} \label{prop_1ab}  Let  $U,V,W, U',V'\in \mathcal{P}_p(Y)^\infty$, $y\in Y$.  One has
\begin{itemize}
%\item[$\rm(a)$] If $N\subset -K$ and $N\ni 0_Y$ then $U\uplus (\wsup N)=U$,
\item[$\rm(i)$] $U\uplus (-\bd K)=U$,

\item[$\rm(ii)$]  $U\uplus V=V\uplus U$   (commutative), 

\item[$\rm(iii)$]  $(U\uplus V)\uplus W= U\uplus (V\uplus W)$ (associative),

\item[$\rm(iv)$]  If $U\preccurlyeq_K V$ then $U\uplus W\preccurlyeq_K V\uplus W$  (compatible of  the sum $\uplus$ with $\preccurlyeq_K$), 

\item[$\rm(v)$] If $U\preccurlyeq_K V$ and $U'\preccurlyeq_K V'$ then $U\uplus U'\preccurlyeq_K V\uplus V'.$

\item[$\rm(vi)$]   $y\in (U\uplus V)+K$ if and only if there exists $W'\in \mathcal{P}_p(Y)$ such that  $U \preccurlyeq_K W'$ and $y\in W'\uplus V$.
\end{itemize}
\end{proposition}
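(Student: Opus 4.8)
The plan is to reduce every assertion to the defining identity $U\uplus V=\wsup(U+V)$ and then offload all the work onto the $\wsup$-calculus already available: the translation rule and the collapsing rules of Proposition~\ref{pro_decomp}(ii),(vi),(vii), the monotonicity of $\wsup$ in Proposition~\ref{prop_4gg}(iii), and the two characterizations of $\preccurlyeq_K$ in Lemma~\ref{pro_6hh}(ii). Throughout I would argue in the generic case $U,V,W,U',V'\in\mathcal{P}_p(Y)$ (so all WS-sums land in $\mathcal{P}_p(Y)$) and dispose of the cases where one set equals $\{+\infty_Y\}$ directly from the conventions \eqref{2.4} and $\wsup\{+\infty_Y\}=\{+\infty_Y\}$.

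For (i), I first note $-\bd K=\wsup\{0_Y\}$ (by Proposition~\ref{pro_decomp}(v), $\wsup\{0_Y\}=\cl(-\inte K)\setminus(-\inte K)=-\bd K$), so $-\bd K\in\mathcal{P}_p(Y)$ and the WS-sum is defined; then $U\uplus(-\bd K)=\wsup(U-\bd K)=\wsup U=U$ using the ``in particular'' clause of Proposition~\ref{pro_decomp}(vii) and then Lemma~\ref{pro_6hh}(iii). Parts (ii) and (iii) are immediate: commutativity is commutativity of set addition inside $\wsup$, and for associativity both $(U\uplus V)\uplus W$ and $U\uplus(V\uplus W)$ collapse to $\wsup(U+V+W)$ after using Proposition~\ref{pro_decomp}(vi) to replace an inner summand by its $\wsup$ without changing the outer $\wsup$, together with (ii).

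For (iv) I would rewrite $U\preccurlyeq_K V$ as $U\subseteq V-K$ (Lemma~\ref{pro_6hh}(ii)), add $W$ to obtain $U+W\subseteq(V+W)-K$, apply monotonicity $\wsup(U+W)\preccurlyeq_K\wsup((V+W)-K)$ from Proposition~\ref{prop_4gg}(iii), and erase the $-K$ via $\wsup((V+W)-K)=\wsup(V+W)$ (Proposition~\ref{pro_decomp}(vii)); this is exactly $U\uplus W\preccurlyeq_K V\uplus W$. Part (v) is then a two-step comparison: (iv) gives $U\uplus U'\preccurlyeq_K V\uplus U'$ and $U'\uplus V\preccurlyeq_K V'\uplus V$, and commutativity (ii) together with transitivity of the partial order $\preccurlyeq_K$ chains these into $U\uplus U'\preccurlyeq_K V\uplus V'$.

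The real work is (vi), and the hard part will be the forward implication, where the witness $W'$ must be produced. The starting reformulation is $y\in(U\uplus V)+K\iff y\notin(U+V)-\inte K$, obtained from the disjoint decomposition of Proposition~\ref{pro_decomp}(iv) applied to $M=U+V$ together with $\wsup(U+V)+K=\wsup(U+V)\cup(\wsup(U+V)+\inte K)$ from Proposition~\ref{pro_5hh}(ii). For $(\Leftarrow)$, given $W'$ with $U\preccurlyeq_K W'$ and $y\in W'\uplus V$, part (iv) yields $U\uplus V\preccurlyeq_K W'\uplus V$, which by Lemma~\ref{pro_6hh}(ii) means $W'\uplus V\subseteq(U\uplus V)+K$, so $y\in(U\uplus V)+K$. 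For $(\Rightarrow)$ I would guess $W':=y-V$, which lies in $\mathcal{P}_p(Y)$ since $-V\in\mathcal{P}_p(Y)$ (negate the partition \eqref{defPp} of $V$) and $\mathcal{P}_p(Y)$ is translation invariant. Two checks remain. First, $U\preccurlyeq_K W'$, i.e.\ $U\subseteq(y-V)-K$, i.e.\ $y-u\in V+K$ for all $u\in U$: indeed $y\notin(U+V)-\inte K$ gives $y-u\notin V-\inte K$, whence $y-u\in V+K$ by the alternative decomposition $Y=(V-\inte K)\cup(V+K)$ of Lemma~\ref{pro_6hh}(i). Second, $y\in W'\uplus V=\wsup((y-V)+V)=y+\wsup(V-V)$ (Proposition~\ref{pro_decomp}(ii)), which reduces to the single fact $0_Y\in\wsup(V-V)$; I would verify this via Proposition~\ref{pro_decomp}(v), noting $0_Y\notin(V-V)-\inte K$ (otherwise some $v_1\in V\cap(V+\inte K)$, contradicting \eqref{defPp}, which also secures $\wsup(V-V)\neq\{+\infty_Y\}$ through Proposition~\ref{pro_decomp}(i)) while $0_Y\in\cl((V-V)-\inte K)$ by approaching $0_Y$ along $-t k_0$, $t\downarrow 0$, for a fixed $k_0\in\inte K$. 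The principal obstacle is exactly the guess $W'=y-V$ and these two verifications; once the equivalence $y\notin(U+V)-\inte K$ is secured, the remainder is bookkeeping with the partition identities.
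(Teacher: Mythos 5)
Your proof is correct, and parts (i)--(v) follow essentially the same route as the paper: reduce everything to $\wsup(U+V)$ and invoke Proposition \ref{pro_decomp}(vi),(vii), Proposition \ref{prop_4gg}(iii), and Lemma \ref{pro_6hh}(ii),(iii) exactly as the paper does (your explicit identification $-\bd K=\wsup\{0_Y\}$ in (i) is a small extra check the paper leaves implicit). The only genuine divergence is the forward implication of (vi). The paper's argument is a one-line translation trick: from $y\in\wsup(U+V)+K$ pick $k\in K$ with $y\in\wsup(U+V)+k$, use Proposition \ref{pro_decomp}(ii) to write this as $\wsup(U+V+k)=(k+U)\uplus V$, and take $W'=k+U$, which satisfies $W'\subset U+K$ and hence $U\preccurlyeq_K W'$. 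Your witness is instead $W'=y-V$, which forces you to prove the auxiliary fact $0_Y\in\wsup(V-V)$ via the decomposition \eqref{defPp}, Proposition \ref{pro_decomp}(i),(v), and a closure argument along $-tk_0$, $t\downarrow 0$. All of these steps check out (in particular $0_Y\notin(V-V)-\inte K$ does follow from disjointness of the partition, and $U\subset(y-V)-K$ does follow from $y\notin(U+V)-\inte K$ via Lemma \ref{pro_6hh}(i)), so your route is valid; it is simply heavier, trading the paper's purely algebraic translation identity for a topological verification. One stylistic note: your witness $y-V$ does not depend on $U$ at all, whereas the paper's $k+U$ is a translate of $U$; either is acceptable since the statement only asks for existence.
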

\begin{proof} 
(i) and (ii) follow easily  from Definition \ref{def_3.1zz} (see Proposition \ref{pro_decomp} (vii)). For 
(iii),  one has, by Proposition \ref{pro_decomp}(vi), 
\begin{eqnarray*} (U\uplus V)\uplus W&=& \wsup (\wsup (U+V)+W)=\wsup (U+V+W)  = U\uplus (V\uplus W). 
\end{eqnarray*} 
\indent (iv)  holds trivially if  $V\uplus W:=\wsup(V+W)=\{+\infty_Y\}$. 
 Assume now that $\wsup (V+W)\subset Y$.   Consequently, $V,W\subset Y$.   As $U\preccurlyeq_K V$, one has $U\subset Y$ and
\begin{align*}
&U\subset V-K \quad \textrm{(by Lemma \ref{pro_6hh}(ii))}\notag\\
\Longrightarrow\; & U+W\subset V+W-K\notag\\
\Longrightarrow\; &\wsup (U+W) \preccurlyeq_K  \wsup (V+W-K) \quad \textrm{(by Proposition \ref{prop_4gg}(iii)).}\label{eq_10nwnw}
\end{align*}
One also has, by  Proposition \ref{pro_decomp}(vii), $\wsup (U+V-K)=\wsup (U+V)$. So, $\wsup (U+W) \preccurlyeq_K  \wsup (V+W) $.  In other words, $U\uplus W\preccurlyeq_K  V\uplus W$.  

 (v) It follows from (iv) and  the transitive property of $\preccurlyeq_K$.

%Proposition \ref{pro_5hhbis}(iii).

(vi)  % \red{ (If)  $U,V\in \mathcal{P}_p(Y)$\footnote{\red{Here $U,V, W\in \mathcal{P}_p(Y) \cup \{\{+ \infty_Y\}\}$. how about $U, V = \{+ \infty_Y\}$?  } }. } 
If  $y\in (U\uplus V)+K$  then there is $k\in K$ such that  (see Proposition \ref{pro_decomp}(ii))   
$$y\in (U\uplus V)+k=\wsup (U+V)+k=\wsup (U+V+k) =(k+U)\uplus V. $$ 
 So, if we take $W'=k+U$ then   $W'\in \mathcal{P}_p(Y)$,   $y\in W'\uplus V$,   and $W'\subset U+K$, meaning that  $U \preccurlyeq_K W'$  by Lemma  \ref{pro_6hh}(ii).    

Conversely, if  there exists $W'\in \mathcal{P}_p(Y)$ such that  $U \preccurlyeq_K W'$  then   
%\begin{equation}\label{eq_10nww}
$U\uplus V \preccurlyeq_K W'\uplus V$ 
%\end{equation}
 (by (iv)). So, if  further,  $y\in W'\uplus V$ then it means that $W'\uplus V\ne \{+\infty\}$, and hence, $W'\uplus V \subset (U\uplus V)+K$  (again, by  Lemma \ref{pro_6hh}(ii)), yielding  $y\in  (U\uplus V)+K$. 
\end{proof}

%\red{
%\begin{remark}
%\label{rem:3.1nw}
%For all $U,V,U'V'\in\mathcal{P}_p(P)$, if $U\preccurlyeq_K V$ and $U'\preccurlyeq_K V'$ then 
%\begin{equation}\label{eq_10}
%U\uplus U'\preccurlyeq_K V\uplus V'.
%\end{equation}
%Indeed, according to Proposition \ref{prop_1ab}(iv), one has $U\uplus U'\preccurlyeq_K V\uplus U'$ and $V\uplus U'\preccurlyeq_K V\uplus V'$. The desired `inequality' \eqref{eq_10} now follows from Proposition \ref{pro_5hhbis}(iii).
%\end{remark}
%}

%-----------------------------------------------------

%\subsection{The Structure $(\mathcal{P}_p(Y)^\infty, \preccurlyeq_K, {\uplus})$}

%---------------------------------

%\section{New Tools and Fundamental Theorem} 
%\label{section3}

%{In this section we firstly introduce  new notions  of extended epigraphs of conjugate mappings  and  a new operator on these sets, called the  $\boxplus$-sum.
%Next, we prove a fundamental theorem  (serves as a key tool of our results)  that establishes the relation between the ``normal" epigraph (defined in Section 2.1) of the conjugate of a sum %of two mappings and  the   $\boxplus$-sum  of  two extended epigraphs   of the conjugates of  such two mappings  (Theorem \ref{thm:3.1aa}).      }   

\subsection{
Extended Epigraphs of Conjugate Mappings and Their $\nplus$-Sums}

\begin{definition} \label{def_exepi} $\phantom{x}$
(a)	The  {\it $K$-extended epigraph} of the conjugate mapping $F^*$ is 
\begin{equation} \label{extepi}
\mathfrak{E}{\rm pi}  F^*:=\{(L,U)\in \L(X,Y)\times \mathcal{P}_p(Y): L\in \dom F^*,\;  F^*(L)\preccurlyeq_K U\}. 
\end{equation} 
	
(b) For $\mathscr{M}_1,\mathscr{M}_2\subset  \L(X,Y)\times \mathcal{P}_p(Y)$,   the {\it $\nplus$-sum}  of these two sets is defined as: 
\begin{equation}\label{eq_12abc}
\mathscr{M}_1\nplus\mathscr{M}_2:=\{(L_1+L_2, U_1\uplus U_2):(L_1,U_1)\in \mathscr{M}_1,\; (L_2,U_2)\in \mathscr{M}_2\}.
\end{equation}
\end{definition}
In particular, if  $F, G : X \raa Y^\bullet$ then the $\nplus$-sum of  $\mathfrak{E}{\rm pi }  F^* $ and  $   \mathfrak{E}{\rm pi }  G^*$  is
$$
\mathfrak{E}{\rm pi }  F^*  \nplus   \mathfrak{E}{\rm pi }  G^*= \{(L_1+L_2, U_1\uplus U_2)\, : \, (L_1,U_1)\in \mathfrak{E}{\rm pi}  F^\ast,\; (L_2,U_2)\in \mathfrak{E}{\rm pi}  G^*   \}. 
$$
We can understand simply that   the extended  epigraph of $F^*$, $\mathfrak{E}{\rm pi}  F^*$,   is   the ``epigraph" of   $F^\ast$ which is  considered as a single valued-mapping  $F^\ast \colon \L(X, Y) \raa  ( \mathcal{P}_p(Y),\preccurlyeq_K)$ and the ``epigraph" here will be understood  in the same way as the one of a real-valued function.  

It is also worth observing that from  the definition of $\nplus $-sum  and  Proposition \ref{prop_1ab},   the     $\nplus$-sum  is  commutative and associative on     $\L(X,Y)\times \mathcal{P}_p(Y)$.  

Note  that $\exepi F^\ast \subset   \L(X,Y)\times \mathcal{P}_p(Y)$ while    $\epi F^\ast \subset   \L(X,Y)\times Y $.  We  define  the set-valued mapping $\Psi$ as follows: 
\begin{align} \label{Psi}
\Psi \colon \L(X,Y)\times \mathcal{P}_p(Y)&\rightrightarrows \L(X,Y)\times Y     \nonumber   \\
(L,U)\ \ \ \ \ \ &\mapsto \Psi (L,U):=\{L\}\times U.  
\end{align}
It is easy to verify  that for  $\mathscr{M}, \mathscr{N}, \mathscr{Q},  \mathscr{M}_i  \subset   \L(X,Y)\times \mathcal{P}_p(Y) $,  for all $i \in I$  ($I$ is an arbitrary index set), it holds 
 \begin{eqnarray} 
 &&\mathscr{M} \subset \mathscr{N}   \ \ \ \Longrightarrow \ \ \    \Psi(\mathscr{M}) \subset \Psi (\mathscr{N}),   \label{eqbbb}    \\
 &&\mathscr{M} \subset \mathscr{N}   \ \ \ \Longrightarrow \ \ \      \mathscr{M}  \nplus \mathscr{Q}      \subset \mathscr{N}   \nplus \mathscr{Q},  \    \label{eqccc} \\
&&\bigcup\limits_{i \in I}  \left(\mathscr{M}_i \boxplus\mathcal{N}\right)=\left(\bigcup\limits_{i \in I}  \mathscr{M}_i \right) \boxplus\mathcal{N},  \ \ \text{and} \label{eqplusunion}\\
 &&\Psi \big(\bigcup\limits_{i \in I}  \mathscr{M}_i \big)\ \  = \ \  \bigcup\limits_{i \in I} \Psi\big(\mathscr{M}_i\big). \label{Psi-cup} 
    \end{eqnarray} 
   The 
 relation between   $ \exepi F^\ast$ and   $\epi F^\ast$  is  given in the next proposition.

\begin{proposition}\label{rel_epi} 
Let $F\colon X\to Y^\bullet$ be  a proper mapping.  Then $\Psi (\exepi F^*) = \epi F^*$.
\end{proposition}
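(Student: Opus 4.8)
The plan is to unfold both sides as subsets of $\L(X,Y)\times Y$ and prove the two inclusions separately. Writing out the definitions, $\Psi(\exepi F^*)=\bigcup\{\{L\}\times U : L\in\dom F^*,\ F^*(L)\preccurlyeq_K U\}$, so a pair $(L,y)$ lies in $\Psi(\exepi F^*)$ exactly when there is some $U\in\P_p(Y)$ with $L\in\dom F^*$, $F^*(L)\preccurlyeq_K U$ and $y\in U$; this must be matched against the characterization $(L,y)\in\epi F^*\iff y\in F^*(L)+K$.

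The decisive preliminary step, which I would isolate first, is to show that $F^*(L)\in\P_p(Y)$ whenever $L\in\dom F^*$. Indeed, $F^*(L)=\wsup M$ with $M=\{L(x)-F(x):x\in X\}$; since $F$ is proper, $\dom F\neq\emptyset$ forces $M\cap Y\neq\emptyset$, so $\wsup M\neq\{-\infty_Y\}$, while $L\in\dom F^*$ gives $\wsup M\neq\{+\infty_Y\}$. By Proposition \ref{pro_nwwew} this leaves $\emptyset\neq F^*(L)\subset Y$, and since $\pm\wsup M\in\P_p(Y)^\bullet$ we conclude $F^*(L)\in\P_p(Y)$. This is what unlocks Lemma \ref{pro_6hh}(ii), turning the abstract order $\preccurlyeq_K$ into the concrete containment involving $+K$.

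For the inclusion $\Psi(\exepi F^*)\subset\epi F^*$, take $(L,y)$ with $(L,U)\in\exepi F^*$ and $y\in U$. Then $L\in\dom F^*$, so $F^*(L)\in\P_p(Y)$, and Lemma \ref{pro_6hh}(ii) converts $F^*(L)\preccurlyeq_K U$ into $U\subset F^*(L)+K$; hence $y\in U\subset F^*(L)+K$, i.e.\ $(L,y)\in\epi F^*$. For the reverse inclusion I would argue constructively. Given $(L,y)\in\epi F^*$ we have $y\in F^*(L)+K$; since $y\in Y$ and $\{+\infty_Y\}+K=\{+\infty_Y\}$ by \eqref{2.4}, this already forces $L\in\dom F^*$, so $F^*(L)\in\P_p(Y)$. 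Writing $y=v+k$ with $v\in F^*(L)$ and $k\in K$, I would set $U:=F^*(L)+k$. Translation preserves the partition-style decomposition \eqref{defPp}, so $U\in\P_p(Y)$; clearly $y\in U$; and $U=F^*(L)+k\subset F^*(L)+K$, which by Lemma \ref{pro_6hh}(ii) is exactly $F^*(L)\preccurlyeq_K U$. Thus $(L,U)\in\exepi F^*$ with $y\in U$, giving $(L,y)\in\Psi(\exepi F^*)$.

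The only genuinely delicate point is the preliminary membership $F^*(L)\in\P_p(Y)$: everything downstream depends on being allowed to invoke Lemma \ref{pro_6hh}(ii), which holds only for partition-style sets, so the properness of $F$ must be used precisely here to rule out the degenerate value $\{-\infty_Y\}$. Once this is in place both inclusions are short, the reverse one hinging on the simple but slightly ad hoc choice $U=F^*(L)+k$ that carries $y$ while staying inside $F^*(L)+K$.
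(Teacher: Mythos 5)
Your proof is correct and follows essentially the same route as the paper's: both inclusions hinge on Lemma \ref{pro_6hh}(ii), and the only difference is the witness in the reverse inclusion --- you take $U=F^*(L)+k$ where the paper takes $U=y+\winf K=y+\bd K$, both of which are valid elements of $\P_p(Y)$ containing $y$ and dominating $F^*(L)$. Your explicit preliminary check that $F^*(L)\in\P_p(Y)$ for $L\in\dom F^*$ (which is what licenses the use of Lemma \ref{pro_6hh}(ii)) is left implicit in the paper and is a sound addition.
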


\begin{proof}
 Assume that  $(L,y)\in \Psi (\exepi F^*)$. Then, there exists 
$U\in \mathcal{P}_p(Y)$ such that $(L,U)\in \exepi F^*$ and $y\in U$. As $(L,U)\in \exepi F^*$, one has $F^*(L)\preccurlyeq_K U$, or equivalently, $U\subset F^*(L)+K$ (by {Lemma} \ref{pro_6hh}(ii)). So, $y\in U\subset F^*(L)+K$, and hence, $(L,y)\in \epi F^*$.

 Assume now that $(L,y)\in \epi F^*$. Then   $y\in F^*(L)+K$. 
As  $\winf K=\bd K \subset K$,  $y+\winf K \subset F^*(L)+K$ which yields  $(L,y+\winf K)\in \exepi F^*$. It is clear that $y\in y+\winf K$ and $y+\winf K\in \mathcal{P}_p(Y)$. So, $(L,y)\in  \Psi(\exepi F^*)$ and we are done.
\end{proof}

\begin{remark}
\label{rem_2dd}
Coming back  to the scalar case,   when $Y=\mathbb{R}$ and $K=\mathbb{R}_+$,  one has $\mathcal{P}_p(Y)={\mathbb{R}}$ while the order ``$\preccurlyeq_K$'' (see  \eqref{eq_6.33a})  and the sum ``$\uplus$''    (see \eqref{newsum}) become the normal order ``$\le$'' and the usual  sum ``$+$''  on the set of extended real numbers, respectively.
Hence, $\L(X,Y)\times \mathcal{P}_p(Y)=X^*\times {\mathbb{R}}$ and the $\boxplus$-sum defined in  \eqref{eq_12abc} collapses to  the usual Minkowski sum of two subsets in $X^*\times {\mathbb{R}}$. In this special case, the mapping $F$ becomes an extended-real-valued function, and the conjugate mapping  $F^\ast$ collapses to the usual conjugate function in the sense of convex analysis. Consequently,  
both $K$-epigraph and $K$-extended epigraph of  conjugate mappings  collapse to their  usual  epigraphs in the sense of   convex analysis. The mapping $\Psi$ (defined by \eqref{Psi})  in this case   is nothing else but the identical mapping of $X^*\times {\mathbb{R}}$. In  other words, if $\mathscr{M}_1, \mathscr{M}_2
\subset  X^*\times {\RR}$, one has
 $\Psi(\mathscr{M}_1 \nplus \mathscr{M}_2)=  \Psi (\mathscr{M}_1 + \mathscr{M}_2) = \mathscr{M}_1 + \mathscr{M}_2.$
\end{remark}

%\red{++++SUA+++++}

\subsection{Basic Lemmas }

%\red{[Dinh corrects  from OCT 4, 2020] }

Let $F_1,F_2\colon X\to Y^\bullet$ be proper $K$-convex mappings. 
We say that the regularity condition $(C_0)$ holds for  $F_1$ and $F_2$ (in this order)  if there holds: 

\medskip

\begin{tabular}{c  c}
$(C_0)$ &  
\begin{minipage}{0.9\textwidth}
$\exists\, \widehat x\in  \dom F_1:  F_2\textrm{ is continuous  at }\widehat x$.
\end{minipage}
\end{tabular}

%\medskip 
%We need some basic lemmas for the fundamental theorem, Theorem \ref{thm:3.1aa}.   
\begin{lemma}[{Basic lemma 1}]
\label{exepi}
Let $F_1,F_2\colon X\to Y^\bullet$ be proper $K$-convex mappings, $M\subset Y$ be nonempty. Assume that  the condition  $(C_0)$ holds for $F_1$ and $F_2$.  
If   
$$\bar y\in Y\setminus [M+(\bar L-F_1-F_2)(X)-\inte K]$$ 
for some   $\bar L\in \L(X,Y)$, 
then  there exist  $L_1,L_2\in\L(X,Y)$ such that $L_1+L_2=\bar L$, $\wsup [M+(L_1-F_1)(X) +(L_2-F_2)(X)]\ne \{+\infty\}$  and
\begin{equation}
\label{eq:15neww}
\bar y\notin M+(L_1-F_1)(X) +(L_2-F_2)(X)-\inte K.
\end{equation}
\end{lemma}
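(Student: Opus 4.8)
The plan is to turn the statement into a separation argument in the product space $X\times Y$ and then to ``vectorize'' the resulting scalar functionals into the operators $L_1,L_2$. First I would recast hypothesis and goal in the language of Section~2. Since $\bar y\notin M+(\bar L-F_1-F_2)(X)-\inte K$, the set $Y\setminus(M-\inte K)$ is nonempty, so Proposition~\ref{pro_decomp}(i) gives $\wsup M\neq\{+\infty_Y\}$ and Proposition~\ref{pro_decomp}(iii) gives $M-\inte K=\wsup M-\inte K$; hence I may work with the partition-style set $\wsup M$ in place of $M$. The hypothesis reads: for all $x$ and all $m$, $F_1(x)+F_2(x)-\bar L(x)+\bar y-m\notin-\inte K$; the goal becomes to find $L_1+L_2=\bar L$ with $(F_1-L_1)(x_1)+(F_2-L_2)(x_2)+\bar y-m\notin-\inte K$ for all $x_1,x_2,m$, plus the finiteness statement.

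For the separation, fix a weakly supremal element $\bar w$ of $M$ and consider in $X\times Y$ the sets $\A:=\epi F_1=\{(x,y):F_1(x)\leqq_K y\}$ and $\B:=\{(x,y):y<_K\bar L(x)-F_2(x)-\bar y+\bar w\}$, the strict $K$-hypograph of the $K$-concave map $x\mapsto\bar L(x)-F_2(x)-\bar y+\bar w$. The hypothesis forces $\A\cap\B=\emptyset$, since a common point would place $\bar y$ in $M+(\bar L-F_1-F_2)(X)-\inte K$; moreover $\A$ is convex ($F_1$ being $K$-convex), and $\B$ is convex with nonempty interior because $(C_0)$ supplies $\widehat x\in\dom F_1$ at which $F_2$ is continuous, so a whole neighborhood of a suitable point lies in $\B$. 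Geometric Hahn--Banach then yields $(x^*,y^*)\in X^*\times Y^*$, not both zero, and $\gamma\in\RR$ separating $\A$ and $\B$. Stability of $\A$ under adding $K$ to the $Y$-component forces $y^*\in K^+$, and the interior point from $(C_0)$ excludes $y^*=0$; thus $y^*\in K^+\setminus\{0\}$ and $\la y^*,k\ra>0$ for all $k\in\inte K$.

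To vectorize, choose $e_0\in\inte K$ with $\la y^*,e_0\ra=1$ and set $L_1:=-\la x^*,\cdot\ra e_0\in\L(X,Y)$, $L_2:=\bar L-L_1$, so $L_1+L_2=\bar L$. Passing the separation inequalities through $\la y^*,\cdot\ra$ gives $\la y^*,(F_1-L_1)(x)\ra\ge\gamma$ and $\la y^*,(F_2-L_2)(x)\ra\ge\la y^*,\bar w-\bar y\ra-\gamma$ for all relevant $x$. By the characterization \eqref{epiF-star} these say $(L_1,-\gamma e_0)\in\epi F_1^*$ and $(L_2,\bar y-\bar w+\gamma e_0)\in\epi F_2^*$, so $F_1^*(L_1)\neq\{+\infty_Y\}$ and $F_2^*(L_2)\neq\{+\infty_Y\}$; together with Proposition~\ref{pro_decomp}(vi) and $\wsup M\neq\{+\infty_Y\}$ this delivers the finiteness claim $\wsup[M+(L_1-F_1)(X)+(L_2-F_2)(X)]\neq\{+\infty_Y\}$. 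Adding the two inequalities yields $\la y^*,(F_1-L_1)(x_1)+(F_2-L_2)(x_2)+\bar y-m\ra\ge\la y^*,\bar w-m\ra$, which is nonnegative once $\la y^*,\bar w\ra\ge\la y^*,m\ra$; since $\la y^*,\cdot\ra<0$ on $-\inte K$, this gives \eqref{eq:15neww}.

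The main obstacle is exactly this last step for a general, possibly non-convex, set $M$: a single operator split must serve every $m\in M$ at once, yet the scalar certificate only controls those $m$ with $\la y^*,m\ra\le\la y^*,\bar w\ra$. The delicate point is therefore to choose the supremal element $\bar w$ --- equivalently to orient $y^*$ --- so that $\la y^*,\bar w\ra\ge\sup_{m\in M}\la y^*,m\ra$, and this is where the weak-supremum machinery of Proposition~\ref{pro_decomp} and the partition structure of $\wsup M$ (which, by the first-step reduction, governs all of $M-\inte K$) must be used. Establishing that a coherent such choice always exists --- rather than the otherwise routine separation and vectorization --- is the crux of the proof.
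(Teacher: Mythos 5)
Your proposal is a separation argument of the right general flavour, but it contains a gap that you yourself identify and do not close, and it is a genuine one, not a routine verification. You fix a single weakly supremal element $\bar w$ of $M$ \emph{first}, separate $\epi F_1$ from the strict hypograph of $x\mapsto \bar L(x)-F_2(x)-\bar y+\bar w$, and only \emph{then} obtain the functional $y^*\in K^+\setminus\{0\}$. The final inequality therefore controls only those $m\in M$ with $\la y^*,m\ra\le\la y^*,\bar w\ra$, and there is no mechanism to guarantee that the $y^*$ produced by Hahn--Banach satisfies $\sup_{m\in M}\la y^*,m\ra\le\la y^*,\bar w\ra$ for the $\bar w$ you chose. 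This can genuinely fail: a $(Y,K)$-partition style set such as the antidiagonal line $\{(t,-t):t\in\RR\}$ in $\RR^2$ with $K=\RR_+^2$ equals its own weak supremum, yet $\la y^*,\cdot\ra$ is unbounded above on it for $y^*=(1,0)\in K^+\setminus\{0\}$; so no single $\bar w$ works for that direction, and you have not shown that the separation cannot output such a $y^*$. Since the lemma is later invoked with $M=(L_3-F_3)(\dom F_3)$, which is typically unbounded, this is not a degenerate corner case.

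The paper's proof sidesteps this by never isolating a single element of $M$: it forms the convex set
$\Delta=\bigcup_{x\in\dom F_1,\,x'\in\dom F_2}\bigl[M+\bar L(x)-F_1(x)-F_2(x')-K\bigr]\times\{x-x'\}$
in $Y\times X$ and separates the \emph{point} $(\bar y,0_X)$ from $\Delta$ (after checking, via $(C_0)$, that $\inte\Delta\supset(\widehat y-\inte K)\times\{0_X\}$ and that $(\bar y,0_X)\notin\inte\Delta$). Because all of $M$ is built into $\Delta$, the separating functional $(y_0^*,u_0^*)$ automatically dominates uniformly over $M$ and over all pairs $(x,x')$; the $X$-component $u_0^*$, acting on the difference variable $x-x'$, is precisely what furnishes the splitting $L_2(u)=u_0^*(u)k_0$, $L_1=\bar L-L_2$. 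Your reduction to $\wsup M$, your derivation of $y^*\in K^+\setminus\{0\}$, and your vectorization via $e_0$ with $\la y^*,e_0\ra=1$ all parallel the paper's Steps 3--4 and are fine; but without the uniform-in-$M$ device the argument does not yield \eqref{eq:15neww}, so the proof is incomplete as it stands. (Note also that the finiteness claim is not something you need to assemble from $F_1^*(L_1)\ne\{+\infty_Y\}$ and $F_2^*(L_2)\ne\{+\infty_Y\}$: once \eqref{eq:15neww} holds it follows immediately from Proposition \ref{pro_decomp}(i).)
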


\begin{proof} (See Appendix A). 
\end{proof}

\begin{lemma}
\label{pro:3.3_nwww}
Let $F_1,F_2,F_3\colon X\to Y^\bullet$ be proper mappings. Then, it holds:
\begin{itemize}
\item[$\rm(i)$] $\exepi F_1^\ast \boxplus \exepi F_2^\ast\subset \exepi (F_1+F_2)^\ast$,

\item[$\rm(ii)$] $\Psi(\exepi F_1^\ast \boxplus \exepi F_2^\ast)\subset \epi (F_1+F_2)^\ast$,

\item[$\rm(iii)$]  $\Psi\left(\exepi F_1^\ast\boxplus\exepi F_2^\ast\boxplus \exepi F_3^\ast \right)\subset  \Psi\left(\exepi (F_1+F_2)^\ast\boxplus \exepi F_3^\ast\right).$
\end{itemize}
\end{lemma}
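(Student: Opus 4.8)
The plan is to reduce the entire lemma to part (i), since (ii) and (iii) are then purely formal. For (ii) I would apply the set-valued map $\Psi$ to the inclusion (i) and use its monotonicity \eqref{eqbbb} together with $\Psi(\exepi (F_1+F_2)^\ast)=\epi(F_1+F_2)^\ast$ from Proposition \ref{rel_epi}. For (iii) I would rewrite the left-hand side as $(\exepi F_1^\ast\boxplus\exepi F_2^\ast)\boxplus\exepi F_3^\ast$ using associativity of $\boxplus$, bound $\exepi F_1^\ast\boxplus\exepi F_2^\ast$ by $\exepi(F_1+F_2)^\ast$ via (i) and the monotonicity \eqref{eqccc} of $\boxplus$, and then apply $\Psi$ through \eqref{eqbbb}. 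So all the work sits in (i).

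For (i), take $(L,U)\in\exepi F_1^\ast\boxplus\exepi F_2^\ast$; by Definition \ref{def_exepi} it has the form $(L_1+L_2,\,U_1\uplus U_2)$ with $(L_i,U_i)\in\exepi F_i^\ast$, so $L_i\in\dom F_i^\ast$ and $F_i^\ast(L_i)\preccurlyeq_K U_i$. Put $A_i:=\{L_i(x)-F_i(x):x\in X\}$ and $B:=\{(L_1+L_2)(x)-(F_1+F_2)(x):x\in X\}$, so that $F_i^\ast(L_i)=\wsup A_i$ and $(F_1+F_2)^\ast(L_1+L_2)=\wsup B$ (since $\wsup$ is unchanged by removing the $-\infty_Y$ values coming from outside the domains, I may treat $A_i,B$ as subsets of $Y$). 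The crux is
\begin{equation*}
(F_1+F_2)^\ast(L_1+L_2)\ \preccurlyeq_K\ F_1^\ast(L_1)\uplus F_2^\ast(L_2),
\end{equation*}
which I would prove by noting that $B$ is exactly the diagonal ($x=x'$) part of $A_1+A_2$, hence $B\subset A_1+A_2\subset Y^\bullet$; Proposition \ref{prop_4gg}(iii) gives $\wsup B\preccurlyeq_K\wsup(A_1+A_2)$, and Proposition \ref{pro_decomp}(vi) rewrites $\wsup(A_1+A_2)=\wsup(\wsup A_1+\wsup A_2)=F_1^\ast(L_1)\uplus F_2^\ast(L_2)$. To license this last step I would record that $F_i$ proper gives $\dom F_i\ne\emptyset$, so each $\wsup A_i$ is neither $\{+\infty_Y\}$ (because $L_i\in\dom F_i^\ast$) nor $\{-\infty_Y\}$; by Proposition \ref{pro_nwwew} it therefore lies in $\mathcal{P}_p(Y)$, which is what Proposition \ref{pro_decomp}(vi) requires.

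To conclude (i) I would combine the crux with the data $F_i^\ast(L_i)\preccurlyeq_K U_i$: compatibility of $\uplus$ with $\preccurlyeq_K$ (Proposition \ref{prop_1ab}(v)) gives $F_1^\ast(L_1)\uplus F_2^\ast(L_2)\preccurlyeq_K U_1\uplus U_2$, and transitivity of $\preccurlyeq_K$ then yields $(F_1+F_2)^\ast(L_1+L_2)\preccurlyeq_K U_1\uplus U_2$. When $U_1\uplus U_2\in\mathcal{P}_p(Y)$ is a genuine subset of $Y$, the relation $\{+\infty_Y\}\preccurlyeq_K U$ fails for every $U\subset Y$ (directly from \eqref{eq_6.33a}), so this inequality forces $(F_1+F_2)^\ast(L_1+L_2)\ne\{+\infty_Y\}$, i.e.\ $L_1+L_2\in\dom(F_1+F_2)^\ast$; hence $(L_1+L_2,\,U_1\uplus U_2)\in\exepi(F_1+F_2)^\ast$, which is (i).

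\textbf{The main obstacle} is not this short inequality chain but the $\pm\infty_Y$ and properness bookkeeping around it. First, $U_1\uplus U_2=\wsup(U_1+U_2)$ can equal $\{+\infty_Y\}$ even for $U_1,U_2\in\mathcal{P}_p(Y)$ --- this already happens in $\RR^2$ with $K=\RR^2_+$ for suitable partition sets whose Minkowski sum is $K$ --- so that degenerate pair falls outside $\L(X,Y)\times\mathcal{P}_p(Y)$ and must be explicitly set aside in (i). Second, the passage in (ii) through Proposition \ref{rel_epi} tacitly needs $F_1+F_2$ to be proper, i.e.\ $\dom F_1\cap\dom F_2\ne\emptyset$: if the domains are disjoint then $(F_1+F_2)^\ast(L)=\{-\infty_Y\}$ for every $L$, whence $\epi(F_1+F_2)^\ast=\emptyset$ although the left-hand side need not be (note that (i) and (iii), being inclusions into $\exepi$- and $\Psi\boxplus$-sets, survive this case, so the caveat is specific to (ii)). I would therefore carry the standing assumption $\dom F_1\cap\dom F_2\ne\emptyset$ (automatic in the intended applications) and, at each invocation of Proposition \ref{pro_decomp}(vi), verify that the weak suprema involved are genuine members of $\mathcal{P}_p(Y)$ rather than $\{\pm\infty_Y\}$.
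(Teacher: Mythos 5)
Your proof of (i) is the paper's own argument --- the inclusion $(L_1+L_2-F_1-F_2)(X)\subset(L_1-F_1)(X)+(L_2-F_2)(X)$ combined with Proposition \ref{prop_4gg}(iii), Proposition \ref{pro_decomp}(vi), Proposition \ref{prop_1ab}(v) and transitivity of $\preccurlyeq_K$ --- and your derivations of (ii) via \eqref{eqbbb} and Proposition \ref{rel_epi}, and of (iii) via associativity together with \eqref{eqbbb}--\eqref{eqccc}, coincide with the paper's proof as well. The two degenerate cases you flag (the possibility $U_1\uplus U_2=\{+\infty_Y\}$, which does occur for suitable partition sets, and the failure of (ii) when $\dom F_1\cap\dom F_2=\emptyset$) are genuine and are passed over in silence by the paper, which implicitly discards $\boxplus$-pairs whose second component is $\{+\infty_Y\}$ and only ever applies the lemma under hypotheses such as $(C_0)$ that force $F_1+F_2$ to be proper; your additional bookkeeping is correct and strengthens rather than alters the argument.
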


\begin{proof} (see Appendix B). 
\end{proof}

\begin{lemma}[{Basic lemma 2}]
\label{thm:3.1aa}
Let $F_1,F_2, F_3\colon X\to Y^\bullet$ be proper $K$-convex mappings and assume that the  condition $(C_0)$ holds for $F_1$ and $F_2$.  Then, one has:
\begin{itemize}
\item[$\rm(i)$] $\epi (F_1+F_2)^*=\Psi(\exepi F_1^\ast \boxplus \exepi F_2^\ast)$,

\item[$\rm(ii)$] $\Psi\left(\exepi (F_1+F_2)^\ast\boxplus \exepi F_3^\ast\right)=\Psi\left(\exepi F_1^\ast\boxplus \exepi F_2^\ast\boxplus\exepi F_3^\ast \right).$

\item[$\rm(iii)$] If, in addition that   one of the following conditions holds\\
\begin{tabular}{c  c}
$(C_0')$ &  
\begin{minipage}{0.9\textwidth}
$\exists \tilde x\in  \dom F_1\cap \dom F_2:  F_3\textrm{ is continuous  at } \tilde x$,
\end{minipage}
\end{tabular}\\
\begin{tabular}{c  c}
$(C_0'')$ &  
\begin{minipage}{0.9\textwidth}
$\exists \bar x\in  \dom F_3:  F_1 \textrm{ and } F_2\textrm{ are continuous  at } \bar  x$, 
\end{minipage}
\end{tabular}
\end{itemize}
\noindent then 
 \begin{eqnarray}\label{eqthm31}
 \epi (F_1+F_2+F_3)^\ast   &=&   \Psi\left(\exepi (F_1+F_2)^\ast\boxplus \exepi F_3^\ast\right)    \nonumber \\
     &=& \Psi\left(\exepi F_1^\ast\boxplus \exepi F_2^\ast\boxplus\exepi F_3^\ast \right).
 \end{eqnarray} 
 \end{lemma}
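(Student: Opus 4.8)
The plan is to prove the three parts in order, using part (i) as the engine that drives (ii) and (iii). For part (i), the inclusion $\Psi(\exepi F_1^\ast \boxplus \exepi F_2^\ast)\subset \epi(F_1+F_2)^\ast$ is already free from Lemma \ref{pro:3.3_nwww}(ii), so the real work is the reverse inclusion $\epi(F_1+F_2)^\ast\subset\Psi(\exepi F_1^\ast \boxplus \exepi F_2^\ast)$. First I would take an arbitrary $(\bar L,\bar y)\in\epi(F_1+F_2)^\ast$ and unfold it via the characterization \eqref{epiF-star}: this says $(F_1+F_2)(x)-\bar L(x)+\bar y\notin-\inte K$ for all $x\in X$, which is exactly the statement $\bar y\notin (\bar L-F_1-F_2)(X)-\inte K$, i.e. $\bar y\in Y\setminus[\{0_Y\}+(\bar L-F_1-F_2)(X)-\inte K]$. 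This is precisely the hypothesis of Basic Lemma 1 (Lemma \ref{exepi}) with $M=\{0_Y\}$. Invoking that lemma — which is where the convexity of $F_1,F_2$ and the regularity condition $(C_0)$ get consumed — produces a splitting $L_1+L_2=\bar L$ with $\wsup[(L_1-F_1)(X)+(L_2-F_2)(X)]\neq\{+\infty\}$ and $\bar y\notin (L_1-F_1)(X)+(L_2-F_2)(X)-\inte K$.

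From this output I would reconstruct a point of $\exepi F_1^\ast\boxplus\exepi F_2^\ast$ whose image under $\Psi$ contains $\bar y$. Setting $U_i:=F_i^\ast(L_i)=\wsup(L_i-F_i)(X)$, the finiteness of the joint $\wsup$ should force each $U_i\in\mathcal{P}_p(Y)$ so that $(L_i,U_i)\in\exepi F_i^\ast$ (using $F_i^\ast(L_i)\preccurlyeq_K U_i$ trivially by reflexivity, via Proposition \ref{prop_4gg}(ii)). Then $(L_1+L_2,U_1\uplus U_2)=(\bar L,\wsup(U_1+U_2))\in\exepi F_1^\ast\boxplus\exepi F_2^\ast$, and Proposition \ref{pro_decomp}(vi) collapses $\wsup(U_1+U_2)=\wsup((L_1-F_1)(X)+(L_2-F_2)(X))$. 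The condition $\bar y\notin(L_1-F_1)(X)+(L_2-F_2)(X)-\inte K$ together with the decomposition of $Y$ in Proposition \ref{pro_decomp}(iv) then places $\bar y$ in $\wsup(\cdots)+K = U_1\uplus U_2 + K$, and since $\Psi(\bar L,U_1\uplus U_2)=\{\bar L\}\times(U_1\uplus U_2)$, I expect to land $\bar y$ in the $\Psi$-image after absorbing the $+K$ along the lines of Proposition \ref{rel_epi}. The bookkeeping between ``$\notin\,\cdot-\inte K$'' and membership in $\wsup(\cdot)$ is the delicate point, handled by part (v) of Proposition \ref{pro_decomp}.

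For part (ii), I would apply part (i) twice. Lemma \ref{pro:3.3_nwww}(iii) already gives one inclusion $\Psi(\exepi F_1^\ast\boxplus\exepi F_2^\ast\boxplus\exepi F_3^\ast)\subset\Psi(\exepi(F_1+F_2)^\ast\boxplus\exepi F_3^\ast)$. For the reverse, the idea is that by associativity of $\boxplus$ and the monotonicity property \eqref{eqccc}, I can replace the block $\exepi(F_1+F_2)^\ast$ by $\exepi F_1^\ast\boxplus\exepi F_2^\ast$ after applying $\Psi$; the substitution is legitimized by part (i), which equates these blocks once $\Psi$ (or the generating $\boxplus$-structure through $\epi$) is taken into account. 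I would formalize this by showing both sides have the same $\Psi$-image through a $\boxplus$ with $\exepi F_3^\ast$, reducing everything to part (i) applied to the pair $(F_1,F_2)$.

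Finally, part (iii) reassembles (i) and (ii): under either $(C_0')$ or $(C_0'')$, the pair to which Basic Lemma 1 applies changes, but in both cases part (i) applies to a suitable grouping — to $(F_1+F_2,\,F_3)$ under $(C_0')$, and to $(F_3,\,F_1+F_2)$ (using commutativity of $\boxplus$) under $(C_0'')$ after noting continuity of $F_1,F_2$ at a common point yields continuity of $F_1+F_2$ there. This gives $\epi(F_1+F_2+F_3)^\ast=\Psi(\exepi(F_1+F_2)^\ast\boxplus\exepi F_3^\ast)$, the first equality in \eqref{eqthm31}; the second equality is then exactly part (ii). The main obstacle I anticipate is the reverse inclusion in part (i): converting the single non-membership $\bar y\notin(L_1-F_1)(X)+(L_2-F_2)(X)-\inte K$ into genuine membership of $\bar y$ in $(U_1\uplus U_2)+K$ requires care, because a priori $\bar y$ could sit in $\wsup(U_1+U_2)$ itself rather than strictly above it, and one must check that both cases are covered by the three-way decomposition of $Y$ and that the finiteness clause from Basic Lemma 1 is genuinely needed to exclude $U_1\uplus U_2=\{+\infty_Y\}$.
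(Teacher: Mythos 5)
Your treatment of part (i) follows the paper's route and is essentially sound: one inclusion is Lemma \ref{pro:3.3_nwww}(ii), and the reverse comes from \eqref{epiF-star} plus Basic Lemma~1 with $M=\{0_Y\}$. The one step you leave vague --- passing from $\bar y\in (U_1\uplus U_2)+K$ to actual membership of $\bar y$ in the second component of some element of $\exepi F_1^\ast\boxplus\exepi F_2^\ast$ --- is not handled by Proposition \ref{rel_epi} (whose trick, replacing $U$ by $\bar y+\bd K$, would destroy the required decomposition into two factors). The correct tool is Proposition \ref{prop_1ab}(vi): it lets you shift one factor, replacing $(L_2,F_2^\ast(L_2))$ by $(L_2,\bar V)$ with $F_2^\ast(L_2)\preccurlyeq_K\bar V$ so that $\bar y\in F_1^\ast(L_1)\uplus\bar V$. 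With that made explicit, (i) is complete, and your part (iii) is exactly the paper's argument.

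Part (ii) is where there is a genuine gap. You propose to ``replace the block $\exepi(F_1+F_2)^\ast$ by $\exepi F_1^\ast\boxplus\exepi F_2^\ast$ after applying $\Psi$, legitimized by part (i).'' This substitution is not a formal consequence of (i): the map $\Psi$ is far from injective, and $\Psi(\mathscr{M})=\Psi(\mathscr{N})$ does not imply $\Psi(\mathscr{M}\boxplus\mathscr{Q})=\Psi(\mathscr{N}\boxplus\mathscr{Q})$. Concretely, given $(\tilde L,\tilde U)\in\exepi(F_1+F_2)^\ast$ and $\bar y\in\tilde U\uplus U_3$, part (i) only tells you that each individual $u\in\tilde U$ lies in some $U_1^u\uplus U_2^u$ for a splitting $L_1^u+L_2^u=\tilde L$ \emph{depending on $u$}; since $\tilde U\uplus U_3=\wsup(\tilde U+U_3)$ is not a union of the corresponding objects over $u\in\tilde U$, no single splitting of $\tilde L$ is produced. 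The paper's proof of (ii) instead rewrites the hypothesis as
$\bar y\notin(\tilde L-F_1-F_2)(X)+(L_3-F_3)(\dom F_3)-\inte K$
and invokes Basic Lemma~1 a \emph{second} time, now with the nontrivial set $M=(L_3-F_3)(\dom F_3)$ --- this is precisely why Lemma \ref{exepi} is stated for an arbitrary nonempty $M\subset Y$ rather than only for $M=\{0_Y\}$, the only case your proposal ever uses. Without this second, genuinely analytic application of the separation argument, the reverse inclusion in (ii) (and hence the second equality in \eqref{eqthm31}) is not established.
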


\begin{proof} (see Appendix C). 
\end{proof}

%\red{++++HET++++++++++++}

\section{Representations of the Epigraphs of Conjugate Mappings   }

Let $X,Y$, $Z$  be  locally convex topological vector spaces,   and  $K, S$ be  non-empty convex cones in $Y$ and $Z$, respectively, with  $\inte K\ne \emptyset$.   
{ Let  further that $(F;G,C)$ be the triple  defined the problem (VP) as in Section 1 with the assumption that $A\cap \dom F\neq\emptyset$, where    $A:=C\cap G^{-1}(-S)$ is the feasible set of (VP). }

{  In this section, we  establish the main results of the paper:  representations of  the epigraph of the conjugate of the mapping $F + I_A$, 
$\epi (F+I_A)^\ast$, in terms of  epigraphs of the conjugate mappings  of its members  $F$, $G$,  and $I_C$.}

%\subsection{\blue{Qualifying sets and relations   }}
Concerning  the triple $(F; G, C)$,  let us   set 
\begin{align*}
\mathfrak{A}_1&:= \bigcup_{T\in\L_+(S,K)}\exepi (F+I_C+T\circ G)^{\ast },\\
\mathfrak{A}_2&:= \exepi F^*\boxplus \bigcup_{T\in \L_+(S,K)}\exepi ({I_{C}}+T\circ G)^{\ast }, \\
\mathfrak{A}_3&:=\exepi F^* \boxplus\exepi {I_{C}^*}\boxplus \bigcup_{T\in\L_+(S,K)}\exepi( T\circ G)^*,
\end{align*}
{and consider the following  sets:}
\begin{equation} \label{eq21}
\A_i=\Psi (\mathfrak{A}_i),    \quad i=1,2,3,
\end{equation}
where $\Psi$ is the mapping defined in  \eqref{Psi}.  {It is worth observing that   the  sets $\A_1$  is  exactly the qualifying set 
  proposed recently in \cite{DGLL17,DGLMJOTA16} {(see Lemma \ref{rem:3b} below).} Moreover, in the special case where $Y = \RR$, and $K = \RR_+$, the sets $\A_i$, $i = 1, 2, 3$ go back to the well-known sets appeared in convex scalar optimization theory       (see Remark \ref{rem73}  below).}

{\begin{lemma}
\label{rem:3b}
It hold
\begin{gather}
 \epi (F+I_{A})^{\ast}  \ \ \   \supset\bigcup_{T \in \mathcal{L}_+(S, K)}
\epi(F+I_{C}+T\circ G)^{\ast} \ \ \ \textrm{and}      \label{eqLem41}   \\ 
 \A_1\  = \bigcup_{T\in\L_+(S,K)}\!\epi (F+I_C+T\circ G)^{\ast }. \label{eqLem41b} 
\end{gather}
\end{lemma}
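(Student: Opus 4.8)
The plan is to prove the two displayed relations separately: \eqref{eqLem41b} is a purely structural identity, while \eqref{eqLem41} requires a pointwise argument through the characterization \eqref{epiF-star}. For \eqref{eqLem41b} I would begin from the definition $\A_1=\Psi(\mathfrak{A}_1)$ with $\mathfrak{A}_1=\bigcup_{T\in\L_+(S,K)}\exepi(F+I_C+T\circ G)^\ast$ and use the commutation of $\Psi$ with arbitrary unions, equation \eqref{Psi-cup}, to write $\A_1=\bigcup_{T\in\L_+(S,K)}\Psi\big(\exepi(F+I_C+T\circ G)^\ast\big)$. Each mapping $H:=F+I_C+T\circ G$ is proper: the standing hypothesis $A\cap\dom F\ne\emptyset$ supplies a point of $\dom H$ (every $x\in A\cap\dom F$ satisfies $x\in C$, $G(x)\in-S\subset Z$ and $F(x)\in Y$, so $H(x)\in Y$), and since all three summands take values in $Y\cup\{+\infty_Y\}$ the value $-\infty_Y$ is never attained. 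Proposition~\ref{rel_epi} then gives $\Psi(\exepi H^\ast)=\epi H^\ast$ term by term, which is exactly \eqref{eqLem41b}.

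For \eqref{eqLem41} I would prove the stronger statement that $\epi(F+I_C+T\circ G)^\ast\subset\epi(F+I_A)^\ast$ for each fixed $T\in\L_+(S,K)$, and then take the union over $T$. Fixing $(L,y)$ in the left-hand set, \eqref{epiF-star} reads $(F+I_C+T\circ G)(x)-L(x)+y\notin-\inte K$ for all $x\in X$, and I must establish the same relation with $F+I_C+T\circ G$ replaced by $F+I_A$. When $x\notin A\cap\dom F$ one has $(F+I_A)(x)=+\infty_Y$ and the condition holds trivially. For $x\in A\cap\dom F$ one has $x\in C$ with $G(x)\in-S$, so $(F+I_A)(x)=F(x)\in Y$ and $(F+I_C+T\circ G)(x)=F(x)+T(G(x))$ with $T(G(x))\in T(-S)\subset-K$. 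Arguing by contradiction, were $F(x)-L(x)+y\in-\inte K$, then adding $T(G(x))\in-K$ and invoking $K+\inte K=\inte K$ from \eqref{int-plus} would force $F(x)+T(G(x))-L(x)+y\in-\inte K$, contradicting the assumption on $(L,y)$.

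The only genuinely delicate step is this last absorption in \eqref{eqLem41}: one must keep careful track of the infinite values dictated by the conventions \eqref{2.4} (splitting on whether $x$ lies in $A\cap\dom F$, and noting that $x\in A$ forces $G(x)\ne+\infty_Z$ so that $T(G(x))$ is a genuine element of $Y$), and then exploit the cone identity $-\inte K-K=-\inte K$ to move the perturbation $T(G(x))$ out of the forbidden set $-\inte K$. Everything else reduces to bookkeeping with the definition of $\L_+(S,K)$, of $\Psi$, and of the indicator maps $I_A$, $I_C$.
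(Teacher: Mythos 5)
Your proposal is correct. For \eqref{eqLem41b} your argument coincides with the paper's: both pass from $\A_1=\Psi(\mathfrak{A}_1)$ through the commutation of $\Psi$ with unions \eqref{Psi-cup} and then apply Proposition~\ref{rel_epi} termwise (your explicit check that each $F+I_C+T\circ G$ is proper, so that Proposition~\ref{rel_epi} applies, is a detail the paper leaves implicit but is worth having). For \eqref{eqLem41} the paper does not argue at all --- it simply cites \cite[Lemma 4.1]{DGLMJOTA16} --- whereas you supply a self-contained proof via the characterization \eqref{epiF-star}, splitting on whether $x\in A\cap\dom F$ and absorbing $T(G(x))\in -K$ into $-\inte K$ using \eqref{int-plus}. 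That argument is sound: the only points requiring care are exactly the ones you flag (the $+\infty_Y$ conventions off $A\cap\dom F$, and the finiteness of $G(x)$ on $A$), and you handle both. So the net difference is that your write-up is more elementary and self-contained where the paper outsources the inclusion to an external reference; mathematically the content is the same.
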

}

{\begin{proof}  Note that \eqref{eqLem41} is  \cite[Lemma 4.1]{DGLMJOTA16}. For the proof of \eqref{eqLem41b}, 
observe firstly that  by the definitions of $\A_1$, $\mathfrak{A}_1, $   and by \eqref{Psi-cup},   one has 
\begin{align*}
\A_1&= \Psi\Big[\!\!\!\bigcup_{T\in\L_+(S,K)}\exepi (F+I_C+T\circ G)^{\ast }\Big]      = \!\!\! \bigcup_{T\in\L_+(S,K)}\!\!\!\Psi[\exepi (F+I_C+T\circ G)^{\ast }] \\
          &=  \bigcup_{T\in\L_+(S,K)}\!\epi (F+I_C+T\circ G)^{\ast }
\end{align*}
(the last equality follows from Proposition \ref{rel_epi}), which means that   \eqref{eqLem41b} is proved. 
 \end{proof}
}

\begin{proposition} \label{pro:9}
The next inclusions hold:  
$
\A_3 \subset  \A_2  \subset  \A_1  \subset \epi (F+I_A)^\ast . 
$
\end{proposition}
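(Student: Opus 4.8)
The plan is to prove the chain of inclusions $\A_3 \subset \A_2 \subset \A_1 \subset \epi(F+I_A)^\ast$ from right to left, establishing each link separately. The rightmost inclusion $\A_1 \subset \epi(F+I_A)^\ast$ is essentially free: by Lemma \ref{rem:3b}, equation \eqref{eqLem41b} identifies $\A_1$ with $\bigcup_{T\in\L_+(S,K)}\epi(F+I_C+T\circ G)^\ast$, and equation \eqref{eqLem41} states exactly that this union is contained in $\epi(F+I_A)^\ast$. So the first link follows by simply chaining these two facts.

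For the two remaining inclusions, the idea is to apply the $\nplus$-sum monotonicity property \eqref{eqccc} together with the subadditivity of extended epigraphs under the $\nplus$-sum (Lemma \ref{pro:3.3_nwww}(i)) and the monotonicity of $\Psi$ \eqref{eqbbb}. Concretely, for $\A_2 \subset \A_1$, I would fix $T\in\L_+(S,K)$ and write $F+I_C+T\circ G = F + (I_C + T\circ G)$, then apply Lemma \ref{pro:3.3_nwww}(i) to the pair $F_1 = F$, $F_2 = I_C + T\circ G$ to obtain $\exepi F^\ast \boxplus \exepi(I_C+T\circ G)^\ast \subset \exepi(F+I_C+T\circ G)^\ast$. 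Taking the union over $T\in\L_+(S,K)$ and using \eqref{eqplusunion} to pull the union inside the $\boxplus$-sum yields $\mathfrak{A}_2 \subset \mathfrak{A}_1$; applying $\Psi$ and \eqref{eqbbb} then gives $\A_2 \subset \A_1$.

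The inclusion $\A_3 \subset \A_2$ is analogous but requires splitting $I_C + T\circ G = I_C + (T\circ G)$. Here I would apply Lemma \ref{pro:3.3_nwww}(i) to $F_1 = I_C$, $F_2 = T\circ G$, obtaining $\exepi I_C^\ast \boxplus \exepi(T\circ G)^\ast \subset \exepi(I_C + T\circ G)^\ast$ for each fixed $T$. Pre-composing (i.e., $\boxplus$-adding $\exepi F^\ast$ on the left) and invoking monotonicity of the $\boxplus$-sum \eqref{eqccc} gives $\exepi F^\ast \boxplus \exepi I_C^\ast \boxplus \exepi(T\circ G)^\ast \subset \exepi F^\ast \boxplus \exepi(I_C+T\circ G)^\ast$. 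Taking the union over $T$ and again using \eqref{eqplusunion} to distribute the union across the $\boxplus$-sum produces $\mathfrak{A}_3 \subset \mathfrak{A}_2$, and applying $\Psi$ finishes the link.

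The main subtlety I anticipate is purely bookkeeping rather than conceptual: one must be careful that the union over $T\in\L_+(S,K)$ genuinely commutes with the $\boxplus$-sum, which is exactly the content of \eqref{eqplusunion}, and that the associativity and commutativity of $\boxplus$ (noted after Definition \ref{def_exepi}) are used correctly when regrouping the three-fold sum in $\mathfrak{A}_3$. No qualification condition such as $(C_0)$ is needed here, since only the \emph{inclusion} direction of Lemma \ref{pro:3.3_nwww}(i) is invoked (the reverse inclusions, which do require $(C_0)$ via Lemma \ref{thm:3.1aa}, are not used). Thus the entire proof reduces to assembling Lemma \ref{pro:3.3_nwww}(i), Lemma \ref{rem:3b}, and the elementary monotonicity/distributivity properties \eqref{eqbbb}, \eqref{eqccc}, \eqref{eqplusunion} in the right order.
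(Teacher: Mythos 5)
Your proof is correct and follows essentially the same route as the paper: the paper likewise obtains $\mathfrak{A}_3\subset\mathfrak{A}_2\subset\mathfrak{A}_1$ from Lemma \ref{pro:3.3_nwww}(i), passes to $\A_3\subset\A_2\subset\A_1$ via the monotonicity of $\Psi$ in \eqref{eqbbb}, and concludes $\A_1\subset\epi(F+I_A)^\ast$ from Lemma \ref{rem:3b}. You merely make explicit the bookkeeping (the choices of $F_1,F_2$, the use of \eqref{eqccc} and \eqref{eqplusunion}, and the observation that no condition $(C_0)$ is needed) that the paper leaves implicit.
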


\begin{proof}
 {{It follows from Lemma \ref{pro:3.3_nwww}(i)  that  $\mathfrak{A}_3\subset \mathfrak{A}_2\subset \mathfrak{A}_1$, and taking   \eqref{eqbbb} into account, we get    $\A_3\subset \A_2\subset \A_1$. }}      On the other hand, by Lemma    \ref{rem:3b},   
\begin{equation} \label{eqprop41}
\A_1 \ \   =   \bigcup_{T \in \mathcal{L}_+(S, K)}
\epi (F+I_{C}+T\circ G)^{\ast}  \ \ \subset \ \  \epi (F+I_{A})^{\ast},  
\end{equation} 
and the proof is complete. 
\end{proof}

%\subsection{Representations of Epigraph of  $\epi (F+I_A)^\ast$}

{In the rest of this section,  we assume  that {\it $F$ is $K$-convex,  $G$ is $S$-convex, and  that $C$ is a convex subset of $X$.}}
Consider the following regularity  conditions:

\medskip 

\noindent 
\begin{tabular}{c  c}
$(C_1)$ &  
\begin{minipage}{0.9\textwidth}
$\exists x_1 \in  C\cap \dom F: G(x_1)\in -\inte S$
\end{minipage}
\end{tabular}

\noindent
\begin{tabular}{c  c}
$(C_2)$ &  
\begin{minipage}{0.9\textwidth}
$\exists  x_2\in  C\cap\dom G:  F\textrm{ is continuous  at } x_2$.
\end{minipage}
\end{tabular}

\noindent
\begin{tabular}{c  c}
$(C_3)$ &  
\begin{minipage}{0.9\textwidth}
$\exists  x_3\in  C :  G\textrm{ is continuous  at } x_3$.
\end{minipage}
\end{tabular}

%It worth noting that, in the case when $Y=\mathbb{R}$ and $K=\mathbb{R}_+$, the condition $(C_1)$ \red{ is nothing else but the well-known Slater condition   (known  as    $(RC_1^{FL})$ condition  in \cite{Bot2010}). }

\begin{theorem}[Representation of $\epi (F+I_A)^\ast$] The next assertions hold: 
\label{thm:4.1f}\\
\indent {\bf (a)}  If $(C_1)$ holds then 
$\epi (F+I_A)^*=\A_1, $\\
\indent {\bf (b)} If $(C_1)$ and $(C_2)$ {hold} then 
%\begin{equation*} \label{eqthm42a}
$\epi (F+I_A)^\ast=\A_2, $\\
%\end{equation*}
\indent {\bf (c)} If $(C_1)$, $(C_2)$, and $(C_3)$ {hold} then 
%\begin{equation*} 
$\epi (F+I_A)^*=\A_3.$
%\end{equation*}
\end{theorem}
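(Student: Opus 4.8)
The plan is to prove the three equalities by establishing the reverse inclusion to the chain already given in Proposition~\ref{pro:9}, namely $\epi(F+I_A)^\ast\subset\A_i$, and then combine it with $\A_i\subset\epi(F+I_A)^\ast$ to conclude. Since Proposition~\ref{pro:9} already furnishes $\A_3\subset\A_2\subset\A_1\subset\epi(F+I_A)^\ast$, part~(a) is the crux: once $\epi(F+I_A)^\ast\subset\A_1$ is shown, the hypotheses $(C_2)$, $(C_3)$ in parts~(b), (c) serve to upgrade $\A_1$ to $\A_2$ and $\A_2$ to $\A_3$ by means of Basic Lemma~2 (Lemma~\ref{thm:3.1aa}).

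For part~(a), first I would fix a pair $(L,y)\in\epi(F+I_A)^\ast$, so by the characterization \eqref{epiF-star} we have $(F+I_A)(x)-L(x)+y\notin-\inte K$ for all $x\in X$; since $I_A=I_C+I_{G^{-1}(-S)}$ on the relevant domain, this says $\bar y:=y$ is not weakly dominated over the feasible set $A=C\cap G^{-1}(-S)$. The heart of the argument is a \emph{vector Lagrangian/separation step}: I would use the constraint qualification $(C_1)$ (the Slater-type condition $G(x_1)\in-\inte S$ with $x_1\in C\cap\dom F$) together with the $K$-convexity of $F$, the $S$-convexity of $G$, and convexity of $C$ to produce an operator $\bar T\in\L_+(S,K)$ such that the same non-domination persists after replacing the hard constraint $I_{G^{-1}(-S)}$ by the penalization $\bar T\circ G$. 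Concretely, I expect to show $(F+I_C+\bar T\circ G)(x)-L(x)+y\notin-\inte K$ for all $x\in X$, i.e.\ $(L,y)\in\epi(F+I_C+\bar T\circ G)^\ast$; by \eqref{eqLem41b} of Lemma~\ref{rem:3b} this lands $(L,y)$ in $\A_1$, completing (a).

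For parts~(b) and~(c) the idea is purely to split the conjugate of the composite mapping. Having part~(a), I would write $\A_1=\bigcup_{T}\epi(F+I_C+T\circ G)^\ast$ and then, for each fixed $T\in\L_+(S,K)$, apply Lemma~\ref{thm:3.1aa}(i) to the two mappings $F_1:=F$ and $F_2:=I_C+T\circ G$. The condition $(C_2)$ guarantees the required continuity hypothesis $(C_0)$ (there is $x_2\in C\cap\dom G$ at which $F$ is continuous, hence $x_2\in\dom(I_C+T\circ G)$ and $F$ is continuous there), so $\epi(F+I_C+T\circ G)^\ast=\Psi(\exepi F^\ast\boxplus\exepi(I_C+T\circ G)^\ast)$. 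Taking the union over $T$ and using \eqref{eqplusunion} and \eqref{Psi-cup} to pull the union inside the $\boxplus$ and $\Psi$ yields $\A_1=\A_2$, giving (b). For (c), I would further split $I_C+T\circ G$ via Lemma~\ref{thm:3.1aa}, now invoking $(C_3)$ to verify the analogue of $(C_0)$ for $I_C$ and $T\circ G$ (continuity of $G$, hence of $T\circ G$, at $x_3\in C$); the three-mapping identity \eqref{eqthm31}, again combined with \eqref{eqplusunion}--\eqref{Psi-cup}, collapses $\A_2$ to $\A_3$.

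The main obstacle I anticipate is the separation/Lagrangian step inside part~(a): extracting the positive operator $\bar T\in\L_+(S,K)$ that converts the geometric feasibility constraint into an additive penalization, in the vector-valued setting where the ordering is only the \emph{weak} order $<_K$ and the ``values'' are subsets handled through $\wsup$, $\winf$, and $\preccurlyeq_K$. In the scalar case this is the classical Slater-based Lagrange multiplier argument, but here one must produce an \emph{operator} rather than a scalar multiplier and control it through the weak-infimum machinery; I expect the proof to route this through a vector Farkas-type or Hahn--Banach separation applied to $\epi(F+I_C)^\ast$ and the cone of operators, exploiting $(C_1)$ precisely to ensure the separating operator is genuinely $S$-to-$K$ positive and not degenerate. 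The remaining work in (b) and (c) should be routine bookkeeping with the already-established Basic Lemma~2 and the union-commutation identities \eqref{eqplusunion}, \eqref{Psi-cup}.
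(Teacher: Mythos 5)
Your overall architecture matches the paper's: parts (b) and (c) are obtained exactly as in the paper, by starting from (a), applying Basic Lemma~2 to each $\epi(F+I_C+T\circ G)^\ast$, and commuting the union over $T$ past $\boxplus$ and $\Psi$ via \eqref{eqplusunion} and \eqref{Psi-cup}. Two small remarks on that half. First, you assign $F_1:=F$ and $F_2:=I_C+T\circ G$, but $(C_0)$ in that order would demand continuity of $I_C+T\circ G$ at a point of $\dom F$, which generically fails; the verification you actually write in parentheses (a point of $\dom(I_C+T\circ G)$ at which $F$ is continuous) is the correct one, i.e.\ the roles must be $F_1=I_C+T\circ G$, $F_2=F$ as in the paper. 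Since the conclusion of Lemma~\ref{thm:3.1aa}(i) is symmetric, this is only a labelling slip. Second, for (c) the paper uses Lemma~\ref{thm:3.1aa}(ii) (which needs only $(C_0)$ for $I_C$ and $T\circ G$, supplied by $(C_3)$), whereas you invoke the three-mapping identity \eqref{eqthm31}, i.e.\ part (iii), which additionally requires $(C_0')$ or $(C_0'')$; this still works because $(C_0')$ for the triple $(I_C,\,T\circ G,\,F)$ is precisely $(C_2)$, but it is worth noting that the weaker part (ii) already suffices once \eqref{eq:33fbis} is in hand.

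The one substantive gap is part (a). The paper does not prove the inclusion $\epi(F+I_A)^\ast\subset\A_1$ at all: it quotes \cite[Theorem 4.3]{DGLL17} and combines it with Lemma~\ref{rem:3b} (equation \eqref{eqLem41b}). What you propose to do instead is re-derive that external theorem: given $(L,y)\in\epi(F+I_A)^\ast$, produce $\bar T\in\L_+(S,K)$ with $(L,y)\in\epi(F+I_C+\bar T\circ G)^\ast$ under the Slater condition $(C_1)$. You correctly identify this as the crux and as the only place where $(C_1)$, the $S$-convexity of $G$ and the convexity of $C$ enter, but you leave the separation entirely unexecuted (``I expect to show\dots'', ``I expect the proof to route this through\dots''), and your suggested target for the separation ($\epi(F+I_C)^\ast$ against ``the cone of operators'') is not obviously the right object; the standard route is a separation in $Z\times Y$ (or a scalarized Farkas argument) applied to a set built from $(G,\,F-L+y)$ over $C$, from which the operator $\bar T$ is assembled. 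As written, part (a) of your proposal is a plan rather than a proof, and since (b) and (c) are reductions to (a), the whole theorem rests on that missing step — or, equivalently, on explicitly invoking the known result \cite[Theorem 4.3]{DGLL17} as the paper does.
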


\begin{proof} %\red{\bf (Long, please change the proof).} 
{{\bf (a)}  follows from Lemma \ref{rem:3b}   and    \cite[Theorems 4.3]{DGLL17}.  }

{\bf (b)} 
 Firstly, if  $(C_1)$ holds then by  {\bf (a)}, one has 
\begin{align}
\epi (F+I_A)^\ast&=\A_1= \bigcup_{T\in\L_+(S,K)}\epi (F+I_C+T\circ G)^{\ast }. \label{eq:33f}
\end{align}
{If further  that $(C_2)$ holds,  then $(C_0)$   holds with $F_1=I_C+T\circ G$ and $F_2=F$. It now follows from  Basic Lemma 2 (Lemma \ref{thm:3.1aa}(i))  that,   for each $T\in \L_+(S,K)$,   }
\begin{equation*}
\epi (F+I_C+T\circ G)^{\ast }=\Psi\big[\exepi F^\ast \boxplus \exepi(I_C+T\circ G)^{\ast }\big], 
\end{equation*}
{which, together with    \eqref{eq:33f}, gives us }
 \begin{eqnarray*} 
 \epi (F+I_A)^\ast&=& \bigcup_{T\in\L_+(S,K)}\epi (F+I_C+T\circ G)^{\ast }\\
 &=&  \bigcup_{T\in\L_+(S,K)}   \Psi\big[\exepi F^\ast \boxplus \exepi(I_C+T\circ G)^{\ast }\big] \\
 &=&      \Psi\Big[\exepi F^\ast \boxplus    \bigcup_{T\in\L_+(S,K)}  \exepi(I_C+T\circ G)^{\ast }\Big]  =  \A_2.
 \end{eqnarray*}
{(Note that the third equality comes from  \eqref{eqplusunion}).   }

{\bf (c)}  Firstly, {if}  $(C_1)$ and  $(C_2)$ hold,  we get from   {\bf (b)}
\begin{align}
\epi (F+I_A)^\ast&=\A_2= \bigcup_{T\in\L_+(S,K)}\Psi\Big(\exepi F^\ast \boxplus \exepi(I_C+T\circ G)^{\ast }\Big). \label{eq:33fbis}
\end{align}
{Now, for each $T\in\L_+(S,K)$, if $(C_3)$  holds then $(C_0)$ holds with $I_C$ and $T\circ G$ as well. Thus,  it follows from Lemma  \ref{thm:3.1aa}(ii) that}
 \begin{equation}
\label{eq:37f}
\Psi\left(\exepi F^\ast \boxplus \exepi(I_C+T\circ G)^{\ast }\right)=\Psi\Big(\exepi F^\ast \boxplus \exepi I_C^{\ast }\boxplus \exepi (T\circ G)^\ast\Big).
\end{equation}
{ Combining   \eqref{eq:33fbis} and \eqref{eq:37f}, one arrives at}
\begin{align*}
\epi (F+I_A)^\ast&= \bigcup_{T\in\L_+(S,K)}\Psi\Big(\exepi F^\ast \boxplus \exepi I_C^{\ast }\boxplus \exepi (T\circ G)^\ast\Big)\\
&=\Psi\left(\bigcup_{T\in\L_+(S,K)}[\exepi F^\ast \boxplus \exepi I_C^{\ast }\boxplus \exepi (T\circ G)^\ast]\right) \quad \textrm{(by \eqref{Psi-cup})}\\
&=\Psi\left(\exepi F^\ast \boxplus \exepi I_C^{\ast} \boxplus \bigcup_{T\in\L_+(S,K)}\exepi (T\circ G)^\ast\right) 
\quad \textrm{(by \eqref{eqplusunion})}\\
&=\Psi(\mathfrak{A}_3)=\A_3,
\end{align*}
{which completes the proof.}
\end{proof} 

\begin{remark} \label{rem73}
It is worth observing that when turning back to  the case {where} $Y=\mathbb{R}$ and $K=\mathbb{R}_+$,    { the cone  $\L_+(S,K)$  reduces to $S^+$,}  $\mathcal{P}_p(Y)={\mathbb{R}}$,  $K$-extended epigraph of  conjugate mappings  collapse to  usual  epigraphs of extended real-valued functions,  
 the $\boxplus$-sum now is the usual sum of two subsets in $X^*\times {\mathbb{R}}$ while the mapping $\Psi$   is nothing else but the identical mapping of $X^*\times {\mathbb{R}}$  (see  Remark \ref {rem_2dd}).  As a result,  (in this special case), {under the light of  Theorem   \ref{thm:4.1f}, } (and under some suitable regularity conditions), the sets {$\A_i$,}  $i = 1, 2, 3$,  go  back to the  known sets that represent $\epi (F+I_A)^* $  in the theory  of (scalar) convex optimization (see,    \cite{Bot2010,BGW09,DMVV-Siopt,DNV-08,DVN-08,Jeya 1,JSDL05}, 
     and references therein), and     as usual, in this case ($Y=\mathbb{R}$), we will use the lowercase letters for the extended real-valued functions (e.g., $f$,   $i_C, i_{-S}$ instead of $F$, $I_C$, $I_{-S}$):
  \begin{eqnarray*} 
 \A^\prime_1&:=&  \bigcup_{\lambda \in S^+} \epi (f+i_C+\lambda G)^\ast   = \epi (f+i_A)^*,\\
\A^\prime_2 &:=&  \epi f^\ast+\bigcup_{\lambda \in S^+} \epi (i_C+\lambda G)^\ast = \epi (f+i_A)^*, \ \ \textrm{and}  \\
\A^\prime_3  &:=& \epi f^\ast+\epi {i_C^\ast}+ \bigcup_{\lambda \in S^+} \epi (\lambda G)^\ast  = \epi (f+i_A)^*.
\end{eqnarray*} 

\end{remark} 
In the next two sections, we give some applications of the representations established in this section, firstly to establish characterizations of some equivalent forms of vector inequalities (also called vector Farkas-type results), and secondly to introduce variant forms of duality problems (called Lagrange and Fenchel-Lagrange dual problems) for vector optimization problems and  establish dual strong or stable dual strong duality results for these primal-dual pairs of problems. 

\section{{Characterizations of Vector Inequalities: Vector Farkas Lemmas }} \label{sec:5}

{Let $(F;G,C)$ be the triple that  defines the problem (VP) with its feasible set $A$ as in Section 4. For each $(L,y)\in \L(X,Y)\times Y$, we concern the vector inequality of the form:
\begin{equation*} %\label{ineq1} 
F(x) - L(x)  \not<_K  - y, \ \ \   \forall x \in A, 
\end{equation*} 
which is equivalent to the inclusion: }

\begin{tabular}{c| c}
\begin{minipage}[t]{0.045\textwidth}
$( \alpha)$ 
\end{minipage}
&  
\begin{minipage}[t]{0.9\textwidth}
$x\in C, \; G(x)\in -S\ \  \Longrightarrow \ \ F(x)-L(x)+y\notin -\inte K$.
\end{minipage}
\end{tabular}

\noindent{We now  provide some necessary and sufficient conditions for $(\alpha)$ to hold. Each of such pair of equivalence  is often called a version of  the {\it vector Farkas lemma}. 
A version of vector Farkas lemma  is called {\it stable} if such an equivalence pair holds for  every $(L, y ) \in \L(X, Y)\times Y$. 
We start firstly with the general case. }

\subsection{General Vector  Inequalities}  

{Consider the 
 following  statements:  }

\medskip 
\noindent
\begin{tabular}{c |c}
\begin{minipage}[t]{0.045\textwidth}
$( \beta_1)$ 
\end{minipage}
 &  
\begin{minipage}[t]{0.9\textwidth}
$\exists T\in\L_+(S,K): (F+I_C+T\circ G)^\ast(L) \subset Y \textrm{ and }\\ y-(F+I_C+T\circ G)^\ast(L)\notin -\inte K$,
\end{minipage}
\end{tabular}

\medskip 

\noindent
\begin{tabular}{c |c}
\begin{minipage}[t]{0.045\textwidth}
$( \beta_2)$ 
\end{minipage}
 &  
\begin{minipage}[t]{0.9\textwidth}
$\exists L' \in \L(X,Y),\; \exists T\in \L_+(S,K):
F^\ast(L')\uplus (I_C+T\circ G)^\ast(L-L')\subset Y
\textrm{ and }\\
y-F^\ast(L')-(I_C+T\circ G)^\ast(L-L')\notin -\inte K$,
\end{minipage}
\end{tabular}

\medskip 
\noindent
\begin{tabular}{c| c}
\begin{minipage}[t]{0.045\textwidth}
$( \beta_3)$ 
\end{minipage}
 &  
\begin{minipage}[t]{0.9\textwidth}
$\exists L',L''\in \L(X,Y),\; \exists T\in \L_+(S,K): F^\ast(L')\uplus I_C^\ast(L'')\uplus (T\circ G)^\ast(L-L'-L'')\subset Y$ \textrm{ and} 
$y-F^\ast(L')-I_C^\ast(L'')-(T\circ G)^\ast(L-L'-L'')\notin -\inte K$,
\end{minipage}
\end{tabular}

\begin{theorem}[{Characterizations of stable  vector Farkas lemma}]
\label{thm:1}
%Let $\emptyset\ne \mathcal{V}\subset \L(X,Y)$.
For $i=1,2,3$, consider the  following statements:  

\noindent $({\bf a}_i)$\; $ \epi  (F+I_A)^\ast=\A_i,$

\noindent  $({\bf b}_i)$\;  $(\alpha)\Longleftrightarrow (\beta_i), \; \forall (L,y)\in \L(X,Y)\times Y$. 

\noindent Then, $[({\bf a}_i)   \Longleftrightarrow ({\bf b}_i)]$  for all $i=1,2,3$.
\end{theorem}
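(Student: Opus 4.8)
The plan is to reduce the asserted equivalence $[({\bf a}_i)\Longleftrightarrow({\bf b}_i)]$ to two ``translation'' characterizations, one for the left-hand set $\epi(F+I_A)^\ast$ and one for the right-hand set $\A_i$, each valid for \emph{every} $(L,y)\in\L(X,Y)\times Y$. Concretely, I would establish
$$(\alpha)\ \Longleftrightarrow\ (L,y)\in\epi(F+I_A)^\ast \qquad\text{and}\qquad (\beta_i)\ \Longleftrightarrow\ (L,y)\in\A_i.$$
Granting these, the statement $({\bf b}_i)$ reads ``$(L,y)\in\epi(F+I_A)^\ast\Leftrightarrow(L,y)\in\A_i$ for all $(L,y)$''; since both $\epi(F+I_A)^\ast$ and $\A_i$ are subsets of $\L(X,Y)\times Y$, this is precisely the set equality $\epi(F+I_A)^\ast=\A_i$, i.e.\ $({\bf a}_i)$. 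Thus the whole theorem collapses onto the two characterizations and no separate treatment of the two implications is needed.

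For the first characterization I would apply \eqref{epiF-star} to the proper mapping $F+I_A$ (proper because $F$ is proper and $A\cap\dom F\neq\emptyset$): $(L,y)\in\epi(F+I_A)^\ast$ iff $(F+I_A)(x)-L(x)+y\notin-\inte K$ for all $x\in X$. For $x\notin A$ the left-hand side equals $+\infty_Y\notin-\inte K$, so the constraint is vacuous there; for $x\in A$ it becomes $F(x)-L(x)+y\notin-\inte K$. Since $x\in A$ means exactly $x\in C$ and $G(x)\in-S$, this is literally $(\alpha)$.

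The substance of the proof is the second characterization, done separately for each $i$. For $i=1$, \eqref{eqLem41b} gives $\A_1=\bigcup_T\epi(F+I_C+T\circ G)^\ast$; here I would note that for $y\in Y$ the membership $(L,y)\in\epi(F+I_C+T\circ G)^\ast$ forces $(F+I_C+T\circ G)^\ast(L)\subset Y$ (it can be neither $\{+\infty_Y\}$ nor $\{-\infty_Y\}$, by the sum conventions, the properness of $F+I_C+T\circ G$, and Proposition \ref{pro_nwwew}), and then the partition decomposition of $Y$ together with Proposition \ref{pro_decomp}(iii),(v) converts $y\in(F+I_C+T\circ G)^\ast(L)+K$ into the non-intersection clause $(y-(F+I_C+T\circ G)^\ast(L))\cap(-\inte K)=\emptyset$; this is exactly $(\beta_1)$. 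For $i=2,3$ I would first unwind $\A_i=\Psi(\mathfrak{A}_i)$ using the definitions of $\Psi$, $\nplus$ and $\uplus$, so that $(L,y)\in\A_i$ means there exist a splitting of $L$, an operator $T\in\L_+(S,K)$, and partition-style sets $U_j\in\mathcal{P}_p(Y)$ dominating the relevant conjugate values (in the $\preccurlyeq_K$ sense) with $y\in U_1\uplus U_2$ (resp.\ $y\in U_1\uplus U_2\uplus U_3$). The direction towards $(\beta_i)$ then uses monotonicity of $\uplus$ (Proposition \ref{prop_1ab}(v)), Lemma \ref{pro_6hh}(ii) and Proposition \ref{pro_decomp}(iii) to pass from $y\in U_1\uplus U_2$ to the non-intersection inequality written with the \emph{exact} conjugate values $F^\ast(L')$ and $(I_C+T\circ G)^\ast(L-L')$.

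The step I expect to be the main obstacle is the reverse passage, from $(\beta_i)$ back to genuine membership in $\A_i$: the non-intersection clause only yields $y\in\bigl(F^\ast(L')\uplus(I_C+T\circ G)^\ast(L-L')\bigr)+K$, i.e.\ membership ``up to $+K$'', whereas placing $(L,y)$ in $\A_i=\Psi(\mathfrak{A}_i)$ requires $y$ to lie \emph{exactly} in some WS-sum $U_1\uplus U_2$. I would resolve this precisely through Proposition \ref{prop_1ab}(vi), which produces a partition-style set $W'$ with $F^\ast(L')\preccurlyeq_K W'$ and $y\in W'\uplus(I_C+T\circ G)^\ast(L-L')$; taking $U_1=W'$ then realizes $(L,y)$ inside $\exepi F^\ast\nplus\bigcup_T\exepi(I_C+T\circ G)^\ast$, hence inside $\A_2$ after applying $\Psi$. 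For $i=3$ the same device applies after regrouping $U_1\uplus U_2\uplus U_3=U_1\uplus(U_2\uplus U_3)$ by associativity (Proposition \ref{prop_1ab}(iii)). Throughout, one must keep track that every conjugate value in play is proper (a subset of $Y$, never $\{\pm\infty_Y\}$), which is forced by the ``$\subset Y$'' clauses appearing in $(\beta_i)$ together with the properness of $F$ and of $I_C+T\circ G$.
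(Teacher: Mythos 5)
Your proposal is correct and follows essentially the same route as the paper: both reduce the theorem to the two pointwise characterizations $(\alpha)\Leftrightarrow(L,y)\in\epi(F+I_A)^\ast$ (via \eqref{epiF-star}) and $(\beta_i)\Leftrightarrow(L,y)\in\A_i$, and both handle the delicate ``$(\beta_i)\Rightarrow(L,y)\in\A_i$'' direction by invoking Proposition \ref{prop_1ab}(vi) to manufacture the partition-style set realizing exact membership in the WS-sum, with Proposition \ref{pro_decomp}(iii), Lemma \ref{pro_6hh} and Proposition \ref{prop_1ab}(v) doing the remaining bookkeeping. The only cosmetic difference is that the paper writes out only the case $i=3$ and declares the others similar.
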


 \begin{proof}  We will give the proof for the most complicated  case, $i=3$, the conclusion for  other cases can be obtained by the same way.

$\bullet$ Take $(L,y)\in \L(X,Y)\times Y$.
According to \eqref{epiF-star}, 
\begin{equation}
\label{eq:41newe}
(\alpha)\; \Longleftrightarrow\;  (L, y)\in \epi (F+I_A)^\ast.
\end{equation}

 $\bullet$ {Now, we will show that 
\begin{align}
&(\beta_3)\; \Longleftrightarrow\; (L,y)\in \A_3. \label{eq:42newe}%\\
%&(\gamma_3)\; \Longleftrightarrow\; (\gamma_3'),\label{eq:43newe}
\end{align}}
%where
%\begin{align*}
% (\beta_3')&: (L,y)\in \A_3,\\
% (\gamma_3')&: (L,y)\in \B_3.
%\end{align*}

%$\bullet$ It is worth noting firstly that, for any {$L,L',L''\in \L(X,Y)$} and $T\in \L(X,Z)$, 
% by Proposition \ref{pro_decomp}(iii),
%\begin{equation}
%\label{eq:44_newe} 
%\begin{aligned}
%F^\ast(L')+ &I_C^\ast(L'')+  (T\circ G)^\ast(L-L'-L'')-\inte K \\
%&= \wsup[ F^\ast(L')+ I_C^\ast(L'')+  (T\circ G)^\ast(L-L'-L'')]-\inte K\\
%&=F^\ast(L')\uplus I_C^\ast(L'')\uplus  (T\circ G)^\ast(L-L'-L'')-\inte K.
%\end{aligned}
%\end{equation}
%and
%\begin{equation}
%\label{eq:45_newe}
%\begin{aligned}
%F^\ast(L')+ &I_C^\ast(L'')+  (T\circ G)^\ast(L-L'-L'')+I_{-S}^\ast(T)-\inte K \\
%&= \wsup[ F^\ast(L')+ I_C^\ast(L'')+  (T\circ G)^\ast(L-L'-L'')+I_{-S}^\ast(T)]-\inte K\\
%&=F^\ast(L')\uplus I_C^\ast(L'')\uplus  (T\circ G)^\ast(L-L'-L'')\uplus I_{-S}^\ast(T)-\inte K. 
%\end{aligned}
%\end{equation}
{\it Proof of  ``$\Rightarrow$'' in \eqref{eq:42newe}}.  Assume that $(\beta_3)$ holds.
Then, there {are $L',L''\in \L(X,Y)$ and $T\in \L_+(S,K)$} such that  $F^\ast(L')\uplus I_C^\ast(L'')\uplus (T\circ G)^\ast(L-L'-L'')\subset Y$ and 
$y\notin F^\ast(L')+ I_C^\ast(L'')+  (T\circ G)^\ast(L-L'-L'')-\inte K.$
On the other hand, by Proposition \ref{pro_decomp}(iii),
\begin{equation}
\label{eq:44_newe} 
\begin{aligned}
&F^\ast(L')+ I_C^\ast(L'')+  (T\circ G)^\ast(L-L'-L'')-\inte K \\
&= \wsup[ F^\ast(L')+ I_C^\ast(L'')+  (T\circ G)^\ast(L-L'-L'')]-\inte K\\
&=F^\ast(L')\uplus I_C^\ast(L'')\uplus  (T\circ G)^\ast(L-L'-L'')-\inte K.
\end{aligned}
\end{equation}
{Thus, $y\notin F^\ast(L')\uplus I_C^\ast(L'')\uplus  (T\circ G)^\ast(L-L'-L'')-\inte K$. It now follows from   Lemma \ref{pro_6hh}(i) and Proposition \ref{prop_1ab}(vi),  there exists
$V\in \P_p(Y)$ such that $ y\in F^\ast( L')\uplus I_C^\ast( L'') \uplus V$} and 
$ (T\circ G)^\ast( L-L'-L'')\preccurlyeq_K V$. By letting  $ U=F^\ast( L')\uplus I_C^\ast( L'') \uplus V \in \P_p(Y)$, one gets $ y\in  U$ and 
\begin{equation}\label{eq:lxab}
\begin{aligned}
( L, U)&=(L' + L''+ L-L'-L'',F^\ast( L')\uplus I_C^\ast( L'') \uplus V)\\
&=(L',F^\ast(L'))\boxplus (L'',I_C^\ast(L''))\boxplus (L-L'-L'', V)\\
&\in \exepi F^\ast \boxplus \exepi{I^\ast_C} \boxplus \exepi ( T\circ G)^\ast\subset \mathfrak{A}_3.
\end{aligned}
\end{equation}
Hence, $(L,y)\in\Psi(\mathfrak{A}_3)=\A_3$.

{\it Proof of``$\Leftarrow$'' in \eqref{eq:42newe}.} Assume that $(L,y)\in \A_3$. Then   there exists $U\in \mathcal{P}_p(Y)$ such that $(L,U)\in \mathfrak{A}_3$ and $y\in U$. As $(L,U)\in \mathfrak{A}_3$, there are $(L',U')\in \exepi F^\ast$, $(L'',U'')\in \exepi {I^\ast_C} $, $(L-L'-L'',W)\in\exepi (T\circ G)^\ast$ such that $U=U'\uplus U''\uplus W$. Then,
$F^\ast(L')\preccurlyeq_K U'$, ${I^\ast_C} (L'')\preccurlyeq_K U''$, $(T\circ G)^\ast(L-L'-L'')\preccurlyeq_K W$, and consequently, 
\begin{equation}\label{eq32--bis}
M:=F^\ast(L')\uplus {I^\ast_C} (L'')\uplus (T\circ G)^\ast(L-L'-L'') \preccurlyeq_K U
\end{equation}
(by Proposition \ref{prop_1ab} (v)). This yields $M\ne\{+\infty_Y\}$, which, together with the fact that $M$ belong to $\P_p(Y)^\infty$, show that $M\subset Y$. 
So, according to Proposition \ref{prop_4gg} (i), \eqref{eq32--bis} entails
$ U\cap (M-\inte K)=\emptyset$
showing that $y\notin M-\inte K$ (as $y\in U$) which also means that $(\beta_3)$ holds.

\medskip

 {Finally, taking  \eqref{eq:41newe} and \eqref{eq:42newe} into account, one gets}
\begin{align*}
({\bf a}_3) \;  &\Longleftrightarrow \;  [ (\alpha) \Leftrightarrow (\beta_3),\;\forall (L,y)\in  \L (X, Y)   \times Y]\\
&\Longleftrightarrow \; ({\bf b}_3), 
\end{align*}
which is desired.
   \end{proof}

\begin{remark}
{According to Lemma \ref{rem:3b}, $({\bf a}_1) \Leftrightarrow ({\bf b}_1)$ is  \cite[Proposition 5.1]{DGLL17} ((a)$\Leftrightarrow$(c)), when $\V=\L(X,Y)$ and $\mathcal{W}=Y$. To the best knowledge of the authors, the characterizations of vector Farkas lemmas given in  Theorem \ref{thm:1} with  $i=2,3$ are all new. Moreover, taking Remark \ref{rem73} into account, we will see that Theorem  \ref{thm:1}  when specifying to the scalar case ($Y = \mathbb{R}$ and $K= \mathbb{R}_+$),  extends some known  results in the literature, such as the ones in  \cite{DNV-08,DVN-08},  
as shown in the next corollary (see also Remark \ref{rem:5.2d}). }
   \end{remark}

Let us recall that $ \A'_i$, $i=1, 2, 3$,  are the sets defined in  Remark \ref{rem73}. 

\begin{corollary}  \label{cor:5.1}
%Let $\emptyset\ne \mathcal{V}\subset X^\ast$.
Let $f\colon X\rightarrow \overline{\mathbb{R}}$ be a proper function, $
G\colon X\rightarrow Z\cup\{+\infty_{Z}\}$ be a proper mapping, and  $C$ be a
nonempty convex subset of $X$ such that  $A\cap \dom F\neq\emptyset$ (where $A:=C\cap G^{-1}(-S)$).  Consider the following statements: 

\begin{tabular}{c c}
\begin{minipage}[t]{0.045\textwidth}
\indent $( \alpha')$ 
\end{minipage}
&  
\begin{minipage}[t]{0.9\textwidth}
$x\in C, \; G(x)\in -S \Longrightarrow f(x)-x^\ast(x)\ge r$,
\end{minipage}
\end{tabular}

\begin{tabular}{c c}
\begin{minipage}[t]{0.045\textwidth}
$( \beta'_1)$ 
\end{minipage}
 &  
\begin{minipage}[t]{0.8\textwidth}
$\exists \lambda\in S^+ : (f+i_C+\lambda G)^\ast(x^\ast) \le -r$,
\end{minipage}
\end{tabular}

\begin{tabular}{c c}
\begin{minipage}[t]{0.045\textwidth}
$( \beta'_2)$ 
\end{minipage}
 &  
\begin{minipage}[t]{0.8\textwidth}
$\exists u^\ast \in X^\ast,\; \exists \lambda\in S^+: f^\ast(u^\ast) + (i_C+\lambda G)^\ast( x^\ast-u^\ast) \le -r$,
\end{minipage}
\end{tabular}

\begin{tabular}{cc}
\begin{minipage}[t]{0.045\textwidth}
$( \beta'_3)$ 
\end{minipage}
 &  
\begin{minipage}[t]{0.8\textwidth}
$\exists u^\ast,v^\ast\in X^\ast,\; \exists \lambda \in S^+: f^\ast(u^\ast) + i^\ast_C(v^\ast) + (\lambda G)^\ast (x^\ast-u^\ast-v^\ast)\le -r$. 
\end{minipage}
\end{tabular}
 
\noindent {Moreover,   let }

 $({\bf a}'_i)$\; {$ \epi  (f+i_A)^\ast= \A'_i$}, 

 {$({\bf b}'_i)$\;} $(\alpha')\Longleftrightarrow (\beta'_i), \; \forall (x^\ast,y)\in {X^\ast}\times \mathbb{R}$. 

\medskip 
\noindent {Then, $[({\bf a}'_i)   \Leftrightarrow {({\bf b}'_i)}]$ for all $i=1,2,3$.}

\end{corollary}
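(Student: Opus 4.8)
The plan is to derive Corollary \ref{cor:5.1} as the scalar specialization of Theorem \ref{thm:1}, exploiting the dictionary set up in Remark \ref{rem_2dd} and Remark \ref{rem73}. Specifically, I would put $Y = \RR$, $K = \RR_+$, and $F = f$ an extended-real-valued function. Under this substitution the cone of positive operators $\L_+(S,K)$ collapses to $S^+$, the space $\mathcal{P}_p(Y)$ becomes $\RR$, the order $\preccurlyeq_K$ becomes $\le$, the WS-sum $\uplus$ becomes ordinary addition, the $\boxplus$-sum becomes the Minkowski sum on $X^\ast \times \RR$, and $\Psi$ becomes the identity. Consequently $\A_i$ becomes $\A'_i$ for $i=1,2,3$. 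So the statements $({\bf a}_i)$ and $({\bf a}'_i)$ coincide, and what remains is to check that the abstract statements $(\alpha)$, $(\beta_1)$, $(\beta_2)$, $(\beta_3)$ translate precisely into $(\alpha')$, $(\beta'_1)$, $(\beta'_2)$, $(\beta'_3)$, after which $[({\bf a}'_i)\Leftrightarrow({\bf b}'_i)]$ is simply $[({\bf a}_i)\Leftrightarrow({\bf b}_i)]$ read in the scalar case.

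The key translation steps I would carry out explicitly are the following. First, for $(L,y)\in\L(X,Y)\times Y$ with $Y=\RR$, a continuous linear map $L$ is just an element $x^\ast\in X^\ast$ and $y\in\RR$; writing $y = -r$ identifies the inclusion $F(x)-L(x)+y\notin -\inte K$ with $f(x)-x^\ast(x)-r\notin -\inte\RR_+ = (-\infty,0)$, i.e.\ with $f(x)-x^\ast(x)\ge r$, so $(\alpha)$ becomes $(\alpha')$. Second, in the scalar case the conjugate mapping $F^\ast$ reduces to the usual Fenchel conjugate, so $(F+I_C+T\circ G)^\ast(L)$ becomes the real number $(f+i_C+\lambda G)^\ast(x^\ast)$; the condition $(F+I_C+T\circ G)^\ast(L)\subset Y$ is automatic (a finite conjugate value lies in $\RR$), and $y-(F+I_C+T\circ G)^\ast(L)\notin-\inte K$ becomes $-r-(f+i_C+\lambda G)^\ast(x^\ast)\ge 0$, i.e.\ $(f+i_C+\lambda G)^\ast(x^\ast)\le -r$, which is $(\beta'_1)$. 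Third, the same reasoning applied to $(\beta_2)$ and $(\beta_3)$, with the WS-sum becoming ordinary addition and $L'$, $L''$ becoming $u^\ast$, $v^\ast$, gives $(\beta'_2)$ and $(\beta'_3)$ verbatim.

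The cleanest organization is therefore to state at the outset that setting $Y=\RR$ and $K=\RR_+$ reduces all objects in Theorem \ref{thm:1} to their scalar counterparts via Remarks \ref{rem_2dd} and \ref{rem73}, thereby identifying $\A_i$ with $\A'_i$ and $({\bf a}_i)$ with $({\bf a}'_i)$; then to verify the four term-by-term translations above, concluding that $(\beta_i)\Leftrightarrow(\beta'_i)$ and hence $({\bf b}_i)\Leftrightarrow({\bf b}'_i)$; and finally to invoke Theorem \ref{thm:1} directly.

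I expect the main obstacle to be bookkeeping rather than conceptual depth: one must handle the side-conditions ``$\cdots\subset Y$'' carefully, since in the vector setting they encode that the relevant WS-sum is not $\{+\infty_Y\}$, whereas in the scalar case they amount to the (possibly vacuous, possibly content-bearing) requirement that the conjugate values be finite rather than $+\infty$. I would note that if any of the scalar conjugate values equals $+\infty$ then the inequality $(\beta'_i)$ fails trivially for finite $r$, matching the vector condition that the corresponding WS-sum equal $\{+\infty_Y\}$, so the translation of the ``$\subset Y$'' clauses is faithful. A secondary point worth a sentence is that $S^+$ is exactly $\L_+(S,\RR_+)$, which is recorded in Section 2, so no separate argument is needed there.
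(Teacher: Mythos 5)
Your proposal is correct and follows exactly the paper's own route: the paper likewise proves Corollary \ref{cor:5.1} by specializing Theorem \ref{thm:1} to $Y=\mathbb{R}$, $K=\mathbb{R}_+$, invoking Remarks \ref{rem_2dd} and \ref{rem73} to identify $\A_i$ with $\A'_i$ and $(\alpha),(\beta_i)$ with $(\alpha'),(\beta'_i)$. Your explicit bookkeeping of the ``$\subset Y$'' clauses and the $y=-r$ substitution is more detailed than the paper's one-line argument but adds nothing different in substance.
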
 

\begin{proof}
{When $Y=\mathbb{R}$ and $K=\mathbb{R}_+$, one has, for each $i=1,2,3$,  $\A_i$ collapses  to $\A'_i$  (see Remark  \ref{rem73}), $(\alpha)$ becomes  $(\alpha')$, while  $(\beta_i)$ becomes  $(\beta'_i)$.}     The equivalences $[({\bf a}'_i)   \Leftrightarrow ({\bf b}'_i)]$, $i=1,2,3$ now follow immediately from Theorem \ref{thm:1}.
\end{proof}

\begin{remark}\label{rem:5.2d}  {In  the case where $Y = \mathbb{R}$ and $K = \mathbb{R}_+$, if we   assume further  that  $C$ is a closed convex subset of $X$, that $f$ is a proper convex and lower semicontinuous function,   and that $G$ is s proper $S$-convex and positively $S$-lsc mapping. Then, according to  \cite[Section 8]{Bot2010}, one has
$$\epi (f+i_A)^\ast=\cl[ \epi f^*+\epi i^\ast_C+\bigcup_{z^*\in S^+}\epi(z^*\circ G)^*].$$
So, in  this setting, observe firstly that Corollary \ref{cor:5.1} with  $i=1$ is \cite[Corollary 6.14]{DM-12}. Moreover, 
note also that $({\bf a}'_3)$ is equivalent to the condition stating  that  the set $\epi f^*+\epi i^\ast_C+\bigcup_{z^*\in S^+}\epi(z^*\circ G)^*$ is weak*-closed,  which is exactly the condition $({\bf CC})$ introduced in \cite{DNV-08,DVN-08}. Moreover, by Proposition \ref{pro:9}, the fulfillment of $({\bf a}'_3)$  ensures that both $({\bf a}'_1)$ and $({\bf a}'_2)$  hold. So, Corollary \ref{cor:5.1} can be considered  as  {extended} versions   of  \cite[Corollary 5.2]{DNV-08} and \cite[Corollary 6.2]{DVN-08},   and as a result, Theorem \ref{thm:1} is vector extension version of the Farkas lemmas just mentioned in \cite{DNV-08} and \cite{DVN-08} in twofolds: Firstly, the Farkas versions in Theorem \ref{thm:1} extend the Farkas-type result in the two mentioned paper from scalar systems to systems involving vector-valued functions; secondly, they extend the Farkas-type result to stable Farkas-type results.  } 
\end{remark}

\subsection{ Convex Vector inequalities}

\begin{corollary}[Convex vector Farkas lemma]  
\label{cor:5.3}
Let  $F$ be a proper and $K$-convex mapping,   $G$ be a proper and $S$-convex mapping,  and  $C$ be   a  nonempty and convex subset of $X$.  The following statements  hold:% for all $(L,y)\in \L(X,Y)\times Y$: 
\begin{itemize}
\item[{\rm (i)}] If $(C_1)$ holds then ${\bf (b_1)}$ holds, 
\item[{\rm (ii)}] If $(C_1)$ and $(C_2)$ hold then   ${\bf (b_2)}$ holds, 
\item[{\rm (iii)}] If $(C_1)$, $(C_2)$ and $(C_3)$ hold then   ${\bf (b_3)}$ holds,
\end{itemize}
{where  ${\bf (b_i)}$, $i = 1, 2, 3$ are statements in Theorem \ref{thm:1}.} 
\end{corollary}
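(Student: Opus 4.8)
The plan is to derive Corollary \ref{cor:5.3} directly from Theorem \ref{thm:4.1f} and Theorem \ref{thm:1}, since these two results together already contain everything needed. The key observation is that Theorem \ref{thm:4.1f} provides the \emph{representations} of the epigraph (the statements $({\bf a}_i)$), while Theorem \ref{thm:1} provides the \emph{equivalence} $({\bf a}_i)\Leftrightarrow({\bf b}_i)$ relating each such representation to the corresponding Farkas characterization. So the entire argument is a matter of verifying that the hypotheses of Corollary \ref{cor:5.3} are precisely the ones under which Theorem \ref{thm:4.1f} supplies the required representation, and then transporting that representation through Theorem \ref{thm:1}.

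For part (i), I would argue as follows. By assumption $F$ is proper $K$-convex, $G$ is proper $S$-convex, and $C$ is a nonempty convex subset of $X$; these are exactly the standing convexity hypotheses in force in the second half of Section 4, so Theorem \ref{thm:4.1f} is applicable. If $(C_1)$ holds, then Theorem \ref{thm:4.1f}(a) gives $\epi(F+I_A)^\ast=\A_1$, which is precisely statement $({\bf a}_1)$. By Theorem \ref{thm:1} (the case $i=1$), the implication $({\bf a}_1)\Rightarrow({\bf b}_1)$ holds, and hence $({\bf b}_1)$ follows. Parts (ii) and (iii) are entirely parallel: under $(C_1)$ and $(C_2)$, Theorem \ref{thm:4.1f}(b) yields $\epi(F+I_A)^\ast=\A_2$, i.e.\ $({\bf a}_2)$, whence $({\bf b}_2)$ by Theorem \ref{thm:1}; and under $(C_1)$, $(C_2)$, $(C_3)$, Theorem \ref{thm:4.1f}(c) yields $\epi(F+I_A)^\ast=\A_3$, i.e.\ $({\bf a}_3)$, whence $({\bf b}_3)$ by Theorem \ref{thm:1}.

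In short, the proof is a clean two-line chaining for each item: the regularity conditions trigger the appropriate part of the representation theorem (establishing $({\bf a}_i)$), and Theorem \ref{thm:1} converts that representation into the Farkas characterization $({\bf b}_i)$. There is no genuine obstacle here, since Corollary \ref{cor:5.3} is essentially a corollary by design. The only point demanding a moment of care is confirming that the convexity hypotheses stated in Corollary \ref{cor:5.3} ($F$ $K$-convex, $G$ $S$-convex, $C$ convex) match the blanket assumptions under which Theorem \ref{thm:4.1f} was proved, so that its three cases may legitimately be invoked; once this is noted, the rest is immediate. I would therefore write the proof as a single short paragraph invoking Theorem \ref{thm:4.1f}(a)--(c) to obtain $({\bf a}_1)$--$({\bf a}_3)$ respectively under the stated conditions, followed by an application of Theorem \ref{thm:1} to pass from each $({\bf a}_i)$ to the desired $({\bf b}_i)$.
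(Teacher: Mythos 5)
Your proposal is correct and matches the paper's own proof, which simply states that the corollary is a direct consequence of Theorems \ref{thm:4.1f} and \ref{thm:1}; you have merely spelled out the same two-step chaining (regularity conditions give $({\bf a}_i)$ via the representation theorem, then Theorem \ref{thm:1} converts $({\bf a}_i)$ into $({\bf b}_i)$).
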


\begin{proof}
{These are direct consequences of  Theorems \ref{thm:4.1f} and  \ref{thm:1}.} 
\end{proof}

{Turning  to the scalar case,  i.e.,  $Y=\mathbb{R}$ and $K=\mathbb{R}_+$,  the conditions $(C_1)$ and $(C_2)$ collapse, respectively, to the ones below: }

\noindent
\begin{tabular}{c  c}
\hskip1cm $(C_1^\prime)$ &  
\begin{minipage}{0.9\textwidth}
$\exists x_1 \in  C\cap \dom f: G(x_1)\in -\inte S$
\end{minipage}
\end{tabular}

\noindent
\begin{tabular}{c  c}
\hskip1cm $(C^\prime_2)$ &  
\begin{minipage}{0.9\textwidth}
$\exists  x_2\in  C\cap\dom G:  f\textrm{ is continuous  at } x_2$.
\end{minipage}
\end{tabular}
The next corollary is a direct sequence of Corollary \ref{cor:5.3}.

\begin{corollary}[Convex Farkas-type results]  
Let $f\colon X\rightarrow \overline{\mathbb{R}}$ be a  proper convex function, $
G\colon X\rightarrow Z\cup\{+\infty_{Z}\}$ be a proper $S$-convex mapping, and $C$ be a nonempty convex subset of $X$ such that  $A\cap \dom F\neq\emptyset$ (where $A:=C\cap G^{-1}(-S)$).  
{The following statements hold:}%  for all $(x^\ast,y)\in\red{ X^\ast}\times \mathbb{R}$:  
\begin{itemize}
\item[{\rm (i)}] If $(C^\prime_1)$ holds then  ${\bf (b^\prime_1)} $ holds,  
\item[{\rm (ii)}] If $(C^\prime _1)$ and $(C^\prime_2)$ hold then  ${\bf (b^\prime_2)} $ holds,  
 \item[{\rm (iii)}] If $(C^\prime_1)$, $(C^\prime_2)$ and $(C_3)$ hold then  ${\bf (b^\prime_3)} $ holds, 
 \end{itemize}
 {where  ${\bf (b^\prime_i)} $, $i = 1, 2, 3$ are statements in   Corollary \ref{cor:5.1}. }  \end{corollary}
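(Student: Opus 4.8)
The plan is to derive this corollary as a direct specialization of Corollary \ref{cor:5.3} to the scalar setting $Y=\mathbb{R}$, $K=\mathbb{R}_+$, exactly as the paper's own one-line proofs of the analogous corollaries suggest. First I would invoke Remark \ref{rem73} to record the translation dictionary: in this setting the cone $\L_+(S,K)$ reduces to the dual cone $S^+$, the space $\mathcal{P}_p(Y)$ collapses to $\mathbb{R}$, the WS-sum $\uplus$ becomes ordinary addition, the $\boxplus$-sum becomes the Minkowski sum in $X^\ast\times\mathbb{R}$, and the mapping $\Psi$ is the identity; consequently each $\A_i$ collapses to $\A'_i$. I would then note, via Corollary \ref{cor:5.1} (whose own proof already establishes these collapses), that $(\alpha)$ becomes $(\alpha')$ and $(\beta_i)$ becomes $(\beta'_i)$, so that the abstract statements ${\bf (b_i)}$ of Theorem \ref{thm:1} specialize precisely to the scalar statements ${\bf (b'_i)}$ of Corollary \ref{cor:5.1}.

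Next I would verify that the hypotheses transfer correctly. The regularity conditions $(C_1)$ and $(C_2)$ specialize verbatim to $(C'_1)$ and $(C'_2)$ (the paper has already displayed these scalar forms just above the statement), while $(C_3)$, being a continuity condition on $G$ alone, is unchanged. The structural hypotheses of Corollary \ref{cor:5.3}---that $F$ is $K$-convex, $G$ is $S$-convex, and $C$ is convex---become exactly the hypotheses here that $f$ is convex, $G$ is $S$-convex, and $C$ is convex, since a real-valued function is $\mathbb{R}_+$-convex precisely when it is convex in the usual sense.

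With the dictionary and hypotheses aligned, the proof is then mechanical: each of the three implications is obtained by applying the corresponding part of Corollary \ref{cor:5.3} and reading off the conclusion through the specializations above. For part (i), $(C'_1)$ is $(C_1)$ in the scalar case, so Corollary \ref{cor:5.3}(i) yields ${\bf (b_1)}$, which is ${\bf (b'_1)}$; parts (ii) and (iii) follow identically using $(C'_1),(C'_2)$ and $(C'_1),(C'_2),(C_3)$ respectively. I expect no genuine obstacle here, as all the substantive work resides in Theorem \ref{thm:1} and Theorem \ref{thm:4.1f}; the only point requiring a moment of care is confirming that $\mathbb{R}_+$-convexity of $F$ coincides with convexity of $f$ so that Corollary \ref{cor:5.3}'s convexity hypotheses match, and that $(\beta'_i)$ is literally the scalar reading of $(\beta_i)$ under $\uplus\mapsto+$ and $\not<_K\mapsto\ge$. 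Hence the corollary is \emph{a direct consequence of Corollary \ref{cor:5.3}}.
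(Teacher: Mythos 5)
Your proposal is correct and follows exactly the paper's route: the paper simply states this corollary as a direct consequence of Corollary \ref{cor:5.3} via the scalar specialization $Y=\mathbb{R}$, $K=\mathbb{R}_+$, which is precisely the dictionary argument you lay out. No issues.
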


\section{Lagrange and Fenchel-Lagrange  Duality for  Vector Optimization Problems}
We retain the notation   in Section 4 and 
consider a  vector optimization problem 
\begin{align*}
({\rm VP})\quad &\winf \{F(x): x\in C,\; G(x)\in -S\}  
\end{align*}
{with the feasible  set $A := C \cap G^{-1}(-G)$ being non-empty. }

The \emph{Lagrange dual problem} 
 of $({\rm CVP})$ is defined  in \cite{DL2017} as: %\footnote{In \cite{DL2017}, $({\rm CVD}_1)$ and $({\rm CVD}_{1\ell})$ are called the weak Lagrange dual problem and Lagrange dual problem, respectively.}
\begin{align*}
({\rm VD}_{1}) \quad&  \mathop{\wsup}_{T\in \L_+(S,K)} \hskip1em\mathop{\winf}_{x\in C}\hskip0.7em [F(x)+(T\circ G)(x)],   
\end{align*}
{or, equivalently, 
\begin{align*}
({\rm VD}_{1}) \quad&  \mathop{\wsup}_{T\in \L_+(S,K)} -(F+I_C+T\circ G)^\ast(0_\L).
\end{align*} }
We now propose some two  {new} types of \emph{``Fenchel-Lagrange" dual problems} of $({\rm VP})$: 
\begin{align*}
&({\rm VD}_{2})&& \mathop{\wsup}_{\substack{L'\in\L(X,Y)\\T\in \L_+(S,K)}} \hskip0.6em  -[F^*(L')\uplus (I_C+T\circ G)^*(-L')], \\
&({\rm VD}_{3})&& \mathop{\wsup}_{\substack{L',L'' \in\L(X,Y)\\ T\in\L_+(S,K)}} \!\! -[F^*(L')\uplus I^\ast_C(L'')\uplus  (T\circ G)^*(-L'-L'')]. 
\end{align*}
It is worth observing that when going back to the scalar problem, i.e., when $Y = \mathbb{R}$ and (VP) is a scalar problem (P),  the last two dual problems turn back to the Fenchel-Lagrange dual problems  $({\rm D}_{2})$   and  $({\rm D}_{3})$ of (P) mentioned in the Introduction (Section 1), and      this justifies the names of these dual problems.

%\begin{remark}
%\blue{(a)  

%\blue{(b) %The dual problem type-2, $({\rm VD}_{2})$, is the vector-valued versions of $(DSV_w^{C_{FL}})$ which is introduced in \cite{BGW09} for set-valued function. 
%\red{The dual problems $({\rm VD}_{2})$ and  $({\rm VD}_{3})$},    to the best knowledge of the authors,  is  new.}
%\end{remark} 

\begin{definition}
We say that ``strong duality holds for the pair $({\rm VP})-({\rm VD}_1)$'' if 
$\winf ({\rm VP})=\wmax ({\rm VD}_1).$\footnote{Observe  that when $\winf ({\rm VP})=\wmax ({\rm VD}_1)$, one has  $\wsup ({\rm VD}_1)=\wmax ({\rm VD}_1)$ (see  {Proposition \ref{pro_5hh}  %\ref{pro_6hh}
(i)}), and hence, $({\rm VD}_1)$ attains  at any value from $\wsup ({\rm VD}_1)$.}

We denote by $({\rm VP}^L)$  the problem $({\rm VP})$ perturbed by a linear operator $L\in \L(X,Y)$, 
\begin{align*}
({\rm VP}^L)\quad &\winf \{F(x)-L(x): x\in C,\; G(x)\in -S\}.
\end{align*}
{Then the  Lagrange dual problem}  of  $({\rm VP}^L)$ will be denote by  $({\rm VD}_1^L)$.
% Let $\emptyset \ne \V\subset \L(X,Y)$. 
We say  that  the {\it stable strong duality holds for the pair  $({\rm VP})-({\rm VD}_1)$} if the strong duality holds for the  pair $({\rm VP}^L)-({\rm VD}_1^L)$ for any  $L\in \L(X,Y)$. 
%In such a situation, we simply said that  stable strong  duality holds for the pair  $({\rm CVP})-({\rm CVD}_1)$    when $\V = \L(X,Y)$.\footnote{When $\V = \{0_\L\}$,  $\V$-stable  strong duality is  none other than the usual   strong duality. }
The notions stable strong duality corresponding to the other pairs of primal-dual problems $({\rm VP})-({\rm VD}_2)$ and $({\rm VP})-({\rm VD}_3)$ will be understood in the same way.
\end{definition}

\begin{lemma}\label{lem:6.2c}
$\wsup ({\rm VD}_{i}^L)=\wsup \{-y: (L,y)\in \A_i\}$ for all $L\in \L(X,Y)$  and  $i\in\{1,2,3\}$, where $\A_i $ are the sets defined in \eqref{eq21}. 

\end{lemma}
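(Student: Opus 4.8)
The plan is to unwind each perturbed dual objective, identify the $L$-section of $\A_i$ as a set of the shape $P_i+K$, and then strip off the trailing $-K$ using Proposition~\ref{pro_decomp}(vii).

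\textbf{Step 1 (the perturbation only shifts the first conjugate slot).} First I would record the elementary identity, read off directly from Definition~\ref{def2.3}, that $(F-L)^\ast(L')=\wsup\{L'(x)-(F-L)(x):x\in X\}=F^\ast(L'+L)$. Feeding this into the dual problems $(\mathrm{VD}_i^L)$ of $(\mathrm{VP}^L)$ and reindexing the free operators (for $i=3$: $L'\mapsto L'+L$), each dual objective becomes a $\wsup$ over its parameters of the negatives of the corresponding $\uplus$-combinations whose total slot is $L$. Concretely, writing $M_T:=(F+I_C+T\circ G)^\ast(L)$, $M_{A,T}:=F^\ast(A)\uplus(I_C+T\circ G)^\ast(L-A)$, and $M_{A,B,T}:=F^\ast(A)\uplus I_C^\ast(B)\uplus(T\circ G)^\ast(L-A-B)$, one obtains $\wsup(\mathrm{VD}_i^L)=\wsup N_i$, where $N_i:=\bigcup(-M_{\mathrm{params}})$ is the union over the relevant parameters. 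Here I use the convention that $\wsup$ of a family of sets means $\wsup$ of their union, and Remark~\ref{rem_1hh} to move freely between $\wsup$ and $-\winf$.

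\textbf{Step 2 (the $L$-section of $\A_i$).} Next I would compute $(\A_i)_L:=\{y\in Y:(L,y)\in\A_i\}$. The cleanest route is to invoke the \emph{unconditional} equivalence already isolated inside the proof of Theorem~\ref{thm:1}, namely $(\beta_i)\Leftrightarrow\bigl[(L,y)\in\A_i\bigr]$ (this is \eqref{eq:42newe} for $i=3$, and the proof notes the cases $i=1,2$ are identical). Since $(\beta_i)$ says precisely that for some admissible parameters the combination $M_{\mathrm{params}}$ lies in $Y$ and $y\notin M_{\mathrm{params}}-\inte K$, one gets $(\A_i)_L=\bigcup_{\mathrm{params}}\bigl(Y\setminus(M_{\mathrm{params}}-\inte K)\bigr)$. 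As each $M_{\mathrm{params}}$ is a weak supremum, hence a $(Y,K)$-partition style set with $\wsup M_{\mathrm{params}}=M_{\mathrm{params}}$, the decomposition of Proposition~\ref{pro_decomp}(iv) together with Proposition~\ref{pro_5hh}(ii) gives $Y\setminus(M_{\mathrm{params}}-\inte K)=M_{\mathrm{params}}\cup(M_{\mathrm{params}}+\inte K)=M_{\mathrm{params}}+K$. Discarding the parameters for which the combination equals $\{+\infty_Y\}$ (which contribute nothing here and only $\{-\infty_Y\}$ to the dual value), this yields $(\A_i)_L=P_i+K$ with $P_i:=\bigcup M_{\mathrm{params}}\subset Y$, and therefore $\{-y:(L,y)\in\A_i\}=-(P_i+K)=N_i-K$, where $N_i=-P_i$ is exactly the set of Step~1. (Alternatively, this section formula can be re-derived by unwinding $\boxplus$ and $\Psi$ directly, choosing the extended-epigraph representatives to be the conjugate values themselves for ``$\supseteq$'' and using the monotonicity of $\uplus$, Proposition~\ref{prop_1ab}(v), together with the ``$\preccurlyeq_K\Leftrightarrow\,\subset\! \pm K$'' dictionary of Lemma~\ref{pro_6hh}(ii), for ``$\subseteq$''.)

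\textbf{Step 3 (remove $-K$ and conclude).} Combining Steps~1 and~2, it remains to prove $\wsup(N_i-K)=\wsup N_i$. If $\wsup N_i\ne\{+\infty_Y\}$ this is exactly Proposition~\ref{pro_decomp}(vii). If $\wsup N_i=\{+\infty_Y\}$, then by Proposition~\ref{pro_decomp}(i) one has $N_i-\inte K=Y$; using $K+\inte K=\inte K$ one checks $(N_i-K)-\inte K=N_i-\inte K=Y$, so Proposition~\ref{pro_decomp}(i) applied to $N_i-K$ gives $\wsup(N_i-K)=\{+\infty_Y\}$ as well. In either case the two weak suprema agree, so $\wsup\{-y:(L,y)\in\A_i\}=\wsup(N_i-K)=\wsup N_i=\wsup(\mathrm{VD}_i^L)$, which is the claim for all $i\in\{1,2,3\}$.

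\textbf{Main obstacle.} The genuinely delicate part is the bookkeeping of the adjoined elements $\pm\infty_Y$: one must verify that parameters producing $\{+\infty_Y\}$ drop out of $(\A_i)_L$ (so that $P_i\subset Y$) while contributing only the $\wsup$-negligible $\{-\infty_Y\}$ to the dual objective, and one must supply the separate $\{+\infty_Y\}$ argument in Step~3 because Proposition~\ref{pro_decomp}(vii) is stated only under $\wsup N_i\ne\{+\infty_Y\}$. The identity $(\A_i)_L=P_i+K$ itself, though conceptually routine once \eqref{eq:42newe} is in hand, is where the full force of the $\boxplus$/$\Psi$ formalism and the partition-style decomposition is needed.
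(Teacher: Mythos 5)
Your proof is correct and follows essentially the same route as the paper's: the conjugate shift $(F-L)^\ast(L')=F^\ast(L+L')$, the identification of the $L$-section of $\A_i$ through the equivalence $(\beta_i)\Leftrightarrow (L,y)\in\A_i$ combined with the partition-style decomposition (the paper's chain \eqref{eq:azaza}), and Proposition~\ref{pro_decomp}(vii) to absorb the extra $-K$. Your explicit treatment of the $\wsup N_i=\{+\infty_Y\}$ case is a small tightening the paper leaves implicit, but not a different argument.
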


\begin{proof}
We prove the conclusion for $i = 3$, i.e.,   that {$\wsup ({\rm VD}_{3}^L)=\wsup\{-y: (L,y)\in \A_3\}$}. 
The proofs of other cases are  similar. 
On the one hand, one has
\begin{align*}
\wsup({\rm VD}_{3}^L)&=\mathop{\wsup}_{\substack{L',L''\in\L(X,Y)\\T\in \L_+(S,K)}}  -[(F-L)^*(L') \uplus I_C^\ast(L'')\uplus(T\circ G)^*(-L'-L'')]\\
&=\mathop{\wsup}_{\substack{L',L''\in\L(X,Y)\\T\in \L_+(S,K)}}  -[F^*(L+L')  \uplus I_C^\ast(L'')\uplus(T\circ G)^*(-L'-L'')]\\
&=\mathop{\wsup}_{\substack{L',L''\in\L(X,Y)\\T\in \L_+(S,K)}}  -[F^*(L+L')  \uplus I_C^\ast(L'')\uplus(T\circ G)^*(-L'-L'')-K]\\
&=\mathop{\wsup}  \left\{-y: 
\begin{array}{r}y\in F^*(L+L')  \uplus I_C^\ast(L'')\uplus(T\circ G)^*(-L'-L'')+K\\ {\textrm{ for some }} L',L''\in\L(X,Y)\textrm{ and }T\in \L_+(S,K)
\end{array}
\right \}
\end{align*}
\noindent (where the third equality follows from Proposition \ref{pro_decomp}(vii)).
{On the other hand, as  $F^*(L+L') \uplus I_C^\ast(L'')\uplus(T\circ G)^*(-L'-L'')\in \P_p(Y)$, one has }
\begin{eqnarray}\label{eq:azaza}
\begin{aligned}
& \begin{array}{l} y\in F^*(L+L') \uplus I_C^\ast(L'')\uplus(T\circ G)^*(-L'-L'')+K\\
\hskip1cm\textrm{for some } L',L''\in\L(X,Y)\textrm{ and }T\in \L_+(S,K)\end{array}\\
 \Longleftrightarrow\; & \begin{array}[t]{l}  y\notin F^*(L+L') \uplus I_C^\ast(L'')\uplus(T\circ G)^*(-L'-L'')-\inte K\\
\hskip1cm\textrm{for some } L',L''\in\L(X,Y)\textrm{ and }T\in \L_+(S,K)\quad \textrm{(by Lemma \ref{pro_6hh}(i))}\end{array}     \\
 \Longleftrightarrow\; & \begin{array}[t]{l}  y\notin F^*(L+L') + I_C^\ast(L'')+(T\circ G)^*(-L'-L'')-\inte K\\
\hskip1cm\textrm{for some } L',L''\in\L(X,Y)\textrm{ and }T\in \L_+(S,K)\quad\textrm{(by Prop. \ref{pro_decomp}(iii))}\end{array}\\ 
 \Longleftrightarrow\; &(\beta_3) \textrm{ holds }\\ 
  \Longleftrightarrow\;  & (L,y)\in \A_3 \quad\textrm{ (see \eqref{eq:42newe})}.   
\end{aligned}
\end{eqnarray}
Consequently, {$\wsup({\rm VD}_{3}^L)=\wsup \{-y: (L,y)\in \A_3\}$} and we are done.
\end{proof}

\begin{proposition}[Weak duality]
 \label{thm_WD}
For any $L\in \L(X,Y)$, it holds 
$$\wsup ({\rm VD}_3^L)\preccurlyeq_K\wsup ({\rm VD}_2^L)\preccurlyeq_K
\wsup ({\rm VD}_1^L)\preccurlyeq_K\winf ({\rm VP}^L).$$
%$$\xymatrix{
%\wsup ({\rm CVD}_3^L)  \ar[r] & \wsup ({\rm CVD}_2^L)  \ar[r] & \wsup ({\rm CVD}_1^L)  \ar[r]&\red{\winf ({\rm VP}^L)} \\
%\red{\wsup ({\rm VD}_{3}^L)}\ar[u]\ar[r] & \wsup ({\rm CVD}_{2\ell}^L)\ar[u]\ar[r] & \wsup ({\rm CVD}_{1\ell}^L)\ar[u]\ar[ur]
%}$$
\end{proposition}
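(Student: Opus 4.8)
The plan is to route everything through Lemma \ref{lem:6.2c}, which rewrites each dual value as $\wsup({\rm VD}_i^L)=\wsup\{-y:(L,y)\in\A_i\}$, and then to exploit the monotonicity of $\wsup$ with respect to set inclusion. First I would recall from Proposition \ref{pro:9} the chain $\A_3\subset\A_2\subset\A_1\subset\epi(F+I_A)^\ast$. Fixing $L\in\L(X,Y)$, these inclusions immediately yield the inclusions of the associated slices $\{-y:(L,y)\in\A_3\}\subset\{-y:(L,y)\in\A_2\}\subset\{-y:(L,y)\in\A_1\}$ inside $Y^\bullet$. Applying Proposition \ref{prop_4gg}(iii) (if $M\subset N$ then $\wsup M\preccurlyeq_K\wsup N$) to each inclusion and combining with Lemma \ref{lem:6.2c} produces at once the first two links $\wsup({\rm VD}_3^L)\preccurlyeq_K\wsup({\rm VD}_2^L)\preccurlyeq_K\wsup({\rm VD}_1^L)$. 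These two steps are entirely formal, requiring nothing beyond set inclusion and the monotonicity of the weak supremum.

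The remaining and genuinely substantive link is $\wsup({\rm VD}_1^L)\preccurlyeq_K\winf({\rm VP}^L)$. Here I would again use $\A_1\subset\epi(F+I_A)^\ast$ together with Lemma \ref{lem:6.2c} and Proposition \ref{prop_4gg}(iii) to reduce the claim to establishing the identity
\[
\wsup\{-y:(L,y)\in\epi(F+I_A)^\ast\}=\winf({\rm VP}^L).
\]
Once this identity is in hand, monotonicity gives $\wsup({\rm VD}_1^L)=\wsup\{-y:(L,y)\in\A_1\}\preccurlyeq_K\wsup\{-y:(L,y)\in\epi(F+I_A)^\ast\}=\winf({\rm VP}^L)$, so that the whole chain closes.

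To prove the identity I would compute the conjugate value directly. Since $(F+I_A)(x)=F(x)$ for $x\in A$ and $(F+I_A)(x)=+\infty_Y$ otherwise, Definition \ref{def2.3} gives $(F+I_A)^\ast(L)=\wsup\{L(x)-F(x):x\in A\}=\wsup(-M)$ with $M:=\{(F-L)(x):x\in A\}$; by Remark \ref{rem_1hh} (i.e.\ $\winf M=-\wsup(-M)$) this equals $-\winf({\rm VP}^L)$. Reading off the epigraph via Definition \ref{def2.3}, the slice $\{-y:(L,y)\in\epi(F+I_A)^\ast\}$ is exactly $-\big((F+I_A)^\ast(L)+K\big)=\winf({\rm VP}^L)-K$. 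It then remains to apply $\wsup(U-K)=\wsup U$ from Proposition \ref{pro_decomp}(vii) and the self-supremum property $\wsup U=U$ from Lemma \ref{pro_6hh}(iii) (valid because $\winf({\rm VP}^L)$ is a weak-infimal set, hence lies in $\mathcal{P}_p(Y)^\bullet$) to conclude $\wsup(\winf({\rm VP}^L)-K)=\winf({\rm VP}^L)$. The main obstacle I anticipate is the bookkeeping of the degenerate cases $(F+I_A)^\ast(L)\in\{\{+\infty_Y\},\{-\infty_Y\}\}$, where Lemma \ref{pro_6hh}(iii) does not apply verbatim; there I would fall back on the arithmetic conventions in \eqref{2.4} and the trichotomy of Proposition \ref{pro_nwwew}, verifying that the claimed $\preccurlyeq_K$-relations degenerate consistently (and that the feasibility hypothesis $A\cap\dom F\neq\emptyset$ rules out the vacuous case $\winf({\rm VP}^L)=\{+\infty_Y\}$).
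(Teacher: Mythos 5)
Your proposal is correct, and for the first two links it coincides with the paper's argument: both route through Lemma \ref{lem:6.2c}, the inclusion chain $\A_3\subset\A_2\subset\A_1$ of Proposition \ref{pro:9}, and the monotonicity of $\wsup$ from Proposition \ref{prop_4gg}(iii). Where you genuinely diverge is on the last link $\wsup({\rm VD}_1^L)\preccurlyeq_K\winf({\rm VP}^L)$: the paper simply cites an external result (\cite[Theorem 5]{DL2017}, specialized to a singleton uncertainty set), whereas you extend the monotonicity argument one step further via $\A_1\subset\epi(F+I_A)^\ast$ and prove the identity $\wsup\{-y:(L,y)\in\epi(F+I_A)^\ast\}=\winf({\rm VP}^L)$ directly from $(F+I_A)^\ast(L)=-\winf({\rm VP}^L)$, Proposition \ref{pro_decomp}(vii), and Lemma \ref{pro_6hh}(iii). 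This is a legitimate and arguably preferable route: it makes the weak duality proof self-contained, and in fact the identity you establish is exactly the computation the paper performs later as \eqref{eqthm61a} in Step 2 of the proof of Theorem \ref{thm:6.2}, so your argument uses only machinery already present in the paper. Your handling of the degenerate cases is also sound: the hypothesis $A\cap\dom F\neq\emptyset$ rules out $\winf({\rm VP}^L)=\{+\infty_Y\}$, and when $\winf({\rm VP}^L)=\{-\infty_Y\}$ all slices are empty and every relation degenerates to $\{-\infty_Y\}\preccurlyeq_K\{-\infty_Y\}$. The only cosmetic caveat is that the paper never formally defines $\wsup\emptyset$, so in that degenerate branch you should say explicitly that Definition \ref{def1}(b) forces $\wsup\emptyset=\{-\infty_Y\}$; this is a gap the paper's own Lemma \ref{lem:6.2c} shares, not a defect of your argument.
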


\begin{proof} 
Recall that  $({\rm VD}_{1})$ is nothing else but  the problems $({\rm DVP})$ in \cite{DL2017}, hence,
{the inequality $ \wsup ({\rm VD}_1^L)  \preccurlyeq_K  \winf ({\rm VP}^L)$ } was established in  \cite[Theorem 5]{DL2017} (specialize for the case when the uncertain set is a singleton). 

{Now, by Lemma  \ref{lem:6.2c}  one has $\wsup({\rm VD}_{i}^L)=\wsup \{-y: (L,y)\in \A_i\}$ for each $i = 1, 2, 3$.  
Moreover, by Proposition \ref{pro:9}, 
$
\A_3 \subset  \A_2  \subset  \A_1$, and the conclusion follows as   $\wsup M \preccurlyeq_K\wsup N$ if   $M,N\subset Y^\bullet$ and $M\subset N$  (see Proposition \ref{prop_4gg}(iii)). }
 \end{proof}

{The main result of this section  on the stable strong duality for (VP) is given in the next theorem.  }

\begin{theorem}[Principle for stable strong duality  of  (VP)]
\label{thm:6.2}
%Let $\emptyset \ne \V\subset \L(X,Y)$.
For $i=1,2,3$, consider the  following statements:  

\noindent $({\bf a}_i)$\; {$\epi  (F+I_A)^\ast=\A_i,$}

%\noindent  $({\bf b}_i)$\; \red{$ \epi  (F+I_A)^\ast=\B_i, $}

\noindent  $({\bf c}_i)$\; {The  stable strong duality holds for the pair  $({\rm VP})-({\rm VD}_{i})$.}

%\noindent  $({\bf f}_i)$\; \red{ The  stable strong duality holds for the pair  $({\rm CVP})-({\rm CVD}_{i})$,}

\noindent Then, {$[({\bf a}_i)   \Leftrightarrow ({\bf c}_i)]$  for all $i = 1, 2, 3$.}
\end{theorem}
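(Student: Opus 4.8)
The plan is to fix $L\in\L(X,Y)$ and reduce the strong--duality equality $\winf({\rm VP}^L)=\wmax({\rm VD}_i^L)$ to an identity of ``fibres'' over $L$, then exploit the partition--style structure of the weak suprema. I treat $i=3$; the cases $i=1,2$ are identical with a shorter list of dual variables. The two computational cornerstones are, first, the identity
\begin{equation*}
\winf({\rm VP}^L)=-(F+I_A)^*(L)=\wsup\{-y:(L,y)\in\epi(F+I_A)^*\},
\end{equation*}
which follows from $\winf M=-\wsup(-M)$ (Remark \ref{rem_1hh}) together with Proposition \ref{pro_decomp}(vii) and Lemma \ref{pro_6hh}(iii); and second, Lemma \ref{lem:6.2c}, giving $\wsup({\rm VD}_i^L)=\wsup\{-y:(L,y)\in\A_i\}$. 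Writing $Q^L:=\{y:(L,y)\in\epi(F+I_A)^*\}=(F+I_A)^*(L)+K$ and $P^L:=\{y:(L,y)\in\A_i\}$, both fibres are upward $K$--closed (for $P^L$ use \eqref{eq:42newe}, which exhibits it as a union of sets $U+K$) and, by Proposition \ref{pro:9}, $P^L\subset Q^L$. I also set $\mathcal H:=\bigcup_{\rm params}\big(-[F^*(L+L')\uplus I_C^*(L'')\uplus(T\circ G)^*(-L'-L'')]\big)$, the set of attainable dual values, so that $\wsup({\rm VD}_i^L)=\wsup\mathcal H$, $\wmax({\rm VD}_i^L)=\mathcal H\cap\wsup\mathcal H$, and $-\mathcal H\subset P^L$.

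For $({\bf a}_i)\Rightarrow({\bf c}_i)$ I assume $\A_i=\epi(F+I_A)^*$, i.e. $P^L=Q^L$ for every $L$; the cornerstone identities then give zero duality gap $\wsup({\rm VD}_i^L)=\winf({\rm VP}^L)$. It remains to prove attainment, $\wsup\mathcal H\subset\mathcal H$. Fixing $\bar y\in\wsup({\rm VD}_i^L)=\winf({\rm VP}^L)$, one has $-\bar y\in(F+I_A)^*(L)\subset Q^L=P^L$, so $\bar y\in-P^L=\mathcal H-K$, say $\bar y=h-k$ with $h\in -U$ for one partition--style objective set $U\in\P_p(Y)$ and $k\in K$. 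By Lemma \ref{pro_6hh}(i), $\bar y\in(-U)\cup(-U-\inte K)$; the second alternative would put $\bar y\in\mathcal H-\inte K$, contradicting $\bar y\in\wsup\mathcal H$ through the disjoint decomposition of Proposition \ref{pro_decomp}(iv). Hence $\bar y\in -U\subset\mathcal H$, so $\wmax({\rm VD}_i^L)=\wsup({\rm VD}_i^L)=\winf({\rm VP}^L)$, which is $({\bf c}_i)$.

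For $({\bf c}_i)\Rightarrow({\bf a}_i)$ I start from $\winf({\rm VP}^L)=\wmax({\rm VD}_i^L)$; since $\wmax({\rm VD}_i^L)\subset\wsup({\rm VD}_i^L)$, Proposition \ref{pro_5hh}(i) forces $\wsup({\rm VD}_i^L)=\wmax({\rm VD}_i^L)=\winf({\rm VP}^L)$, so both zero gap and attainment hold for every $L$. Zero gap rewrites via the cornerstone identities as $\winf P^L=\winf Q^L$, and the $\winf$--analogue of Proposition \ref{pro_decomp}(iii) (Remark \ref{rem_1hh}) gives $P^L+\inte K=Q^L+\inte K$. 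As $P^L\subset Q^L$, I only need the reverse inclusion: take $q\in Q^L$. If $q\in Q^L+\inte K=P^L+\inte K\subset P^L$ we are done; otherwise the $\winf$--form of the decomposition in Proposition \ref{pro_decomp}(iv) places $q\in\wmin Q^L\subset\winf Q^L$, whence $-q\in\wsup({\rm VD}_i^L)\subset\mathcal H$ by attainment, and so $q\in-\mathcal H\subset P^L$. Thus $P^L=Q^L$ for all $L$, i.e. $\A_i=\epi(F+I_A)^*$.

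The routine parts are the two bridging identities and the upward $K$--closedness of the fibres. The crux — the only step going beyond formal manipulation — is attainment, i.e. that the weak supremum of the dual is actually achieved. In both directions this rests on each dual objective value $-[F^*(\cdot)\uplus I_C^*(\cdot)\uplus(T\circ G)^*(\cdot)]$ being a $(Y,K)$--partition style set, so that a point strictly below such a frontier (in $-U-\inte K$) cannot also lie in the weak supremum; the disjointness in Proposition \ref{pro_decomp}(iv) converts this into the needed membership. I expect the degenerate cases $\winf({\rm VP}^L)\in\{\{+\infty_Y\},\{-\infty_Y\}\}$ to be dispatched separately and trivially, the standing hypothesis $A\cap\dom F\neq\emptyset$ keeping the relevant sets inside $\P_p(Y)$.
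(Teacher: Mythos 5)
Your proposal is correct and follows essentially the same route as the paper's proof: both reduce the statement fibre-wise over $L$ via Lemma \ref{lem:6.2c} and the identity $\winf({\rm VP}^L)=\wsup\{-y:(L,y)\in\epi(F+I_A)^\ast\}$, and both obtain attainment from the fact that each dual objective value lies in $\mathcal{P}_p(Y)$ together with the disjoint decomposition of Proposition \ref{pro_decomp}(iv). The only cosmetic difference is that you extract the dual parameters directly from the fibre description of $\A_i$ (i.e., from \eqref{eq:42newe}) where the paper routes this step through the stable Farkas lemma (Theorem \ref{thm:1}), which encodes the same equivalence.
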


\begin{proof}  We prove   $[({\bf a}_3)\!   \Leftrightarrow \!({\bf c}_3)] $ only. 
The proofs for the case $i = 1, 2$  are similar.
Take  $L\in \L(X,Y)$. For each $L',L''\in\L(X,Y)$ and $T\in \L_+(S,K)$,  let 
\begin{eqnarray*}
\mathcal{K}_L(L',L'',T)&:=& -F^*(L+L')\uplus I_C^\ast(L'')\uplus (T\circ G)^*(-L'-L'')], \\
N_L&:=& \bigcup_{\substack{L',L''\in \L(X,Y)\\T\in \L_+(S,K) }} \mathcal{K}_L(L',L'',T). 
\end{eqnarray*}
One then  has 
\begin{equation} \label{eq:abxx}     
\begin{array}{lll}
%\winf{({\rm VP}^L)}&=& \ \ \winf M_L    \nonumber     \\
{\wsup ({\rm VD}_{3}^L)} &=& \mathop{\wsup}\limits_{\substack{L',L''\in\L(X,Y)\\T\in \L_+(S,K)}}  \mathcal{K}_L(L',L'',T)=\wsup N_L\\
{\wmax ({\rm VD}_{3}^L)}&=& \mathop{\wmax}\limits_{\substack{L',L''\in\L(X,Y)\\T\in \L_+(S,K)}}  \mathcal{K}_L(L',L'',T)=\wmax N_L. 
\end{array}
\end{equation}
%\orange{\bf Observe also that by definition of $\wmax$ of a set, we have: }
%\begin{equation} \label{proofthm610} 
% N_L \setminus (N_L - \inte K) = \wmax N_L.  
 %\end{equation} 

 $[({\bf a}_3)   \Rightarrow{({\bf c}_3)}]$  {  Assume that $({\bf a}_3)$ holds. Take  $L\in \L(X,Y)$, and we will show that }
\begin{equation}\label{eq_26f}
{\winf ({\rm VP}^L)=\wmax ({\rm VD}_{3}^L).}
\end{equation}

 $\bullet$ {\it Step 1.} As $({\rm VP})$ is feasible,   ${\winf ({\rm VP}^L)}\ne\{+\infty_Y\}$. 
If  ${\winf ({\rm VP}^L)}=\{-\infty_Y\}$  then, by Proposition \ref{thm_WD},   ${\wsup ({\rm VD}_{3}^L)}=\{-\infty_Y\}$, and so,  
$\mathcal{K}_L(L',L'',T)=\{-\infty_Y\}$ for all $L',L''\in\L(X,Y)$ and $T\in \L_+(S,K)$. Consequently,  
${\wmax ({\rm VD}_{3}^L)=\{-\infty_Y\}=\winf ({\rm VP}^L)}, $
and \eqref{eq_26f} holds.

 $\bullet$ {\it Step 2.}
Assume from now that $\winf ({\rm VP}^L) \subset Y$.
As  $\winf ({\rm VP}^L)\in \P_p(Y)$, one has 
\begin{align}\label{eqthm61a}
\winf ({\rm VP}^L)&=\wsup[\winf ({\rm VP}^L)] \qquad\qquad\; \textrm{(see {Lemma} \ref{pro_6hh}(iii))}\nonumber   \\
&=\wsup[\winf \{F(x)-L(x): x\in A\}]\quad   \nonumber   \\
&=\wsup[-(F+I_A)^\ast(L)]\\
&=\wsup[-(F+I_A)^\ast(L)-K] \quad \textrm{(see Proposition \ref{pro_decomp}(vii))}   \nonumber \\
&=\wsup \{-y: (L,y)\in \epi (F+I_A)^\ast\}.  \nonumber 
\end{align}
On the other hand, as  $({\bf a}_3)$ holds,  $\epi  (F+I_A)^\ast=\A_3$, and by Lemma   \ref{lem:6.2c},  $\wsup({\rm VD}_{3}^L)=\wsup \{-y: (L,y)\in \A_3\}$, which,   together with  \eqref{eqthm61a}  and \eqref{eq:abxx},  yields   
\begin{equation}\label{eq:34new}
\winf ({\rm VP}^L)=\wsup({\rm VD}_{3}^L)=\wsup N_L.
\end{equation}
\indent$\bullet$ {\it Step 3.} We now prove  that $\winf  ({\rm VP}^L)\subset N_L$.
Take  $y\in \winf  ({\rm VP}^L)=\winf [(F-L)(X)]$.
It then follows from  Proposition \ref{pro_decomp}(iv)     (see also,   Remark \ref{rem_1hh})   
that $y\notin (F-L)(X)+\inte K$, which is 
 equivalent to  
$$ F(x)-L(x)-y\notin -\inte K, \ \ \forall   x\in A.$$ 
This  is nothing else but  $(\alpha)$ in Section \ref{sec:5} with $y$ being replaced by $-y$. As $({\bf a}_3)$ holds, it follows from   the  stable  vector Farkas lemma (Theorem \ref{thm:1} with $i = 3$) that there are 
$ \bar L',\bar L'' \in \L(X,Y)$ and  $\bar T\in \L_+(S,K)$ such that  
\begin{equation}\label{eq:35new}
F^\ast(L+L')\uplus I_C^\ast(L'')\uplus (T\circ G)^\ast(-L'-L'')\subset Y
\end{equation}
 and $-y\notin F^\ast(L+\bar L')+I_C^\ast(\bar L'')+ (\bar T\circ G)^\ast(-\bar L'-\bar L'')  -\inte K$,  which means that 
\begin{equation}\label{eq:36new}
-y\notin -\mathcal{K}_L(\bar L',\bar L'',\bar T)  -\inte K. 
\end{equation}
On the other hand, as   $ y\in \winf  ({\rm VP}^L) = \wsup N_L$ (see \eqref{eq:34new}), one has  $ y \not<_K y^\prime $ for all $y^\prime \in N_L$, yielding  
\begin{equation}\label{eq:37new}
 y\notin \mathcal{K}_L(\bar L',\bar L'',\bar T)-\inte K.
\end{equation}
Now, as $\mathcal{K}_L(\bar L',\bar L'',\bar T)\in \P_p(Y)$ (see \eqref{eq:35new}), the 
three sets $\mathcal{K}_L(\bar L',\bar L'',\bar T)-\inte K$, $\mathcal{K}_L(\bar L',\bar L'',\bar T)$, and  $\mathcal{K}_L(\bar L',\bar L'',\bar T)+\inte K$ constitute a decomposition of $Y$, which, together with \eqref{eq:36new} and \eqref
{eq:37new}, yields 
 $y\in \mathcal{K}_L(\bar L',\bar L'',\bar T)$.  This shows that $y\in N_L$, and hence, $\winf M_L\subset N_L$.

$\bullet$ {\it Step 4.} It now follows from Step 3 and Step 4 that  (see also \eqref{eq:abxx})
$$\winf ({\rm VP}^L)=N_L\cap\wsup N_L=\wmax N_L=\wmax ({\rm VD}_{3}^L), $$
which is \eqref{eq_26f}.

\medskip

 $[{({\bf c}_3)}   \Rightarrow ({\bf a}_3)]$ Assume that ${({\bf c}_3)}$ holds, i.e., \eqref{eq_26f} holds for all $L\in {\L(X,Y)}$. {Taking   Proposition \ref{pro:9} into account, to prove $({\bf a}_3)$,  it suffices  to show that
\begin{equation}\label{eq_35iii}
\epi (F+I_A)^*\subset \A_3.
\end{equation}
Take $(L,y)\in \epi (F+I_A)^*$. Then
 \begin{equation*}%\label{eq_35ii}
y\in   \wsup [(L-F)(A)] + K = - \winf (F-L)(A)+K=-{\winf ({\rm VP}^L)}+K.
\end{equation*}}
On the other hand, as $({\bf c}_3)$ holds,   one also has 
$$\winf ({\rm VP}^L)=\wmax ({\rm VD}^L_{3})=\wmax N_L\subset N_L. $$
So,   $y\in -N_L+K$ and thus,   
there are $\bar L',\bar L''\in \L(X,Y)$ and $\bar T\in \L_+(S,K)$ such that 
\begin{align*}
&y\in  -\mathcal{K}_L(\bar L',\bar L'',\bar T) + K\\
\Longleftrightarrow\; &y\in F^*(L+\bar L')\uplus I_C^\ast(\bar L'')\uplus (\bar T\circ G)^*(-\bar L'-\bar L'')] +K\\
\Longleftrightarrow\;& (L,y)\in \A_3\quad \textrm{(see \eqref{eq:azaza})}.
\end{align*}
Consequently,  $(L,y)\in \epi (F+I_A)^*$ then  
$(L, y) \in \A_3  $  and  \eqref{eq_35iii} follows. The proof is complete.
\end{proof}

\begin{corollary}[Stable strong duality I]\label{cor:6.6}
Assume that  $F$ is $K$-convex,  that $G$ is $S$-convex,  and that $C$ is convex. Then, the following statements are holds true:

\begin{itemize}
\item[{\rm (i)}] {If $(C_1)$ holds then the  stable strong duality holds for the pair  $({\rm VP})-({\rm VD}_{1})$.}

\item[{\rm (ii)}] {If $(C_1)$ and $(C_2)$ hold then the  stable strong duality holds for pairs  $({\rm VP})-({\rm VD}_{1})$ and $({\rm VP})-({\rm VD}_{2})$.}

\item[{\rm (iii)}] {If $(C_1)$, $(C_2)$ and $(C_3)$ hold then the stable strong duality holds for three pairs  $({\rm VP})-({\rm VD}_{i})$, $i=1,2,3$.}
\end{itemize}

\end{corollary}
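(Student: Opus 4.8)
The plan is to read this corollary as a direct synthesis of the two central pillars established earlier: the epigraph representations of Theorem~\ref{thm:4.1f} and the duality principle of Theorem~\ref{thm:6.2}. Recall that Theorem~\ref{thm:6.2} establishes, for each $i\in\{1,2,3\}$, the equivalence $[({\bf a}_i)\Leftrightarrow({\bf c}_i)]$, where $({\bf a}_i)$ is the representation $\epi(F+I_A)^\ast=\A_i$ and $({\bf c}_i)$ is precisely the stable strong duality asserted here for the pair $({\rm VP})-({\rm VD}_i)$. Hence it suffices, for each item, to certify that its hypotheses force the relevant representation $({\bf a}_i)$, and the standing assumptions that $F$ is $K$-convex, $G$ is $S$-convex, and $C$ is convex are exactly what Theorem~\ref{thm:4.1f} demands.

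For item (i) I would invoke, under $(C_1)$, part \textbf{(a)} of Theorem~\ref{thm:4.1f} to obtain $\epi(F+I_A)^\ast=\A_1$, that is $({\bf a}_1)$, and then apply Theorem~\ref{thm:6.2} with $i=1$ to conclude $({\bf c}_1)$. For item (ii) the hypothesis already contains $(C_1)$, so item (i) delivers stable strong duality for $({\rm VP})-({\rm VD}_1)$; adding $(C_2)$, part \textbf{(b)} of Theorem~\ref{thm:4.1f} gives $\epi(F+I_A)^\ast=\A_2$, i.e. $({\bf a}_2)$, whence Theorem~\ref{thm:6.2} with $i=2$ supplies $({\bf c}_2)$. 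For item (iii) one repeats the step once more: with all three conditions in force, part \textbf{(c)} of Theorem~\ref{thm:4.1f} yields $({\bf a}_3)$, and Theorem~\ref{thm:6.2} with $i=3$ then gives $({\bf c}_3)$; combined with items (i) and (ii) this establishes stable strong duality for all three pairs $({\rm VP})-({\rm VD}_i)$, $i=1,2,3$.

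Because the regularity conditions are cumulative and each part of Theorem~\ref{thm:4.1f} hands us the matching representation on the nose, there is genuinely no analytic obstacle to overcome here; the entire content has already been absorbed into the two quoted theorems. The only place where a little care is warranted is the bookkeeping for the \emph{multiple} duality claims in items (ii) and (iii). If one prefers a self-contained argument there, one may note via Proposition~\ref{pro:9} that a stronger representation automatically entails the weaker ones: for instance $\epi(F+I_A)^\ast=\A_2$ together with the chain $\A_2\subset\A_1\subset\epi(F+I_A)^\ast$ forces $\A_1=\epi(F+I_A)^\ast$, so that $({\bf a}_2)$ already implies $({\bf a}_1)$, and likewise $({\bf a}_3)$ implies both $({\bf a}_1)$ and $({\bf a}_2)$. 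This monotonicity remark is the sole subtlety; everything else is a mechanical chaining of the two main theorems.
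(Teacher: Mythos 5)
Your proposal is correct and matches the paper's argument exactly: the paper proves this corollary by the same two-step chaining of Theorem~\ref{thm:4.1f} (to obtain $({\bf a}_i)$ under the cumulative regularity conditions) with Theorem~\ref{thm:6.2} (to convert each $({\bf a}_i)$ into $({\bf c}_i)$). Your closing remark on Proposition~\ref{pro:9} is a harmless extra observation; the paper simply relies on the cumulative hypotheses delivering each representation directly.
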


\begin{proof}
It follows from Theorems \ref{thm:6.2} and \ref{thm:4.1f}.%, \ref{thm:3} and \ref{thm:2}.
\end{proof}

 \begin{remark}  
Theorem \ref{thm:6.2} and Corollary \ref{cor:6.6} for the case $i=1$ are stable versions of \cite[Corollaries 1 and 3]{DL2017} {(with the uncertainty set being a singleton). The results for other cases (i.e., $i = 2, 3$),  up to the best knowledge of the authors, are  new.}  Consequently,    Theorem \ref{thm:6.2} and Corollary \ref{cor:6.6} with $i = 2, 3$,    are probably the first version of    Fenchel-Lagrange duality results  for vector problems which  extend
 the same kinds of duality (see     \cite{Bot2010} and \cite{DNV-08}, and references therein)    for scalar  to     vector optimization problems (see  Corollaries 6.2 and 6.3 below). 
\end{remark}

 It is worth observing  that when   $Y=\mathbb{R}$ and $K=\mathbb{R}_+$ the problem (VP) becomes (P), 
%\begin{align*}
%\red{({\rm P})}\quad &\inf \{f(x): x\in C,\; G(x)\in -S\}.
%\end{align*}
while the \emph{dual problems} {$({\rm VD}_{i})$},   $i=1,2,3$,  collapse to the  dual problem ${({\rm D}_1)} $, ${({\rm D}_2)} $, and ${({\rm D}_3)} $
in Section 1 (Introduction). 
%\begin{align*}
%\red{({\rm D}_1)} \quad &\sup_{\lambda\in S^+}  \hskip0.6em\inf_{x\in C} (f+\lambda G)(x), \\
%\red{({\rm D}_2)}\quad & \sup_{\substack{x^\ast \in X^\ast\\\lambda\in S^+}} \hskip0.6em [-f^*(x^\ast)-(i_C+\lambda G)^*(-x^\ast)], \\
%\red{({\rm D}_3)}\quad & \sup_{\substack{x^\ast,u^\ast\in X^\ast\\\lambda\in S^+}}  \!\! [-f^*(x^\ast)-\blue{i^\ast_C}(u^\ast)- (\lambda G)^*(-x^\ast-u^\ast)].
%\end{align*}
{Note that ${({\rm D}_1)}$ and  ${({\rm D}_2)}$ are $(D^{C_L})$ and $(D^{C_{FL}})$, respectively, in \cite{Bot2010} while ${({\rm D}_1)}$ and  ${({\rm D}_3)}$ are  the problems $({\rm LDQ})$ and $({\rm FLDQ})$ proposed in  \cite{DNV-08}  for the case when $Y = \mathbb{R}$ and $K = \mathbb{R}_+$. } {The next two corollaries are stable versions (extensions) of the corresponding results in   \cite{Bot2010}  and in \cite{DNV-08}.  
 They are direct consequences of Theorem \ref{thm:6.2} and Corollary \ref{cor:6.6}, respectively.  }
%For simplicity, we just consider the case when $\mathcal{V}=\L(X,\mathbb{R})=X^\ast$. 

\begin{corollary}\label{cor:6.3d}   Let $f : X \rightarrow \mathbb{R}  \cup \{ + \infty\}$, $A^\prime _i$ be the sets defined in Remark  \ref{rem73}, and $i_C$ be the usual indicator function  of the set $C$. 
%Let $\emptyset\ne\mathcal{V}\subset X^\ast$. 
For each   $i=1,2,3$,   consider the following statements:

\noindent $({\bf a}'_i)$\; {$ \epi  (f+i_A)^\ast= \A'_i$},

\noindent ${({\bf c}'_1)}$\; $\inf(f-x^\ast)(A)=\max_{\lambda\in S^+} \inf (f-x^\ast+\lambda G)(C),$ {$\forall x^\ast\in X^\ast$},  

\noindent ${({\bf c}'_2)}$\; $\inf(f-x^\ast)(A)=\max_{\substack{u^\ast \in X^\ast\\\lambda\in S^+}} [-f^*(u^\ast)-(i_C+\lambda G)^*(x^\ast-u^\ast)],$  {$\forall x^\ast\in X^\ast$},  

\noindent ${({\bf c}'_3)}$\; $\inf(f-x^\ast)(A)=\max_{\substack{u^\ast\!\!,v^\ast\in X^\ast\\\lambda\in S^+}}  \! [-f^*(u^\ast)-{i^\ast_C}(v^\ast)- (\lambda G)^*(x^\ast-u^\ast-v^\ast)],$ {$\forall x^\ast\in X^\ast$}. 
\noindent Then, $[({\bf a}'_i)   \Leftrightarrow {({\bf c}'_i)}]$  {for each    $i = 1, 2, 3$.}
\end{corollary}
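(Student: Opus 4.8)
The plan is to derive this corollary as the scalar specialization ($Y = \RR$, $K = \RR_+$) of Theorem \ref{thm:6.2}, so that the real content is only to check that, under this specialization, the objects $({\bf a}_i)$ and $({\bf c}_i)$ of that theorem become exactly $({\bf a}'_i)$ and $({\bf c}'_i)$. First I would invoke Remark \ref{rem73}: when $Y = \RR$ and $K = \RR_+$, the cone $\L_+(S, K)$ reduces to $S^+$, the WS-sum $\uplus$ becomes ordinary addition, the map $\Psi$ is the identity on $X^*\times\RR$, and each conjugate mapping collapses to the usual scalar conjugate ($F^* \to f^*$, $I_C^* \to i_C^*$, $(T\circ G)^* \to (\lambda G)^*$). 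Consequently $\A_i$ collapses to $\A'_i$, and therefore the vector condition $({\bf a}_i)$ is literally $({\bf a}'_i)$.

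Second, I would unfold the stable strong duality condition $({\bf c}_i)$. Since $\L(X,Y) = X^*$, the perturbing operator $L$ is written $x^*$, and the perturbed primal $({\rm VP}^{x^*})$ has $\winf({\rm VP}^{x^*}) = \inf(f - x^*)(A)$, while $\wmax$ reduces to attainment of the ordinary supremum, i.e.\ a genuine $\max$. It then remains to identify the perturbed dual $({\rm VD}_i^{x^*})$ with the $\max$-expression on the right of $({\bf c}'_i)$. For $i = 1$ this is immediate; for $i = 2, 3$ I would expand $\wsup({\rm VD}_i^{x^*})$ exactly as in Lemma \ref{lem:6.2c}, using $(f - x^*)^*(u^*) = f^*(u^* + x^*)$ together with a shift of the dual variable, which turns the arguments $-L'$ (resp.\ $-L' - L''$) appearing in $({\rm VD}_i)$ into $x^* - u^*$ (resp.\ $x^* - u^* - v^*$). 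This shows that the equality $\winf({\rm VP}^{x^*}) = \wmax({\rm VD}_i^{x^*})$ for all $x^* \in X^*$ is precisely $({\bf c}'_i)$.

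With these two identifications in hand, the equivalence $[({\bf a}'_i) \Leftrightarrow ({\bf c}'_i)]$ for each $i = 1, 2, 3$ follows at once from the corresponding equivalence $[({\bf a}_i) \Leftrightarrow ({\bf c}_i)]$ of Theorem \ref{thm:6.2}. I expect the only delicate point to be the sign and variable bookkeeping in the second paragraph: one must track carefully how the perturbation by $x^*$ shifts the conjugate arguments and confirm that, after the change of dummy variable, the resulting scalar dual coincides exactly with the expression written in $({\bf c}'_i)$. Everything else is a faithful transcription of the vector framework into its $\RR$-valued instance, and no new analytic ingredient beyond Theorem \ref{thm:6.2}, Lemma \ref{lem:6.2c}, and Remark \ref{rem73} is required.
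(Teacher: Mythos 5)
Your proposal is correct and follows essentially the same route as the paper, which derives Corollary \ref{cor:6.3d} directly from Theorem \ref{thm:6.2} via the scalar specialization described in Remark \ref{rem73}; your explicit bookkeeping of the shift $(f-x^*)^*(u')=f^*(u'+x^*)$ and the change of dual variable is exactly the verification the paper leaves implicit.
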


\begin{corollary}\label{cor:6.4c}
Assume that  {$f$ is convex},  that $G$ is $S$-convex,  and that $C$ is convex.
{The following statements  hold:}

\begin{itemize}
\item[{\rm (i)}] If $(C_1')$ holds then ${({\bf c}'_1)}$   holds,
\item[{\rm (ii)}] If $(C_1')$ and $(C_2')$ hold then ${({\bf c}'_1)}$ and  ${({\bf c}'_2)}$  hold,

\item[{\rm (iii)}] If $(C_1')$, $(C_2')$ and $(C_3 )$ hold then  ${({\bf c}'_1)}$, ${({\bf c}'_2)}$, and ${({\bf c}'_3)}$ hold,
\end{itemize}
{where ${({\bf c}'_1)}$, ${({\bf c}'_2)}$, and ${({\bf c}'_3)}$ are statements   in Corollary \ref{cor:6.3d},  and $(C_1')$,  $(C_2')$ are regularity conditions introduced  in Subsection 5.2.} \end{corollary}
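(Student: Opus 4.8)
The plan is to obtain this corollary as the scalar specialization ($Y=\mathbb{R}$, $K=\mathbb{R}_+$) of the machinery already in place, combining the representation theorem (Theorem \ref{thm:4.1f}) with the scalar duality characterization (Corollary \ref{cor:6.3d}). The three items are proved the same way, adding one regularity condition at a time, so I would argue item (iii) in full and note that (i) and (ii) are the weaker instances obtained by invoking fewer conditions.

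First I would record the translation of the data into the scalar setting. By Remark \ref{rem_2dd} and Remark \ref{rem73}, when $Y=\mathbb{R}$ and $K=\mathbb{R}_+$ the cone $\L_+(S,K)$ reduces to $S^+$, the $K$-extended epigraphs collapse to ordinary epigraphs, the map $\Psi$ becomes the identity, and the sets $\A_i$ become exactly $\A'_i$. Correspondingly the regularity conditions $(C_1)$, $(C_2)$ for the vector triple $(F;G,C)$ become $(C_1')$, $(C_2')$ for $(f;G,C)$, while $(C_3)$ is unchanged, since it constrains only the continuity of $G$ and involves neither $Y$ nor $F$. The hypotheses that $f$ is convex, $G$ is $S$-convex, and $C$ is convex are precisely the scalar instance of the standing assumptions of Theorem \ref{thm:4.1f}.

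Next, under $(C_1')$, $(C_2')$, $(C_3)$ I would apply Theorem \ref{thm:4.1f}(c) in this scalar instance to get $\epi(f+i_A)^\ast=\A'_3$, which is exactly statement $(\mathbf{a}'_3)$. Corollary \ref{cor:6.3d} then gives $[(\mathbf{a}'_3)\Leftrightarrow(\mathbf{c}'_3)]$, so $(\mathbf{c}'_3)$ holds. Dropping to Theorem \ref{thm:4.1f}(b) under $(C_1')$, $(C_2')$ yields $(\mathbf{a}'_2)$ and hence $(\mathbf{c}'_2)$, and Theorem \ref{thm:4.1f}(a) under $(C_1')$ alone yields $(\mathbf{a}'_1)$ and hence $(\mathbf{c}'_1)$. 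Since the hypotheses of (iii) contain those of (ii), which in turn contain that of (i), all the asserted implications follow simultaneously.

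I do not expect a genuine obstacle, as this is a specialization rather than a new result; the only point requiring care is the bookkeeping of the previous paragraph, namely verifying that each vector object degenerates to the claimed scalar object and that $(C_3)$ survives verbatim. If a shorter route is preferred, the conclusion is also immediate by specializing Corollary \ref{cor:6.6} directly, after observing that in the scalar case the stable strong duality of each pair $({\rm VP})$-$({\rm VD}_i)$ is literally the family of equalities $(\mathbf{c}'_i)$: the perturbation $L\in\L(X,Y)$ becomes $x^\ast\in X^\ast$, $\winf$ becomes $\inf$, and $\wmax$ becomes $\max$.
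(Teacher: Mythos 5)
Your proposal is correct and matches the paper's own (one-line) justification: the paper derives this corollary as a direct consequence of Corollary \ref{cor:6.6}, and your main route via the scalar instance of Theorem \ref{thm:4.1f} combined with Corollary \ref{cor:6.3d} is exactly that same specialization unpacked (since Corollary \ref{cor:6.6} is itself Theorem \ref{thm:4.1f} plus Theorem \ref{thm:6.2}, whose scalar form is Corollary \ref{cor:6.3d}). Your bookkeeping of the scalar degenerations, including the observation that $(C_3)$ survives verbatim, is accurate.
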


\begin{remark}
\label{rem:6.1e}    
 (a)  The condition $(C'_1)$ and $(C'_2)$ are nothing else but the condition $(RC_1^L)$ and $(RC_1^{FL})$ in \cite{Bot2010}, and hence, Corollary \ref{cor:6.4c} (i)-(ii) extends and can be considered as ``stable strong duality version" of  the strong duality     given in \cite[Theorems 3.4 and 3.6]{Bot2010}.

(b)  
Recall that when $C$ is a closed convex subset of $X$, $f$ is a proper convex and {lsc  function,  } and $G$ is a proper $S$-convex and positively $S$-lsc mapping, $({\bf a}'_3)$ is equivalent to  $({\bf CC})$ in \cite{DNV-08,DVN-08}, and that $({\bf a}'_1)$ and $({\bf a}'_2)$ hold whenever $({\bf a}'_1)$ holds (see Remark \ref{rem:5.2d}).
 So, Corollary \ref{cor:6.3d} covers  \cite[Corollaries 4.5, 4.6, 4.7]{DNV-08} and \cite[Corollaries 6.4, 6.5]{DVN-08} (non-stable version) and \cite[Theorem 6.2, 6.3]{DVN-08} (stable version).

{(c) 
Under the convex and closedness assumptions  as in (b),} $({\bf a}'_1)$ is equivalent to the fact that  $\bigcup_{z^*\in S^+}\epi(f+i_C+z^*\circ G)^*$ is weak*-closed, and hence, Corollary \ref{cor:5.1}, together with Corollary \ref{cor:6.3d}, for the case $i=1$ returns to \cite[Corollary 5]{DVV-14}.

(d) Lastly, {it is worth emphasizing that (to the best knowledge of the authors)  no  results in duality for vector problems existed in the literature could cover the ones   in } $({\bf c}'_2)$, $({\bf c}'_3)$ (when turning back to the case  $Y = \mathbb{R}$).      
\end{remark}
\appendix
%\centerline{ \bf Appendices}

%\bigskip 

\section{  {Proof} of the Basic Lemma 1 (Lemma \ref{exepi})} 

%\medskip 

Let us set
\begin{align}\label{eq:19a}
	\Delta&:=\bigcup_{\substack{x\in \dom F_1\\x'\in \dom F_2}}
		[ M+ \bar L(x)-F_1(x)-F_2(x')-K] \times \{x-x'\}.
\end{align}

$\bullet$ {\it Step 1.}  {\it  We firstly prove  that  there is $\widehat y\in Y$ such that $ ( \widehat y-\inte K)\!\times\! \{0_X\} \subset \inte \Delta$.} 
Pick $\bar k\in \inte K$, it is obvious that   $F_2(\widehat x) +\bar k - K\in \mathcal{N}_Y(F_2(\widehat x))$. So, by $(C_0)$, there is  $U\in \mathcal{N}_X(0_X)$ such that $F_2(\widehat x+ U) \subset F_2(\widehat x)+\bar k- K$ which leads to 
\begin{equation*}
F_2(\widehat x)+\bar k\in F_2(\widehat x+ u)+K, \quad\forall u\in U.
\end{equation*}
 It results that if we take $\widehat y:= m+ \bar L(\widehat x)-F_1(\widehat x)-F_2(\widehat x)-\bar k$ for some $m\in M$ then it holds: 
\begin{align*}
&\widehat y\in m+\bar L(\widehat x)-F_1(\widehat x)- F_2 (\widehat x+u) - K,\quad  \forall u\in U\\
\Longrightarrow\; & \widehat y-k\in m+\bar  L(\widehat x)-F_1(\widehat x)- F_2 (\widehat x+u) - K,\quad  \forall k\in \inte K,\; \forall u\in U \\
\Longrightarrow\; & (\widehat y-k,-u)\in \Delta,\quad  \forall k\in \inte K,\; \forall u\in U \qquad \textrm{(by \eqref{eq:19a})}\\
\Longrightarrow\;&(\widehat y-\inte K)\!\times\! (-U)\subset \Delta\\
\Longrightarrow\;& (\widehat y-\inte K)\!\times\! \{0_X\}  \subset \inte \Delta.
\end{align*}

$\bullet$ {\it Step 2.} {\it We prove that $(\bar y,0_X)\not\in \inte \Delta$. }  Indeed, if we assume on the contrary, then there exists   $V\in \mathcal{N}_Y(0_{Y})$ such that  $(\bar y+V)\!\times\! \{0_X\} \subset\Delta$.  Take $\bar{k}\in V\cap \inte K$,  one gets $(\bar y+\bar{k},0_X)\in \Delta$. 
This leads to exist $\bar x\in \dom F_1$ and $\bar x'   \in \dom F_2$  such that $\bar y+\bar{k}\in {\bar L}(\bar{x})-F_1(\bar x)-F_2(\bar x')-K$ and $\bar x=\bar x'$, which contradicts the assumption  $\bar y\in Y\setminus [M+(\bar L-F_1-F_2)(X)-\inte K]$.

$\bullet$ {\it Step 3.} {\it Applying  of the  convex separation theorem.} 
By  the convexity of $M$, $F_1$ and $F_2$, it is easy to check that $\Delta$ is a convex subset of $Y\times X$.  Moreover, it follows from Steps 1 and 2 that $\inte \Delta\ne \emptyset$ and $(\bar y,0_X)\not\in \inte \Delta$. Now,    the convex separation theorem (\cite[Theorem 3.4]{Rudin91}) applying  to the convex sets  $\{(\bar y,0_X)\}$  and  $\Delta$ yields the existence of    a nonzero functional $(y^\ast_0,u^\ast_0)\in Y^{*}\!\times\! X^\ast$ satisfying
\begin{equation}  \label{eq_336d}
	y^\ast_0(\bar y) < y^\ast_0(y) +u^\ast_0(u),\quad \forall (y,u)\in \inte \Delta,
\end{equation}
and consequently,  
\begin{equation}  \label{eq_337d}
	y^*_0(\bar y) \le y^*_0(y) + u^\ast_0(u),\quad \forall (y,u)\in \Delta.
\end{equation}
Next, we  show that
\begin{equation}  \label{eq_338d}
y^*_0(k^\prime)<0, \quad \forall k^\prime\in \inte K.
\end{equation}
Take $k^\prime  \in\inte K$.  According to Step 1, there is
$\widehat y \in Y$  such that $ ( \widehat y-\inte K)\!\times\! \{0_X\}  \subset \inte \Delta$. On the other hand,  by %Lemma \ref{pro_1a},
\cite[Lemma 2.1(i)]{CDLP20},  there is  $\mu>0$ such that  $\bar y-\mu k^\prime \in \widehat y-\inte K$. Hence,  
 $(\bar y-\mu k^\prime ,0_X)\in  \inte\Delta$, which, together with  \eqref{eq_336d},  leads to  $y^*_0(\bar y)< y^*_0(\bar y-\mu k^\prime)$, or $y^*_0(k^\prime )<0 $, and    \eqref{eq_338d} holds. 

$\bullet$   {\it Step 4.} {\it Define $L_1, L_2$ and verify \eqref{eq:15neww}.}  
Take $k_0\in \inte K$ such that $y^\ast_0(k_0)=-1$ (it is possible by \eqref{eq_338d}). We  
define 
$$ L_2(u)=u_0^\ast(u) k_0,\; \forall u\in X \quad \textrm{ and }\quad  L_1= \bar L- L_2.$$ 
It is easy to see that $ L_1, L_2\in \L(X,Y)$, $L_1+L_2=\bar L$, and  $y_0^{\ast }\circ  L_2=-u^\ast_0$.
Thus,  \eqref{eq_337d} can be rewritten as
$y^*_0(\bar y) \le y^*_0(y - L_2(u))$ for all $(y, u)\in \Delta,$
or equivalently, $y^{\ast }_0( y- L_2(u) -\bar y) \geq 0$ for all $ (y,u)\in \Delta.$
So, by   \eqref{eq_338d},   $y- L_2(u) -\bar y \not\in  \inte K $, yielding  
\begin{equation}  \label{abcd22}
	\bar y\notin y- L_2(u)-\inte K,\quad \forall (y,u)\in \Delta.
\end{equation}

Now, as   $( m+\bar L(x) - F_1(x)- F_2(x'), x-x') \in \Delta $ for all $m\in M$, $x\in \dom F_1$, and $x'\in \dom F_2$, it follows from \eqref{abcd22} that 
\begin{equation*}
%\label{eq:30e}
	\bar y\notin M+ \bar L(x)\!-\! F_1(x)\!-\!F_2(x') \!-\!  L_2(x\!-\!x') \!-\!\inte K, \; \forall (m,x,x')\in M \times \dom F_1 \times \dom F_2, 
\end{equation*}
which is  \eqref{eq:15neww}  and the lemma has been proved. \qed

\section{ {Proof of the Lemma  \ref{pro:3.3_nwww}} }

For the proof of (i),  take $(L_i, U_i)\in \exepi F_i^\ast$, $i=1,2$,  and show  
 that $(L_1+ L_2, U_1\uplus U_2)\in \exepi (F_1+F_2)^\ast$, or equivalently,
\begin{equation}
\label{eq:14e}
(F_1+F_2)^\ast(L_1+ L_2)\preccurlyeq_K U_1\uplus U_2.
\end{equation}
On the one hand, for each $i=1,2$, as $( L_i, U_i)\in \exepi F_i^\ast$, it holds $F_i^\ast( L_i)\preccurlyeq_K U_i$, and hence,
$F^\ast_1( L_1)\uplus F^\ast_2(L_2) \preccurlyeq_K  U_1\uplus  U_2$ (see Proposition  \ref{prop_1ab}(v)). 
On the other hand, we have
$(F_1+F_2)^\ast( L_1+ L_2)=\wsup [( L_1+ L_2-F_1-F_2)(X)]$ and  
\begin{align*}
F^\ast_1( L_1)\uplus F^\ast_2( L_2)&=\wsup [F^\ast_1( L_1)+ F^\ast_2( L_2)]\\
&=\wsup[\wsup [( L_1-F_1)(X)]
+ \wsup [( L_2-F_2)(X)]]\\
&=\wsup[( L_1-F_1)(X)+ ( L_2-F_2)(X)]
\end{align*}
(the last equality follows from Proposition \ref{pro_decomp}(vi)).
It is easy to see that $( L_1+L_2-F_1-F_2)(X)\subset (L_1-F_1)(X)+ ( L_2-F_2)(X)$, and consequently, by Proposition \ref{prop_4gg}(iii), one gets 
$(F_1+F_2)^\ast( L_1+ L_2)\preccurlyeq_K F^\ast_1( L_1)\uplus F^\ast_2( L_2)$
 and \eqref{eq:14e} follows from transitive property of $\preccurlyeq_K$.

The assertion (ii) follows from (i) and Proposition \ref{rel_epi}. Concretely,  one has (see \eqref{eqbbb}) $\Psi(\exepi F_1^\ast \boxplus \exepi F_2^\ast)\subset\Psi( \exepi (F_1+F_2)^\ast)=\epi(F_1+F_2)^\ast$.  Lastly,  
(iii) follows  from (i), taking  \eqref{eqbbb} and \eqref{eqccc}  into account. \qed

%\medskip 

\section{ {Proof of the Basic Lemma 2 (Lemma  \ref{thm:3.1aa})} }

%\medskip 

$\bullet$ {\it Proof of {\rm (i)}.}  By Lemma \ref {pro:3.3_nwww}(ii), it  suffices  to show that  $\epi (F_1+F_2)^\ast\subset \Psi(\exepi F_1^\ast \boxplus \exepi F_2^\ast)$. Take $(\bar L,\bar y)\in \epi (F+I_A)^*$. Then,  by \eqref{epiF-star},
\begin{equation*}
%\label{eq:22c}
\bar y\notin (\bar L - F_1-F_2)(X)-\inte K.
\end{equation*} 
Apply now the Basic lemma 1 (Lemma \ref{def_exepi}) to the case where $M=\{0_Y\}$, there exist $L_1,L_2\in \L(X,Y)$ such that $L_1+L_2=L$ and 
\begin{equation} \label{eqaaa}
\bar y\notin (L_1-F_1)(X) +(L_2-F_2)(X)-\inte K.
\end{equation}
This, together with Proposition \ref{pro_decomp}(i), yields 
\begin{equation}\label{eq:C2new}
\wsup \Big[(L_1-F_1)( \dom F_1)+(L_2-F_2)(\dom F_2)\Big]\ne\{+\infty_Y\}.
\end{equation}
 And then, according to (iii), and (vi) of  Proposition \ref{pro_decomp},
\begin{equation*}%\label{eq:19}
\begin{aligned}
&(L_1-F_1)(X) +(L_2-F_2)(X)-\inte K\\ 
&= \wsup \Big[(L_1-F_1)( \dom F_1)+(L_2-F_2)(\dom F_2)\Big]-\inte K\\
&= \wsup \Big[\wsup( L_1\!-\!F_1)(\dom F)+\wsup ( L_2 -F_2)(\dom F_2)\Big]-\inte K\\
&= \wsup \Big[F_1^\ast( L_1)+ F_2^\ast( L_2)\Big]-\inte K  
= F_1^\ast( L_1) \uplus F_2^\ast(L_2)-\inte K.
\end{aligned}
\end{equation*}
Combine this to \eqref{eqaaa}, we obtain
% \begin{equation*}
$  \bar y\notin    F_1^\ast( L_1) \uplus F_2^\ast(L_2)-\inte K.   $
 %   \end{equation*} 

As a WS-sum  belongs to $\P_p(Y)^\infty$, \eqref{eq:C2new} leads to the fact that
$F_1^\ast( L_1) \uplus  F_2^\ast( L_2)\in \P_p(Y)$. Consequently,  according to Lemma \ref{pro_6hh}(i),  one gets $\bar y \in F_1^\ast(L_1) \uplus F_2^\ast(L_2)+K$. Hence, by Proposition \ref{prop_1ab}(vi), there exists $\bar V\in \P_p(Y)$ such that $\bar y \in F_1^\ast( L_1) \uplus \bar V$ and 
$ F_2^\ast( L_2)\preccurlyeq_K \bar V$. So, by taking  $\bar U=F_1^\ast( L_1)\uplus \bar V\in \P_p(Y)$, one gets $\bar y\in \bar U$ and \
\begin{align*}
(\bar L,\bar U)&=(L_1 + L_2,F_1^\ast( L_1)  \uplus \bar V)=( L_1,F_1^\ast(\bar L_1))\boxplus ( L_2,\bar V)\in \exepi F_1^\ast \boxplus \exepi F_2^\ast,
\end{align*}
and consequently, $(\bar L,\bar y)\in \Psi(\exepi F_1^\ast \boxplus \exepi F_2^\ast)$  and  {\rm (i)} has been proved.  
\bigskip

$\bullet$ {\it Proof of {\rm (ii)}.} Due to  Lemma \ref {pro:3.3_nwww}(iii),  it suffices to show that 
\begin{equation} \label{eqextraa}
\Psi \left(\exepi (F_1+F_2)^\ast  \nplus \exepi F_3^\ast\right)    \ \ \subset \ \  \Psi \left(\exepi F_1^\ast \boxplus \exepi F_2^\ast  \nplus \exepi F_3^\ast\right).   
\end{equation} 

$(\alpha)$ Take $(\bar L,\bar y)\in\Psi\left(\exepi (F_1+F_2)^\ast\boxplus \exepi F_3^\ast\right)$. Then, 
there are
 $(\tilde L,\tilde U)\in \exepi (F_1+F_2)^\ast$ and $(L_3,U_3)\in \exepi F_3^\ast$  
 such that 
\begin{equation}
\label{eq:15e}
 \tilde L+L_3= \bar L \quad\textrm{ and  }\quad\bar y\in \tilde U\uplus U_3.
\end{equation}
As $(\tilde L,\tilde  U)\in  \exepi (F_1+F_2)^\ast$ and $(L_3,U_3)\in \exepi F_3^\ast$, one gets    $(F_1+F_2)^\ast(\tilde L)\preccurlyeq_K\tilde U$ and  $F_3^\ast( L_3)\preccurlyeq_K U_3$, which, together with    Proposition \ref{prop_1ab}(v), yields 
\begin{equation}\label{eq:24bbis}
 (F_1+F_2)^\ast(\tilde L)\uplus F_3^\ast( L_3)  \preccurlyeq_K\tilde U\uplus U_3. 
\end{equation}

$(\beta)$ {Now as  $\bar y\in \tilde U\uplus U_3$ (see \eqref{eq:15e}), $\tilde U\uplus U_3\ne \{+\infty_Y\}$, we get from \eqref{eq:24bbis} that   $(F_1+F_2)^\ast(\tilde L)\uplus F_3^\ast( L_3)\ne \{+\infty_Y\}$, and hence,  $\tilde U\uplus U_3 \subset Y$ and $ (F_1+F_2)^\ast(\tilde L)\uplus F_3^\ast( L_3)\subset Y$. It now follows from  \eqref{eq:24bbis} and Proposition \ref{prop_4gg}(i) that  
\begin{equation}\label{eq:25nwew}
[(F_1+F_2)^\ast(\tilde L) \uplus F_3^\ast( L_3)-\inte K] \cap [ \tilde U\uplus U_3]=\emptyset.
\end{equation}}

\indent $(\gamma)$ It follows from  \eqref{eq:15e}, \eqref{eq:25nwew} that 
\begin{equation} \label{eqnewa} 
\bar y \not\in  (F_1+F_2)^\ast(\tilde L) \uplus F_3^\ast( L_3)-\inte K.
\end{equation}

On the other hand, from  Proposition \ref{pro_decomp} (vi) and (iii) that (see also \eqref{eq:15e}) 
%This, together with \eqref{eq:15e}, entails
\begin{equation*}
\label{eq:22cbis}
\begin{aligned}
&(F_1+F_2)^\ast(\tilde L)\uplus F_3^\ast( L_3)-\inte K
=\wsup[(F_1+F_2)^\ast(\tilde L)+F_3^\ast( L_3)]-\inte K\\
%&=(F_1+F_2)^\ast(\tilde L)+F_3^\ast( L_3)-\inte K\\
&=\wsup \Big[\wsup(\!\tilde L\!-\!F_1\!-\!F_2\!)(\!\dom F_1\cap \dom F_2\!)+\wsup(\! L_3\!-\!F_3\!)(\!\dom F_3\!)\Big]\!-\!\inte K\\
&=\wsup(\tilde L\!-\!F_1\!-\!F_2)(\dom F_1\cap \dom F_2)+\wsup( L_3\!-\!F_3)(\dom F_3)\!-\!\inte K\\
&=(\tilde L_1-F_1-F_2)(\dom F_1\cap\dom F_2)+(L_3-F_3)(\dom F_3)-\inte K.
\end{aligned}
\end{equation*}
This and    \eqref{eqnewa}  yields 
 \begin{eqnarray} 
 \bar y&\notin& (\tilde L_1-F_1-F_2)(\dom F_1\cap \dom F_2)+(L_3-F_3)(\dom F_3)-\inte K, \ \textrm{or},  \nonumber  \\
\label{eq:C9new}
 \bar y &\notin& (\tilde L_1-F_1-F_2)(X)+(L_3-F_3)(\dom F_3)-\inte K.
\end{eqnarray} 

\indent $(\delta)$ { By  Basic lemma 1 (apply to $M=(L_3-F_3)(\dom F_3)$), there exist $L_1,L_2\in\L(X,Y)$ such that $L_1+L_2=\tilde L$ (note  that, together with \eqref{eq:15e}, $L_1+L_2+L_3=\bar L$) and  
\begin{align*}
\bar y\notin (L_1-F_1)(X) +(L_2-F_2)(X)+(L_3-F_3)(\dom F_3)-\inte K.
\end{align*}}
A similar argument  as in the proof  of 
 \eqref{eq:22cbis} one has 
 \[
 (L_1-F_1)(X) +(L_2-F_2)(X)+(L_3-F_3)(\dom F_3)-\inte K =
F_1^\ast( L_1)\uplus F_2^\ast( L_2)\uplus F_3^\ast( L_3)-\inte K, 
\]
(note that \eqref{eq:C9new} ensures $\wsup[(L_1-F_1)(X) +(L_2-F_2)(X)+(L_3-F_3)(\dom F_3)]\ne \{+\infty_Y\}$)
which yields 
$\bar y\notin     F_1^\ast( L_1)\uplus F_2^\ast( L_2)\uplus F_3^\ast( L_3)-\inte K. $
{By the same argument as in the proof of \eqref{eq:lxab},
%Consequently, according to Lemma \ref{pro_6hh}(i) and Proposition \ref{prop_1ab}(vi), there exist $ U\in \P_p(Y)$ such that $y\in   F^\ast(L')\uplus I_C^\ast(L'')\uplus U $ and $(T\circ G)^\ast(L-L'-L'') \preccurlyeq_K U$ (see the proof of  for more details). So,  
there exists $\bar V\in \P_p(Y)$ such that $y\in \bar V$ and 
$(\bar L,\bar V)%= (L',F^\ast(L')) \boxplus (L'', I^\ast_C(L'')) \boxplus (L-L'-L'',U)
\in  \exepi F_1^\ast\boxplus\exepi F_2^\ast \boxplus \exepi F_3^\ast$
 which  means that $(\bar L,\bar y)\in\Psi\left(\exepi F_1^\ast\boxplus\exepi F_2^\ast \boxplus \exepi F_3^\ast\right)$ and \eqref{eqextraa} has been proved. }

\bigskip

$\bullet$ {\it Proof of {\rm (iii)}.}   { Firstly, note that   from  (ii) one  gets 
\begin{equation}  \label{eqthm31a}
\Psi\left(\exepi F_1^\ast\boxplus \exepi F_2^\ast\boxplus\exepi F_3^\ast \right)=\Psi\left(\exepi (F_1+F_2)^\ast\boxplus \exepi F_3^\ast\right). 
\end{equation}
\indent Assume now that $(C_0')$ holds. Then apply (i) to the two maps  $F_1+F_2$ and $F_3$ (play the roles of $F_1$ and $F_2$, respectively), one gets   $\Psi\left(\exepi (F_1^\ast+ F_2)^\ast\boxplus\exepi F_3^\ast \right)=\epi (F_1+F_2+F_3)^\ast$, which together with \eqref{eqthm31a},  proves  \eqref{eqthm31}.   }

{In the case when $(C_0'')$ holds,  one applies (i) to the mappings $F_3$ and $F_1+F_2$. The equalities in \eqref{eqthm31} then
follow by the similar argument as above. }
  \qed

\vskip-0.5cm

\end{document}